\newtheorem{thm}{Theorem}[section]
\theoremstyle{plain}
\newtheorem{lem}[thm]{Lemma}
\newtheorem{prop}[thm]{Proposition}
\theoremstyle{definition}
\newtheorem{assum}[thm]{Assumption}
\newtheorem{remark}{Remark}[section]
\numberwithin{equation}{section}
\renewcommand{\d}{\/\mathrm{d}\/}
\def\A{\mathcal{A}}
\def\C{\mathrm{C}}
\def\J{\mathcal{J}}
\def\B{B}
\def\R{\mathcal{R}}
\def\N{\mathbb{N}}
\def\no{\nonumber}
\def\U{\mathrm{U}}
\renewcommand{\d}{\/\mathrm{d}\/}
\newcommand{\leqnomode}{\tagsleft@true}
\newcommand{\reqnomode}{\tagsleft@false}
\title[ H$^\infty$ stabilization for sabra Shell Model] %Use the shortened version of the full title
      {Interior and H$^\infty$ feedback stabilization for sabra Shell model of turbulence}
\author[Tania Biswas and Sheetal Dharmatti]
{Tania Biswas and Sheetal Dharmatti}
\subjclass{Primary: 93B36, 93B52, 93C20, 93D15, 35Q30, 76F70.}
 \keywords{Shell models, Feedback control, Navier Stokes Equations, Stabilization, $H^{\infty}$ control, Riccati equation}
 \email{sheetal@iisertvm.ac.in}
 \email{tanibsw91@gmail.com}
\thanks{$^*$ Corresponding author}
\begin{document}
\maketitle

\centerline{\scshape Tania Biswas}
\medskip
{\footnotesize
% please put the address of the first author
 \centerline{Department of Mathematics, IIT Delhi}
   \centerline{IIT Campus, Hauz Khas,}
   \centerline{ New Delhi, Delhi 110016}
} % Do not forget to end the {\footnotesize by the sign }

\medskip

\centerline{\scshape Sheetal Dharmatti$^*$}
\medskip
{\footnotesize
 % please put the address of the second  and third author
 \centerline{School of Mathematics,  IISER Thiruvananthapuram}
   \centerline{Maruthamala P O Vithura,}
   \centerline{ Trivandrum, Kerala, India 695551}
}

\bigskip

% The name of the associate editor will be entered by an editorial staff
% "Communicated by the associate editor name" is not needed for special issue.
 \centerline{(Communicated by the associate editor name)}

\begin{abstract}
Shell models of turbulence are  representation of turbulence equations in Fourier domain. Various shell models are studied for their mathematical relevance and  the  numerical simulations which exhibit at most resemblance with turbulent flows. One of the mathematically well studied  shell model  of turbulence is  called sabra shell model.  This work concerns with two important issues related to shell model namely feedback stabilization and robust stabilization. We  first address stabilization problem related to sabra shell model of turbulence and prove that the system can be  stabilized via finite dimensional controller. Thus only finitely many modes of the shell model would suffice to stabilize the system.  Later we study  robust stabilization in the presence of the unknown disturbance and corresponding control problem by solving an infinite time horizon max-min control problem. We first prove the $H^ \infty$ stabilization of the associated linearized system and characterize the optimal control in terms of a feedback operator by solving an algebraic riccati equation. Using the same riccati operator we establish asymptotic  stability of the nonlinear system.
\end{abstract} 

\section{Introduction}
\setcounter{equation}{0}
Mathematical modelling of turbulence is  complicated, yet various theories and models are proposed in the literature for example  \cite{bohr}, \cite{donough}, \cite{frisch}, \cite{kandanoff}. Turbulent flows show large interactions at local levels/nodes. Hence it is suitable to model them in frequency domain or what is commonly known as Fourier domain. Shell models of turbulence are simplified caricatures of equations of fluid mechanics in wave-vector representation. They exhibit anomalous scaling and local non-linear interactions in wave number space. 
Shell models are well known as they retain certain features of Navier Stokes Equations. The spectral form of Navier Stokes Equations is the motivation  to study shell models. But, unlike spectral model of Navier Stokes Equations, shell models contain local interaction between the modes, that is interaction in the short range which is important in modeling turbulent phenomena. However the governing equations are derived by the necessity that the helicity and energy are conserved as in the case of Navier Stokes Equations.  Several shell models have been proposed in literature; refer to \cite{Peter} for more details.  The most popular and well studied shell model was proposed by Gledzer and was investigated numerically by Yamada and Okhitani, which is referred as the Gledzer- Okhitani -Yamada or GOY model in short \cite{gledzer}, \cite{yamada}. The numerical experiments performed by them showed that the model exhibits an enstrophy cascade and chaotic dynamics. In this work we consider a model known as sabra shell model, introduced in \cite{L'vov et al}.  The Sabra shell model of turbulence shows similar energy cascade as in the GOY model and is also suitable for therotical study of the model.
In \cite{PBT}, Constantin, Levant and Titi have studied this model analytically and have obtained existence  and uniqueness of the strong and weak solutions for the equations in appropriate spaces. In  \cite{PBT 1}, the same authors have  studied the global existence of weak solutions of the inviscid sabra shell model and have shown that these solutions are unique for some short interval of time. Moreover, they give a Beal-Kato-Majda type criterion for the blowup of solutions of the inviscid sabra shell model and show the global regularity of the solutions in the two-dimensional  parameters regime. 
We now brief  the model used in this paper and describe the problem of interest.
    
\subsection{Spectral form of NSE and shell model}
The spectral form of Navier Stokes Equations is a starting point of shell models. Consider the incompressible Navier Stokes Equation which is given by 
\begin{equation*}
\frac{du}{dt} +  (u\cdot \nabla)u = -\nabla p + \nu \Delta u + f
\end{equation*} 
with the continuity equation
\begin{equation*}
div \; u =0.
\end{equation*}
in the domain $\Omega \subset \mathbb{R}^d$ where $d=2$ or $3$. Here, $u$ denotes the velocity of the fluid, $p$ is the pressure and $f$ is the forcing term. To rewrite Navier Stokes Equations in spectral form we take Fourier transform  of  equation to get,
\begin{equation}\label{spec nvs}
 \frac{du_j(n)}{dt} = -i\left( \frac{2\pi}{L} \right) n_j \sum_{n'} \left( \delta_{il} - \frac{n_in_l'}{n^2} \right) u_i(n')u_l(n-n') - \nu k_n^2u_j(n) + f_j(n)
 \end{equation}
where $n$ and $n'$  are vectors in $\mathbb{R}^d$,
\begin{align*}
u_j(k) = \frac{1}{(2\pi)^3} \int \exp^{-ikx}u_j(x) dx 
\end{align*}
and the wave vectors $k(n) $ are given by $ k(n)= \frac{2\pi n}{L}$ [see \cite{Peter}]. \\
To describe the shell model, the spectral spaces are divided into concentric spheres of exponentially growing radius,
\begin{equation}\label{e2}
k_n=k_0\lambda ^n
\end{equation}
with fixed $\lambda>1$ and $k_0>0$. The one dimensional wave numbers are  denoted by $k_n$'s such that $ k_{n-1} < |k| < k_n$. The set of wave numbers contained in the $n^{th}$ sphere is called $n^{th}$ shell and $\lambda$ is called shell spacing parameter. The spectral velocity  $u_n$ is a kind of mean velocity, of the complex Fourier coefficients of the velocity in the $n^{th}$ shell. 

%In the sabra shell model the nonlinear part of the spectral Navier - Stokes Equation \eqref{spec nvs} will not only conserve energy and helicity (in the 3D case) globally but  also locally that is in each triad. To derive the form of sabra shell model, the usual construction of local interactions in $k$-space, inviscid conservation of energy and fulfillment of Liouville's theorem are used apart from the demand that the momenta involved in the triad interactions must add up to zero. 
 
The equations of motion for the sabra shell model are given by
\begin{equation}\label{e1}
\frac{du_{n}}{dt}=i(ak_{n+1}u_{n+2}
  u_{n+1}^*+bk_nu_{n+1}u_{n-1}^*-ck_{n-1}u_{n-1}u_{n-2})-\nu k_n^2u_n+f_n
\end{equation}
for $n=1,2,3, \cdots$, with the convention that $u_{-1} = u_0 = 0$. The kinematic viscosity is represented by $\nu>0$ and $f_n$'s are the Fourier components of the forcing term. The nonlinear term defines the nonlinear interaction between the nearest nodes. The constants $a,b,c$ are chosen such that $a+b+c=0$. For more details one can refer to Chapter 3 of \cite{Peter}. 

 % In this work we would like to study stabllization of one such widely accepted shell model of turbulence  known as sabra shell model via finite dimensional control. Moreover we also want to study corresponding $H^\infty$ control problem.

% The sabra shell model of turbulence describes the evolution of complex Fourier components of a scalar velocity field denoted by $u_n$. The associated one-dimensional wave numbers are denoted by $k_n$, where the discrete index $n$ is referred to as the “shell index”. The equations of motion for the sabra shell model are given by
%\begin{equation}\label{e1}
%\frac{du_{n}}{dt}=i(ak_{n+1}u_{n+2}
%  u_{n+1}^*+bk_nu_{n+1}u_{n-1}^*-ck_{n-1}u_{n-1}u_{n-2})-\nu k_n^2u_n+f_n
%\end{equation}
%for $n=1,2,3, \cdots$, with the convention that $u_{-1} = u_0 = 0$. The wave numbers $k_n$ are defined as:
%\begin{equation*}
%k_n=k_0\lambda ^n.
%\end{equation*} The kinematic viscosity is represented by $\nu>0$ and $f_n$'s are the Fourier components of the forcing term. The nonlinear term defines the nonlinear interaction between the nearest nodes. The constants $a,b,c$ are chosen such that $a+b+c=0$.

Control problems associated with turbulence equations in general and shell models in particular, have not been studied widely. To our knowledge, there were no known results for the control problems associated with shell models of turbulence until in \cite{BD}, we have studied optimal control problems and invariant subspaces for the sabra shell model. However, stabilization results for shell models are completely open. In the current work our aim is to design a finite dimensional controller in the feedback form which will exponentially stabilize sabra shell model of turbulence.  Further we will study robust stabilization which will stabilize sabra shell model in the presence of unknown disturbance.

Stabilization results for the nonlinear parabolic partial differential equations have been actively studied for the past two decades.  The stabilization problem for Navier-Stokes equations have been studied for the case of control acting as a distributed parameter using infinite dimensional controller in \cite{barbu 1}, \cite{T} and using finite dimensional controllers in \cite{BT}, \cite{BT 1}, \cite{barbu ns}. Moreover, boundary stabilization of fluid flow problems have been extensively studied in \cite{fursikov1}, \cite{T1}, \cite{T}, \cite{raymond1} using infinite dimensional feedback controller.  The robust stabilization  for finite-dimensional control system are detailed in \cite{doyle} and for infinite dimensional system is developed in \cite{tadmor}, \cite{barbu}, \cite{van1}. Robust stabilization using frequency domain approach has been introduced in \cite{zames} for a finite dimensional system and also discussed in \cite{foais}, \cite{curtain}.  The $H^\infty$ stabilization for abstract parabolic systems with internal control is studied in \cite{benpaper}, \cite{basar}, \cite{bewley}, \cite{mejdo}  and  for Navier Stokes' systems in \cite{robust1}. In \cite{boudstab} authors study  the $ H^\infty$ boundary stabilization for Navier-Stokes equations.  

 In the current work we extend results of  \cite{barbu}, \cite{BT} for stabilization and \cite{robust1} for $H^\infty$  stabilization of sabra shell model of turbulence. The novelty of the work lies in finding finite dimensional controller which in the particular case of shell model translates to  proving that only finitely many modes will suffice to stabilize the system. Moreover,  stabilization problem and $H^\infty$ stabilization  for turbulence models and/or shell models are treated in this paper for the first time as per our knowledge.

The paper is organized as follows: We discuss the functional setting of the problem, important properties of the operators involved and the existence result from \cite{PBT} in the next section. We are reiterating few of the properties and important theorems from \cite{PBT} and \cite{PBT 1}. In section 3, we first prove existence of solution for steady state equation corresponding to sabra shell model and later part is devoted to prove internal stabilization of sabra shell model via finite dimensional feedback controller. In section 4 we study the robust stabilization of the model. For, we first consider linearized system and prove robust stabilization of it. Further we prove for nonlinear system, results hold provided initial data and disturbance are small enough.  
%--------

\section{Functional setting}
In this section we consider the functional framework considered in \cite{PBT} so that equation \eqref{e1} can be written in operator form in infinite dimensional Hilbert space. We look at $\{u_n\}$ as an element of $H=l^2(\mathbb{C} )$ and rewrite the equation \eqref{e1} in the following functional form by appropriately defining operators $A$ and $B$, 
\begin{equation} \label{e3}
\frac{du}{dt}+\nu Au+B(u,u)=f \ u(0)=u^0.
\end{equation}
For defining operators $A$ and $B$ we introduce certain functional spaces below.
For every $ u, v \; \in {H}$ the scalar product $(\cdot,\cdot)$ and the corresponding norm $|\cdot|$ are defined as,
\begin{equation*}
(u,v)=\sum\limits_{n=1}^{\infty} u_nv_n^*,\ |u|=(\sum\limits_{n=1}^{\infty} |u_n|^2)^{\frac{1}{2}}.
\end{equation*}
Let $(\phi_j)_{j=1}^{\infty}$ be the standard canonical orthonormal basis of $H$. The linear operator $A:D(A)\rightarrow H$ is defined through its action on the elements of the canonical basis of $H$ as
\begin{equation*}
A\phi_j=k_j^2\phi_j,
\end{equation*}
where the eigenvalues $k_j^2$ satisfy relation \eqref{e2}. The domain of $A$ contains all those elements of $H$ for which  $| Au| $ is finite. It is denoted by  $ D(A) $ and is a dense subset of $H$. Moreover, it is a Hilbert space when equipped with graph norm 
\begin{equation}
\|u\|_{D(A)}= |Au| \ \ \forall u\in D(A). \nonumber
\end{equation}

The bilinear operator $B(u,v)$ will be defined in the following way. Let $u,v\in H$ be of the form  $u=\sum_{n=1}^{\infty}u_n\phi_n$ and  $v=\sum_{n=1}^{\infty}v_n\phi_n$.
 Then,
\begin{equation*}
B(u,v)=-i\sum_{n=1}^{\infty}(ak_{n+1}v_{n+2}
 u_{n+1}^*+bk_nv_{n+1}u_{n-1}^*+ak_{n-1}u_{n-1}v_{n-2}+bk_{n-1}v_{n-1}u_{n-2})\phi_n.
\end{equation*}
With the assumption $u^0=u_{-1}=v_0=v_{-1}=0$ and together with the energy conservation condition $a+b+c=0$, we can simplify and rewrite $B(u,v)$ as
$$B(u,u)  = -i\sum_{n=1}^{\infty}(ak_{n+1}u_{n+2}
 u_{n+1}^*+bk_nu_{n+1}u_{n-1}^*-ck_{n-1}u_{n-1}u_{n-2})\phi_n$$

With above definitions of $A$ and $B$, \eqref{e1} can be written in the form $$\frac{du}{dt} +\nu  A u + B(u,u) = f , \ u(0) = u^0.$$  We now give some properties of $A$ and $B$.

Clearly, $A$ is positive definite, diagonal operator. Since $A$ is a positive definite operator, the powers of $A$ can be defined for every $s\in \mathbb{R}.$ 
For $ u=(u_1,u_2,...) \in H, $  define $ A^su=(k_1^{2s}u_1,k_2^{2s}u_2,...)$.\\
Furthermore we define the spaces  
$$V_s:=D(A^\frac{s}{2})={\{u=(u_1,u_2,...) \ \vert \ \sum\limits_{j=1}^{\infty} k_j^{2s}|u_j|^2<{\infty}\}}$$\\
which are Hilbert spaces equipped with the following scalar product and norm,\\
$$(u,v)_s=(A^{s/2}u,A^{s/2}v) \ \forall u,v \in V_s, \; \|u\|=(u,u)_s \ \ \forall u \in V_s.$$ 
Using above definition of the norm we can show that  $V_s \subset V_0=H \subset V_{-s}  \ \ \forall s>0$. Moreover, it can be shown that the dual space of $ V_s $ is given by $ V_{-s} $.
Domain of $A^{1/2} $ is denoted by  $V$ and is  equipped with scalar product $((u,v))=  (A^{1/2}u,A^{1/2}v) \ \ \forall u,v \in D(A^{1/2})$. Thus we get the  inclusion 
 $$V \subset H=H' \subset V,' $$
where $ V^\prime$, the dual space of $V$ which is  identified with  $D(A^{-1/2})$. The norm in $V$ is denoted by $\| \cdot \|$.  We denote by $ \langle\cdot,\cdot\rangle$ the action of the functionals from $ V' $ on the elements of $ V $. 
Hence for every  $ u\in V $, the $H$ scalar product of $f \in H$ and $ u\in V $ is  same as the action of $f$ on $u$ as a functional in $V'$.
$$_{V'}\langle f,u\rangle_V = (f,u)_H \ \ \forall f \in H, \ \forall u \in V.$$ 
So for every $u\in D(A)$ and for every $ v \in V$, we have $((u,v))=(Au,v)= \langle Au,v \rangle$ .\\
Since $D(A)$ is dense in $ V$ we can extend the definition of the operator $A:V \longrightarrow V' $ in such a way that $\langle Au,v \rangle = ((u,v)) \ \ \forall u,v \in V.$\\
In particular it follows that 
$$\|Au\|_{V'} = \|u\|_V \ \  \forall u \in V.$$

%\iffalse
%\begin{defi} \label{sfs}
%Let us introduce the Sobolev functional spaces for $ m \in \mathbb{R} $
%\begin{eqnarray*}
%w^{m,p}&:=&\{u=(u_1,u_2,...)\  \vert \ \|A^{m/2}u\|_p =(\sum_{n=1}^{\infty}(k_n^m|u_n|)^p)^\frac{1}{p}<\infty\},\;1\leq p<\infty \nonumber \\ 
%w^{m,\infty}&:=&\{u=(u_1,u_2,u_3...) \  \vert \ \|A^{m/2}u\|_\infty=(\sup(k_n^m|u_n|)<\infty)\},\; p=\infty 
%\end{eqnarray*}
%\end{defi}
%For $u\in w^{m,p}$, the norm is defined as $$\|u\|_{w^{m,p}}=\|A^{m/2}u\|_p,$$ 
%where  $\|\cdot\|_p$  is the usual norm in the  $l^p$  sequence space. $w^{m,2}$ is a Hilbert space with respect to the norm defined above. Clearly, $w^{1,2}=V$.
%
%Also the inclusion $V\subset w^{1,\infty}$ is continuous (but not compact) because
%\begin{align}
%\|u\|_{w^{1,\infty}} &= \|A^{1/2}u\|_{\infty}  \nonumber 
%\\&\leq \|A^{1/2}u\|_{2}  \nonumber 
%\\&= \|u\|_V \nonumber
%\end{align}
% \fi

\begin{thm} \textbf{(Properties of bilinear operator $B$ )} \label{prop b}
\begin{enumerate}
\item $B:H\times V\longrightarrow H$ and $B:V\times H\longrightarrow H$ are bounded, bilinear operators. Specifically 
\begin{enumerate}
\item $|B(u,v)|\leq C_1|u|\|v\| \ \ \forall u\in H, v\in V$
\item  $ |B(u,v)|\leq C_2|v|\|u\| \ \ \forall u\in V, v\in H$
\end{enumerate} 
where \\
 $C_1=(|a|(\lambda^{-1}+\lambda)+|b|(\lambda^{-1}+1))$ \\ 
 $C_2=(2|a|+2\lambda|b|)$.  \\
 \item $B:H\times H\longrightarrow V'$ is a bounded bilinear operator and \\
 $\|B(u,v)\|_{V'}\leq C_1|u||v| \ \ \forall u,v\in H.$ \\
 \item $B:H\times D(A)\longrightarrow V$ is a bounded bilinear operator and for every $u\in H$ and $v\in D(A)$ \\
  $\|B(u,v)\|\leq C_3|u||Av|  \\
  C_3=(|a|(\lambda^{3}+\lambda^{-3})+|b|(\lambda+\lambda^{-2})).$  \\
 \item For every $u\in H$ and $v\in V$, $Re(B(u,v),v)=0.$ \\
 \item Let $u,v,w \in V.$ Denote $b(u,v,w)= \langle B(u,v),w \rangle$. Then 
 \begin{enumerate} 
 \item $b(u,v,w) = - b(v,u,w) $
 \item $b(v,u,w) = - b(v,w,u) . $
 \item $b(u,v,v)=0 .$ \\
\end{enumerate}
\item Let us denote $B(u)=B(u,u)$. Then the map $B: V \rightarrow V' $ which takes $u \longrightarrow B(u)$ is Gateaux differentiable. Moreover, for each $u  \in V$ the  Gateaux derivative of $B$ in the direction of $v \in V $  is denoted by $ B'(u) v : V \rightarrow V'$ and is given by
\begin{align}\label{bprime}
 B'(u)v &= B(u,v) + B(v,u) , \ \ \ \forall v\in V.\\
\text {and} \; \; \langle B'(u)v,w\rangle_{(V',V)} &= b(u,v,w) + b(v,u,w) \ \ \ \forall u, v, w \in V.  \nonumber
\end{align}
\item Let $B'(u)^* $ denote the adjoint of $B'(u)  $. Therefore for each $ v \in V $,   we have $\langle B'(u)v,w\rangle_{(V',V)}= \langle v,B'(u)^*w\rangle_{(V, V' )} \ \   \forall w\in V$. \\
Hence,  $ B'(u)^*w : V \rightarrow V'$ is  given by
$$ B'(u)^*w = -B(u,w) - B(w,u) \ \ \ \forall w\in V.$$
\end{enumerate}
\end{thm}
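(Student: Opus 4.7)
The proof is a sequence of direct computations exploiting the explicit formula for $B(u,v)$ and the geometric scaling $k_n = k_0\lambda^n$. The plan is to handle the items in the order (1)--(4), then (5), and finally (6)--(7), since the later statements build on the earlier ones.

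For the boundedness results in (1), (2), (3), I would decompose the $n$-th component of $B(u,v)$ into its four summands and bound each separately by the triangle inequality. For (1)(a), a typical summand such as $ak_{n+1}v_{n+2}u_{n+1}^*$ is estimated by pulling out $|u_{n+1}|\leq |u|$ and rewriting $k_{n+1}=\lambda^{-1}k_{n+2}$ so that the $\ell^2$-norm in $n$ pairs $k$ with $v$ and yields $|u|\|v\|$; summing the four contributions with their respective powers of $\lambda$ produces exactly $C_1$. Part (1)(b) is symmetric, routing the factor $k_n$ onto $u$ instead. For (2), I would evaluate the duality pairing $\langle B(u,v),w\rangle$ and apply Cauchy--Schwarz termwise, putting the full $k$-factor on $w$ so that it matches $\|w\|_V$. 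Part (3) is analogous but now two factors of $k$ must be rerouted onto the $v$-slot in order to pair with $|Av|$, which is the reason $C_3$ contains $\lambda^{\pm 3}$ together with $\lambda$ and $\lambda^{-2}$.

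The key algebraic input for (4) is the cancellation condition $a+b+c=0$. I would write out $(B(u,v),v)$ from the definition and shift the summation index in each of the four summands so that every resulting product involves the same triple of $v$-entries; the coefficients of the resulting bracketed expression sum to a multiple of $a+b+c$, and taking the real part eliminates the $-i$ prefactor. The trilinear identities (5) are then immediate: (5)(c) is the complex-valued polarization of (4), obtained by expanding $(B(u,v+w),v+w)=0$ and isolating the cross-term; (5)(a) and (5)(b) follow by reading off the analogous antisymmetry from the same index-shift computation applied to $b(v,u,w)$.

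Parts (6) and (7) are then routine. For (6), the expansion
\[
 B(u+\e v)-B(u) = \e\bigl(B(u,v)+B(v,u)\bigr) + \e^2 B(v,v)
\]
is immediate from bilinearity of $B(\cdot,\cdot)$; dividing by $\e$ and letting $\e\to 0$ gives the Gateaux derivative, with the quadratic remainder controlled in $V'$ by part (2). For (7), I would apply the antisymmetry identities from (5) to rewrite
\[
 \langle B'(u)v,w\rangle = b(u,v,w)+b(v,u,w) = -b(u,w,v)-b(v,w,u),
\]
and then identify the right-hand side with $\langle v,-B(u,w)-B(w,u)\rangle$ through the pairing, reading off the formula for $B'(u)^*w$.

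The main obstacle is part (4): getting the index shifts to expose the coefficient combination $a+b+c$ requires careful bookkeeping since the four summands of $B(u,v)$ naturally sit at different shifted indices. Everything else is mechanical once this cancellation has been isolated, but (4) is precisely the reason the shell-model coefficients are chosen to satisfy $a+b+c=0$ in the first place, so it is the heart of the theorem.
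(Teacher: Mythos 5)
The paper does not actually prove this theorem: it cites \cite{PBT} for items 1--4 and Theorem 2.1.1 of \cite{BD} for items 5--7, so your direct computational route from the explicit formula for $B(u,v)$ is the appropriate one and matches what those references do. Your treatment of the boundedness statements (1)--(3) is correct and does reproduce the stated constants: routing the factor $k$ (or $k^2$ for item (3)) onto the slot carrying the stronger norm and reading off the powers of $\lambda$ term by term gives exactly $C_1$, $C_2$, $C_3$. Items (6) and (7) are also handled correctly, with the quadratic remainder $\ep^2 B(v,v)$ controlled in $V'$ by item (2).

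There is, however, a genuine gap in your proof of item (4), which you single out as the heart of the theorem. The bilinear operator $B(u,v)$ as defined in the paper contains only the coefficients $a$ and $b$ (each appearing twice); $c$ does not appear at all. If you carry out the index shifts you describe in $(B(u,v),v)=-i\sum_n(ak_{n+1}v_{n+2}u_{n+1}^*v_n^*+bk_nv_{n+1}u_{n-1}^*v_n^*+ak_{n-1}u_{n-1}v_{n-2}v_n^*+bk_{n-1}v_{n-1}u_{n-2}v_n^*)$, the first and third sums align (after shifting to a common index $j$) into $ak_j\bigl(v_{j+1}u_j^*v_{j-1}^*+(v_{j+1}u_j^*v_{j-1}^*)^*\bigr)$ and the second and fourth into the analogous conjugate pair with coefficient $b$. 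The bracket is therefore real because the four summands collapse into two complex-conjugate pairs, and the prefactor $-i$ then makes the whole inner product purely imaginary; no relation among $a$, $b$, $c$ is used. Your claimed mechanism --- ``the coefficients of the resulting bracketed expression sum to a multiple of $a+b+c$'' --- cannot occur (there is no $c$ to sum), and the products involve two $v$-entries and one $u$-entry, not ``the same triple of $v$-entries.'' The condition $a+b+c=0$ enters elsewhere: it is what identifies $B(u,u)$ with the sabra nonlinearity $-ck_{n-1}u_{n-1}u_{n-2}$. Relatedly, your derivation of (5)(c) from (4) by polarization is backwards: $b(u,v,v)=0$ is essentially the same computation as (4) (not a consequence of polarizing it), whereas polarizing (4) in its last two slots is what yields the antisymmetry (5)(b); and over $\mathbb{C}$ a polarization argument must also use the direction $v+iw$, not just $v+w$, to recover the full cross-term. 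The computation you propose would still reach the correct conclusion if carried out honestly, but the cancellation you predict is not the one that occurs, so as written the key step does not close.
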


\begin{proof}
The proofs 1.-4. are given in \cite{PBT}. Refer Theorem 2.1.1 of \cite{BD} for the proof of 5.,6.,7.
\end{proof}

%-------------

The existence and uniqueness for shell model of turbulence \eqref{e3} are studied in \cite{PBT}  mainly using Galerkin approximation and Aubin's Compactness lemma. We  state these theorems below. For proofs see \cite{PBT} Theorem 2 and Theorem 4.
\begin{thm}\label{weak}
Let $f \in L^2(0,T;V')$ and $u^ 0\in H$. Then there exists a unique weak solution $u \in L^{\infty}([0,T],H) \cap L^2([0,T],V)$ to \eqref{e3}. Moreover the weak solution $u \in C([0,T],H)$.  
\end{thm}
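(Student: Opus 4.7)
The plan is to establish existence via a Faedo–Galerkin approximation in the canonical basis $\{\phi_j\}$, derive uniform a priori estimates using the orthogonality relation $\mathrm{Re}(B(u,v),v)=0$, pass to the limit by Aubin–Lions compactness, and then obtain uniqueness and continuity in $H$ from the bilinear bounds in Theorem \ref{prop b}.

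First I would let $H_m=\mathrm{span}\{\phi_1,\dots,\phi_m\}$ and $P_m$ be the orthogonal projection onto $H_m$. The approximating problem is the ODE system
\begin{equation*}
\frac{du_m}{dt}+\nu A u_m+P_m B(u_m,u_m)=P_m f,\qquad u_m(0)=P_m u^0,
\end{equation*}
which has a unique local $C^1$ solution by the standard Picard theorem since $B$ is bilinear and continuous on the finite-dimensional space $H_m$. To get a global solution I would take the inner product with $u_m$ and use property 4 of Theorem \ref{prop b}, namely $\mathrm{Re}(B(u_m,u_m),u_m)=0$, to get
\begin{equation*}
\tfrac{1}{2}\tfrac{d}{dt}|u_m|^{2}+\nu\|u_m\|^{2}=\mathrm{Re}\langle f,u_m\rangle_{V',V}\le \tfrac{1}{2\nu}\|f\|_{V'}^{2}+\tfrac{\nu}{2}\|u_m\|^{2}.
\end{equation*}
Integrating yields uniform bounds for $u_m$ in $L^{\infty}(0,T;H)\cap L^{2}(0,T;V)$, which in particular prevents blow-up and gives global existence of $u_m$.

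The next step is to bound the time derivative. Using property 2, $\|B(u_m,u_m)\|_{V'}\le C_1|u_m|^{2}$, so
\begin{equation*}
\Bigl\|\tfrac{du_m}{dt}\Bigr\|_{V'}\le \nu\|u_m\|+C_1|u_m|^{2}+\|f\|_{V'},
\end{equation*}
and squaring and integrating shows $\tfrac{du_m}{dt}$ is bounded in $L^{2}(0,T;V')$. I would then apply the Aubin–Lions lemma with the triple $V\subset H\subset V'$ (where the first embedding is compact since $A$ has compact resolvent, being diagonal with eigenvalues $k_j^2\to\infty$) to extract a subsequence converging strongly in $L^{2}(0,T;H)$ and weakly in $L^{2}(0,T;V)$ to some $u$. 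The bilinearity and continuity of $B$ from $H\times H$ to $V'$ make it straightforward to pass to the limit in the nonlinear term: for $\phi\in C^{1}([0,T];V)$ one writes the weak formulation and uses strong $L^{2}(0,T;H)$ convergence, together with the estimate of property 2, to identify the limit of $\int_0^T \langle B(u_m,u_m),\phi\rangle\,dt$ as $\int_0^T \langle B(u,u),\phi\rangle\,dt$.

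For uniqueness, if $u,\widetilde u$ are two solutions and $w=u-\widetilde u$ then $w$ satisfies $w_t+\nu A w+B(u,u)-B(\widetilde u,\widetilde u)=0$. Writing $B(u,u)-B(\widetilde u,\widetilde u)=B(w,u)+B(\widetilde u,w)$, taking the $H$-pairing with $w$, and using $\mathrm{Re}(B(\widetilde u,w),w)=0$ together with property 1(a), I would get
\begin{equation*}
\tfrac{1}{2}\tfrac{d}{dt}|w|^{2}+\nu\|w\|^{2}\le |\mathrm{Re}(B(w,u),w)|\le C_1|w|\|u\||w|^{1/2}\|w\|^{1/2},
\end{equation*}
or more cleanly use property 1(b) to write $|B(w,u)|\le C_2|w|\|u\|$ and then Young's inequality to absorb $\|w\|^{2}$ into the viscous term, leaving $\tfrac{d}{dt}|w|^{2}\le C|w|^{2}\|u\|^{2}$; since $\|u\|^{2}\in L^{1}(0,T)$, Gronwall forces $w\equiv 0$. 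Finally, since $u\in L^{2}(0,T;V)$ and $u_t\in L^{2}(0,T;V')$, the classical Lions–Magenes lemma yields $u\in C([0,T];H)$ and the energy identity, finishing the theorem. The main technical point is the nonlinear estimate in $V'$ ensuring $u_t\in L^{2}(0,T;V')$, which is what makes both the Aubin–Lions compactness argument and the continuity-in-$H$ conclusion go through cleanly.
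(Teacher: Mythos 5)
Your proposal is correct and follows exactly the strategy of the proof the paper relies on: the paper does not prove this theorem itself but cites Theorems 2 and 4 of the Constantin--Levant--Titi reference, which proceed precisely by Faedo--Galerkin approximation, the energy estimate via $\mathrm{Re}(B(u,u),u)=0$, Aubin compactness, and the Lions--Magenes lemma for continuity in $H$. The only cosmetic slips are the unneeded interpolation bound $|w|^{1/2}\|w\|^{1/2}$ (which you rightly discard) and the constant label in the uniqueness step, where the bound $|B(w,u)|\le C_1|w|\|u\|$ comes from property 1(a) rather than 1(b); neither affects the argument.
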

\begin{thm}\label{strong}
Let $f \in L^{\infty}([0,T],H)$ and $u^0 \in V$. Then there exists a unique strong solution $u \in C([0,T],V) \cap L^2([0,T],D(A))$ to \eqref{e3}. 
\end{thm}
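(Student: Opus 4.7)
The plan is to prove Theorem \ref{strong} by the standard Faedo--Galerkin method, mirroring Theorem \ref{weak} but carrying out the energy estimates one derivative higher. Let $H_m = \mathrm{span}\{\phi_1,\dots,\phi_m\}$ and $P_m$ be the orthogonal projection of $H$ onto $H_m$. Seek $u_m(t)=\sum_{j=1}^m g_{j,m}(t)\phi_j$ solving
\begin{equation*}
\frac{du_m}{dt}+\nu A u_m + P_m B(u_m,u_m) = P_m f,\qquad u_m(0)=P_m u^0.
\end{equation*}
Because $A$ is diagonal and $B$ is smooth in its arguments, this is a locally Lipschitz ODE in $\mathbb{C}^m$, yielding a unique local solution; the a priori bounds below will promote it to a global one.

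The central a priori estimate is obtained by taking the $H$ inner product of the Galerkin equation with $A u_m$ and taking real parts, giving
\begin{equation*}
\tfrac{1}{2}\tfrac{d}{dt}\|u_m\|^2 + \nu |Au_m|^2 = \mathrm{Re}(f,Au_m) - \mathrm{Re}(B(u_m,u_m),Au_m).
\end{equation*}
For the trilinear term I would use property 1(a) of Theorem \ref{prop b} to bound $|B(u_m,u_m)|\le C_1 |u_m|\,\|u_m\|$, so Cauchy--Schwarz and Young's inequality give
\begin{equation*}
|(B(u_m,u_m),Au_m)| \le C_1 |u_m|\,\|u_m\|\,|Au_m| \le \tfrac{\nu}{4}|Au_m|^2 + \tfrac{C_1^2}{\nu}|u_m|^2\|u_m\|^2,
\end{equation*}
and similarly $|\mathrm{Re}(f,Au_m)|\le \tfrac{\nu}{4}|Au_m|^2+\tfrac{1}{\nu}|f|^2$. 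After absorbing, one obtains
\begin{equation*}
\tfrac{d}{dt}\|u_m\|^2 + \nu |Au_m|^2 \le \tfrac{2C_1^2}{\nu}|u_m|^2\|u_m\|^2 + \tfrac{2}{\nu}|f|^2.
\end{equation*}
Because Theorem \ref{weak} already supplies a uniform bound on $|u_m|$ in $L^\infty(0,T;H)\cap L^2(0,T;V)$ (obtained by testing with $u_m$ and using property 4), the coefficient $|u_m|^2$ in front of $\|u_m\|^2$ is $L^1(0,T)$ in $t$ uniformly in $m$. Gronwall's inequality therefore gives uniform bounds for $u_m$ in $L^\infty(0,T;V)\cap L^2(0,T;D(A))$. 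From the equation itself, $\tfrac{du_m}{dt}$ is uniformly bounded in $L^2(0,T;H)$ after using property 1(a) once more on the nonlinearity.

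The passage to the limit is by now standard: extract a subsequence converging weakly-$*$ in $L^\infty(0,T;V)$, weakly in $L^2(0,T;D(A))$, and, by Aubin--Lions compactness (with the compact embedding $V\hookrightarrow H$), strongly in $L^2(0,T;V)$ and in $C([0,T];H)$. The strong $L^2(0,T;V)$ convergence combined with property 1(a)--1(b) is enough to pass to the limit in the bilinear form, identifying the limit $u$ as a strong solution; continuity $u\in C([0,T];V)$ follows from $u\in L^2(0,T;D(A))$ and $\tfrac{du}{dt}\in L^2(0,T;H)$ via the standard Lions--Magenes lemma. For uniqueness, set $w=u_1-u_2$ for two strong solutions; using $B(u_1,u_1)-B(u_2,u_2)=B(w,u_1)+B(u_2,w)$, property 4 kills the $B(u_2,w)$ contribution after testing with $w$, and property 1(a) controls $|(B(w,u_1),w)|\le C_1\|u_1\|\,|w|^2$, so Gronwall gives $w\equiv 0$.

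The main obstacle is not any single estimate but ensuring that the bound on $\|u_m\|$ does not blow up in finite time; this is why I would first invoke the weak-solution bound on $|u_m|$ so that the Gronwall kernel $|u_m|^2$ is genuinely integrable on $[0,T]$, which is what makes the $V$-estimate global rather than merely local. Once this is arranged, the rest of the argument is routine compactness.
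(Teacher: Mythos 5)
Your argument is correct and follows essentially the same route as the source: the paper itself does not prove this theorem but defers to \cite{PBT} (Theorem 4 there), which likewise proceeds by Galerkin approximation, the $(\,\cdot\,,Au_m)$ energy estimate controlled via the bound $|B(u,v)|\leq C_1|u|\|v\|$, and Aubin--Lions compactness. Your observation that the Gronwall kernel $|u_m|^2$ is integrable thanks to the lower-order energy estimate is exactly the point that makes the $V$-bound global, so nothing is missing.
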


\section{Internal Stabilization}
Consider the controlled sabra shell model of turbulence 
\begin{equation} \label{i1}
\frac{du}{dt}+\nu Au+B(u) = f+\B_1 U , \ u(0)=u^0,
\end{equation}
where $A$, $B$ are as defined in the previous section and $B_1 \in \mathcal{L}(H,H)$ and $U$ is the control variable. Further we have $A$ and $\B$ satisfy the following properties,
\begin{assum}
\begin{enumerate}
\item $-A$ generates a $C_0$-semigroup on $H$. \\
\item $B$ is G\'ateaux differentiable on $D(A)$, i.e., 
$$B'(u)z = \lim_{\lambda \rightarrow 0} \frac{B(u+\lambda z)- B(u)}{\lambda}$$
exist in $H$ for all $u, z \in D(A).$ [see 6, 7 of Theorem \eqref{prop b}]
\end{enumerate}
\end{assum}

Now we will study the internal stabilization of \eqref{i1} via finite dimensional feedback controller. For that let us consider the steady state equation given by 
 \begin{equation} \label{steady}
\nu Au_e+B(u_e,u_e)=f.
\end{equation}		
We prove the following existence theorem of the steady state system \eqref{steady}.
\begin{thm}
Let $f\in V'$. Then there exists a weak solution $u_e \in V$ for the steady state system \eqref{steady} with the weak formulation 
\begin{equation}\label{i2}
\nu \langle Au_e,v \rangle +\langle B(u_e,u_e),v\rangle =\langle f,v \rangle .
\end{equation} 
for all $v\in V$.

Moreover if we take $f \in H$, then $u_e \in D(A).$
\end{thm}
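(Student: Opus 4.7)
The plan is to construct $u_e$ by a Galerkin approximation and improve regularity a posteriori. Fix $n\in\mathbb{N}$, set $H_n:=\mathrm{span}\{\phi_1,\ldots,\phi_n\}$, and let $P_n:H\to H_n$ be the orthogonal projection. I look for $u_n\in H_n$ satisfying
$$\nu(Au_n,v)+(B(u_n,u_n),v)=\langle f,v\rangle,\quad \forall v\in H_n.$$
Defining the continuous map $F:H_n\to H_n$ by $F(u):=\nu Au+P_n B(u,u)-P_n f$, property 5(c) of Theorem \ref{prop b} gives $\mathrm{Re}(B(u,u),u)=0$, so
$$\mathrm{Re}(F(u),u)\ge \nu\|u\|_V^{2}-\|f\|_{V'}\|u\|_V,$$
which is strictly positive whenever $\|u\|_V>\|f\|_{V'}/\nu$. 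The standard Brouwer corollary, applied after identifying $H_n\simeq\mathbb{R}^{2n}$, then yields $u_n$ with $F(u_n)=0$ and the uniform bound $\|u_n\|_V\le\|f\|_{V'}/\nu$.

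Since $A$ has eigenvalues $k_j^2=k_0^2\lambda^{2j}\to\infty$, the embedding $V\hookrightarrow H$ is compact. Extracting a subsequence (not relabeled), $u_n\rightharpoonup u_e$ weakly in $V$ and $u_n\to u_e$ strongly in $H$. The linear terms in the weak formulation pass to the limit by the weak convergence. For the nonlinear term, I test against an arbitrary $v\in \bigcup_m H_m$, a dense subspace of $V$, and decompose
$$b(u_n,u_n,v)-b(u_e,u_e,v)=b(u_n-u_e,u_n,v)+b(u_e,u_n-u_e,v).$$
Property 1(a) of Theorem \ref{prop b} bounds the first summand by $C_1|u_n-u_e|\,\|u_n\|_V|v|\to 0$. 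Rewriting the second summand as $-b(u_e,v,u_n-u_e)$ via property 5(b) and again invoking 1(a), it is bounded by $C_1|u_e|\,\|v\|_V|u_n-u_e|\to 0$. A density argument then extends \eqref{i2} to all $v\in V$, so $u_e$ is a weak solution.

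For the improved regularity, suppose $f\in H$. Reading \eqref{i2} as the identity $\nu Au_e=f-B(u_e,u_e)$ in $V'$, property 1(a) of Theorem \ref{prop b} gives $|B(u_e,u_e)|\le C_1|u_e|\,\|u_e\|_V<\infty$, so the right-hand side in fact lies in $H$. Hence $Au_e\in H$, which by the definition of the domain means $u_e\in D(A)$.

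I expect the main obstacle to be the limit passage in the trilinear form. The energy bound only provides weak convergence in $V$ and strong convergence in $H$, so a naive symmetric splitting is insufficient. The crucial trick is to use the antisymmetry 5(b) of Theorem \ref{prop b} to rewrite the problematic piece $b(u_e,u_n-u_e,v)$ as $-b(u_e,v,u_n-u_e)$, moving the strongly convergent factor $u_n-u_e$ into the slot where the 1(a) estimate can pair it with the $H$-norm while putting the $V$-control on the fixed test function $v$.
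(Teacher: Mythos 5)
Your proposal is correct, and its skeleton (Galerkin approximation in the span of the eigenfunctions of $A$, the Brouwer-type corollary from Temam to solve the finite-dimensional problem, the uniform bound $\|u_n\|\le\|f\|_{V'}/\nu$, and compactness of $V\hookrightarrow H$) coincides with the paper's. You diverge in two sub-steps, and in both cases your version is the more robust one. For the limit in the trilinear form, the paper writes the difference $\langle P_mB(u_e^m,u_e^m),v\rangle-\langle P_mB(u_e,u_e),v\rangle$ directly as a tail of the defining series and bounds it by $\|v\|\,|u_e-P_mu_e|^2$; as displayed, that estimate only accounts for the truncation error in $u_e$ and does not visibly control the full difference $B(u_e^m,u_e^m)-B(u_e,u_e)$. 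Your bilinear splitting $b(u_n-u_e,u_n,v)+b(u_e,u_n-u_e,v)$, combined with the antisymmetry $b(u_e,u_n-u_e,v)=-b(u_e,v,u_n-u_e)$ and estimate 1(a), is the standard complete argument and pairs the strongly convergent factor with the $H$-norm exactly where needed. For the regularity $u_e\in D(A)$ when $f\in H$, the paper tests the weak formulation with $v=Au_e$, which presupposes that $Au_e$ is an admissible test function (strictly, this should be run at the Galerkin level and passed to the limit); your bootstrapping argument, reading $\nu Au_e=f-B(u_e,u_e)$ in $V'$ and observing $|B(u_e,u_e)|\le C_1|u_e|\,\|u_e\|<\infty$ so that $Au_e\in H$, avoids this circularity entirely and is shorter. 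The only content of the paper's proof you omit is the uniqueness under the smallness condition $\nu^2>C_1\|f\|_{V'}$, which is not claimed in the theorem statement, so nothing is missing.
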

\begin{proof}
The proof follows as in the case of steady state Navier-Stokes equations by using Galerkin approximation technique, see \cite{temam}. We construct an approximate solution of \eqref{steady} and then pass to the limit.
 
Let $\{e_j\}_{j=1}^\infty$ be the eigen functions of the operator $A$, as a Galerkin basis of $V$. Let us take the $m$-dimensional subspace $V_m$ as the span of $\{e_j\}_{j=1}^m$. Consider the orthogonal projector of $V_m$ in $H$ as $P_m =P_{V_m}$. Then we can write $u^m_e=P_m u_e$ in the form $$u_e^m= \sum_{j=1}^m \alpha_j^m e_j,$$ which satisfies the following $m$-dimensional ordinary differential equation 
\begin{equation}\label{i3}
\nu \langle Au_e^m,v \rangle +\langle P_mB(u_e^m,u_e^m),v\rangle =\langle P_mf,v \rangle, \quad \forall v\in V_m.
\end{equation}
The existence of solution to \eqref{i3} will follow from the following lemma.
\begin{lem}[\cite{temam}, Lemma 1.4]\label{l3.3}
Let $X$ be a finite dimensional Hilbert space with scalar product $[\cdot,\cdot]$ and norm $[\cdot]$ and let $T$ be a continuous mapping $X$ into itself such that 
\begin{eqnarray}
[T(\xi),\xi] > 0 \text{  for  } [\xi]=k>0. \no
\end{eqnarray}
Then there exists $\xi \in X$, $[\xi] \leq k$, such that
\begin{equation}
T(\xi)=0. \no
\end{equation}
In our problem let us take  $X$ as $V_m$ and $T$ defined as
\begin{align*}
[T(u_e^m),v] = \nu (A^{1/2}u_e^m,A^{1/2}v ) +\langle P_m B(u_e^m,u_e^m),v\rangle -\langle P_mf,v \rangle.
\end{align*} 
Therefore,
\begin{align*}
[T(u_e^m),u_e^m] &= \nu |A^{1/2}u_e^m|^2 +\langle P_m B(u_e^m,u_e^m),u_e^m\rangle -\langle P_mf,u_e^m \rangle \\
&\geq \nu \|u_e^m\|^2 - \|f\|_{V'} \|u_e^m \| \\
&\geq \|u_e^m\|( \nu \|u_e^m \|- \|f\|_{V'}).  
\end{align*}
So if we choose $k= \|u_e^m \| > \frac{1}{\nu} \|f\|_{V'}$, we will get $[T(u_e),u_e^m]>0.$
\end{lem}
Thus by the Lemma \eqref{l3.3} we prove that there exists a solution of the Galerkin approximated system \eqref{i3}. 
Now let us take $v=u_e^m$ in \eqref{i3} to get
\begin{align*}
 \nu \|u_e^m\|^2 &\leq \frac{\nu}{2}\|f\|_{V'}^2 + \frac{1}{2\nu} \|u_e^m \|^2 \\
 \|u_e^m\|^2 &\leq  \frac{1}{\nu^2} \|f\|_{V'}^2. 
\end{align*}
Since $u_e^m$ is uniformly bounded in $V$, using Banach-Alaoglu theorem, we can extract a subsequence, still denoted by $u_e^m$ such that 
\begin{eqnarray}\label{i4}
u_e^m \rightharpoonup u_e \ \text{ in }\ V,
\end{eqnarray}
and by using compact embedding of $V$ in $H$ we get
\begin{eqnarray}\label{i5}
u_e^m \rightarrow u_e \ \text{ in }\ H.
\end{eqnarray}
Now we have to show that $u_e$ solves the weak formulation \eqref{i2}.
By using \eqref{i4} we get $\langle Au_e^m,v \rangle \rightarrow \langle Au_e,v \rangle, \quad \forall v \in V.$

To show $\langle P_mB(u_e^m,u_e^m),v\rangle \rightarrow \langle P_mB(u_e,u_e),v\rangle$, let us denote $P_m'=(I-P_m)$. We have
\begin{align*}
&|\langle P_mB(u_e^m,u_e^m),v\rangle - \langle P_mB(u_e,u_e),v\rangle| \\
&= |-i\sum_{n=m+1}^{\infty}(ak_{n+1}u_{e,n+2}u_{e,n+1}^*v_n^* + bk_nu_e^{n+1}u_{e,n-1}^* v_n^* - ck_{n-1}u_{e,n-1}u_{e,n-2}v_n^*)\phi_n| \\
&\leq \|P_m'v\||P_m'u_e|^2 \leq \|v\| |u_e-P_m u_e|^2 \rightarrow0.
\end{align*}
Thus we have convergence of each term in \eqref{i2} which implies $u^e$ satisfies weak formulation \eqref{i2}.

Now if we take $f \in H$. By taking $v$ as $Au_e$ in \eqref{i2} we can derive
\begin{align*}
\nu |Au_e|^2 &= -b(u_e,u_e,Au_e) + ( f, A u_e ) \\
&\leq C_1 \|u_e\||u_e||Au_e| + |f||Au_e| \\
&\leq \frac{\nu}{4} |Au_e|^2 + \frac{2C_1^2}{\nu} \|u_e\|^2|u_e|^2 + \frac{\nu}{4} |Au_e|^2 + \frac{2}{\nu} |f|^2 
\end{align*}
Therefore we get 
\begin{align*}
|Au_e|^2 \leq \frac{4C_1^2}{\nu^2} \|u_e\|^2|u_e|^2 + \frac{4}{\nu^2} |f|^2 < \infty.
\end{align*}
For the uniqueness let us define $\tilde{u}=u_e^1-u_e^2$, where $u_e^1$ and $u_e^2$ are the solution of \eqref{steady}. So $\tilde{u}$ satisfies 
\begin{equation}\label{i6}
\nu A \tilde{u} +B(u_e^1,u_e^1)- B(u_e^2,u_e^2)=0.
\end{equation}
Now taking inner product of \eqref{i6} with $\tilde{u}$ we get
\begin{align*}
\nu \|\tilde{u}\|^2 + b(u_e^1,u_e^1,\tilde{u})+ b(u_e^2,u_e^2,\tilde{u})&=0 \\
\nu \|\tilde{u}\|^2 +b(u_e^1,\tilde{u},\tilde{u})+ b(u,u_e^2,\tilde{u}) &=0 \\
\nu \|\tilde{u}\|^2 +b(u,u_e^2,\tilde{u}) &=0.
\end{align*}
For any solution of \eqref{steady} by taking duality with $u_e$ we get $$\|u_e\| \leq \frac{1}{\nu}\|f\|_{V'},$$ which implies
\begin{align*}
\nu \|\tilde{u}\|^2 &\leq C_1 \|u_e^2\|\|\tilde{u}\|^2 \\
(\nu^2-C_1 \|f\|_{V'})  \|\tilde{u}\|^2 &\leq 0.
\end{align*}
So we can conclude that if $\nu^2>C_1 \|f\|_{V'}$, then the stationary solution is unique.
\end{proof}

Now let us linearize the system \eqref{i1} around the solution $u_e$ of steady state system. The linearized system is given by
\begin{equation} \label{w=0}
\frac{d u}{dt}+\mathcal{A}u=B_1 U, \ u(0)=u^{0}.
\end{equation}
Here for each $u \in D(A)$, the operator 
\begin{eqnarray}
\mathcal{A}=A+B'(u_e), \ \text{ with } \ D(\mathcal{A})=D(A)
\end{eqnarray}
is closed, densely defined and $-\mathcal{A}$ generates a $C_0$-semigroup on $H$. 

\begin{lem}\label{lem3.4}
$-\A$ generates a $C_0$-analytic semigroup and the resolvant $(\lambda I-\A)^{-1}$ of the operator $\A$ is compact in $H$.
\end{lem}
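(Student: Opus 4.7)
The plan is to realize $B'(u_e)$ as a relatively small perturbation of $A$ in the Kato sense and then invoke the standard perturbation theorem for generators of analytic semigroups. Compactness of the resolvent will then be automatic since $D(\mathcal{A})=D(A)$ and $A^{-1}$ is compact.

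\textbf{Step 1: $A$-boundedness of $B'(u_e)$ with relative bound zero.} Fix $u_e\in D(A)\subset V$, the regular stationary solution produced by the previous theorem. For $v\in D(A)$, combine the formula $B'(u_e)v = B(u_e,v)+B(v,u_e)$ with parts (1a) and (1b) of Theorem \ref{prop b} to get
\begin{equation*}
|B'(u_e)v| \le |B(u_e,v)|+|B(v,u_e)| \le (C_1+C_2)\,|u_e|\,\|v\|.
\end{equation*}
Next I would use the elementary interpolation $\|v\|^{2}=(Av,v)\le |Av|\,|v|$, which is available since $A$ is positive self-adjoint with $D(A^{1/2})=V$. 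Young's inequality then yields, for every $\varepsilon>0$,
\begin{equation*}
|B'(u_e)v| \le (C_1+C_2)|u_e|\,(|Av|\,|v|)^{1/2} \le \varepsilon\,|Av| + C_\varepsilon\,|v|,
\end{equation*}
with $C_\varepsilon = (C_1+C_2)^2|u_e|^2/(4\varepsilon)$. Thus $B'(u_e)$ is $A$-bounded with relative bound $0$.

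\textbf{Step 2: Generation of an analytic semigroup.} Since $A$ is positive, self-adjoint and diagonal on $H$ with eigenvalues $k_0^2\lambda^{2j}\to\infty$, its spectrum lies in $[k_1^2,\infty)$ and $-A$ generates an analytic $C_0$-semigroup of contractions on $H$. The standard perturbation result for generators of analytic semigroups (e.g.\ Pazy, Theorem 3.2.1, or the Kato--Rellich perturbation theorem) says that if $P$ is $A$-bounded with relative bound strictly less than one (in particular, zero), then $-(A+P)$ still generates an analytic $C_0$-semigroup on $H$ with the same domain $D(A)$. Applying this with $P=B'(u_e)$ and the estimate from Step 1 gives that $-\mathcal{A}$ generates an analytic $C_0$-semigroup on $H$ and $D(\mathcal{A})=D(A)$.

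\textbf{Step 3: Compactness of the resolvent.} The operator $A$ is diagonal with eigenvalues $k_j^2\to\infty$, so $A^{-1}$ is a diagonal operator with eigenvalues $k_j^{-2}\to 0$, hence compact on $H$. Equivalently, the embedding $D(A)\hookrightarrow H$ (where $D(A)$ carries the graph norm) is compact. Since Step 2 yields a non-empty resolvent set for $\mathcal{A}$, for any $\lambda\in\rho(\mathcal{A})$ the resolvent $(\lambda I-\mathcal{A})^{-1}$ is a bounded map from $H$ into $D(\mathcal{A})=D(A)$; composed with the compact inclusion $D(A)\hookrightarrow H$ this exhibits $(\lambda I-\mathcal{A})^{-1}$ as a compact operator on $H$. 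By the resolvent identity, compactness at one point of $\rho(\mathcal{A})$ transfers to every point.

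The only nontrivial step is Step 1, i.e.\ squeezing both estimates (1a) and (1b) of Theorem \ref{prop b} through the interpolation inequality to get relative bound zero; everything else is a straightforward application of known abstract theorems, using the diagonal structure of $A$ that is specific to shell models.
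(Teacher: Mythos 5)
Your proof is correct. The paper itself does not argue the lemma at all: it simply cites Proposition 3.1 of Barbu's \emph{Stabilization of Navier--Stokes Flows}, a result stated for the Oseen operator of the Navier--Stokes system rather than for the shell model. What you have done is carry out, in the shell-model setting, the same underlying mechanism that the cited proposition uses: show that the first-order perturbation $B'(u_e)$ is $A$-bounded with relative bound zero and invoke the perturbation theorem for generators of analytic semigroups, then get compactness of the resolvent from the compact embedding $D(A)\hookrightarrow H$. Your Step 1 is the only place where the shell-model structure genuinely enters, and it is handled correctly: parts (1a) and (1b) of Theorem \ref{prop b} give $|B'(u_e)v|\le (C_1+C_2)|u_e|\,\|v\|$ (note only $u_e\in H$ is needed here, which the existence theorem guarantees), and the interpolation $\|v\|^2=(Av,v)\le |Av|\,|v|$ together with Young's inequality converts this into $\varepsilon|Av|+C_\varepsilon|v|$. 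Steps 2 and 3 are standard and correctly executed; in Step 3 the boundedness of $(\lambda I-\mathcal{A})^{-1}$ from $H$ into $D(A)$ with its graph norm follows from the equivalence $|Av|\le (1-\varepsilon)^{-1}(|\mathcal{A}v|+C_\varepsilon|v|)$, which is itself a consequence of your relative-bound estimate. In short, your argument fills a gap the paper leaves to an external citation that does not literally apply, and it does so along the expected lines.
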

\begin{proof}
For the proof we refer Proposition 3.1, \cite{barbu ns}.
\end{proof}
Therefore by Fredlhom-Riesz Theorem, $\A$ has a countable set of eigenvalues $\lambda_j$ and corresponding eigenvectors $\varphi_j$, i.e. 
$$\A \varphi_j= \lambda_j \varphi_j, \ \ j=1,2, \cdots.$$

We know $\A u = A u + B'(u_e)u = A u + B(u,u_e)+B(u_e,u)$ where the linear operator $A:D(A)\rightarrow H$ is defined through its action on the elements of the canonical basis of $H$ as
\begin{equation*}
A\phi_j=k_j^2\phi_j
\end{equation*}
where the eigenvalues $k_j^2$ satisfy relation \eqref{e2}. 

For $u,u_e \in H$ of the form  $u=\sum_{n=1}^{\infty}u_n\phi_n$ and  $u_e=\sum_{n=1}^{\infty}v_n\phi_n$, the bilinear operator $B(u,u_e)$ is defined as 
\begin{equation*}
B(u,u_e)=-i\sum_{n=1}^{\infty}(ak_{n+1}v_{n+2}
 u_{n+1}^*+bk_nv_{n+1}u_{n-1}^*+ak_{n-1}u_{n-1}v_{n-2}+bk_{n-1}v_{n-1}u_{n-2})\phi_n,
\end{equation*}
with the assumption $u_0=u_{-1}=v_0=v_{-1}=0.$ 

Therefore it can be seen easily that the eigenvalues of $\A$ are given by
\begin{align}
\lambda_j&= k_j^2 -i(ak_{n+1}v_{n+2}
 u_{n+1}^*+bk_nv_{n+1}u_{n-1}^*+ak_{n-1}u_{n-1}v_{n-2}+bk_{n-1}v_{n-1}u_{n-2}) \no \\ 
 &\qquad -i(ak_{n+1}u_{n+2} v_{n+1}^*+bk_nu_{n+1}v_{n-1}^*+ak_{n-1}v_{n-1}u_{n-2}+bk_{n-1}u_{n-1}v_{n-2}).
\end{align}
Observe that for each $j=1,2,  \cdots$, the $k_j^2$ are distinct and hence $\lambda_j$ are distinct too and $\sigma(\A)$ is semisimple. Moreover we denote the distinct eigenvectors $\phi_j$ corresponding to $\lambda_j$ as $\varphi_j$.

Let $\A^*$ be the dual operator of $\A$. The eigenvalues of $\A^*$ are $\{\bar{\lambda}_j\}_{j=1}^\infty$. As before $\{\bar{\lambda}_j\}_{j=1}^\infty$ are distinct and the corresponding eigenvectors are 
$$\A^* \varphi_j^*= \bar{\lambda_j} \varphi_j^*, \ \ j=1,2, \cdots.$$
Since $\lambda_j$ are distinct, for a given $\beta >0$, there exist only finite number of eigenvalues such that 
\begin{eqnarray} \label{lambda}
\cdots \geq Re \lambda_{N+1}>\beta>Re \lambda_{N}\geq\cdots \geq Re \lambda_{2} \leq Re \lambda_{1}.
\end{eqnarray}
Thus above discussion leads to following proposition.
\begin{prop}
Since the spectrum $\sigma(\A)$ is semi-simple, so there exists a bi-orthogonal system of eigenfunctions $\{\varphi_j\}_{j=1}^{\infty}$,  $\{\varphi_j^*\}_{j=1}^{\infty}$ such that 
\begin{eqnarray}
\langle \varphi_j, \varphi_i^* \rangle = \delta_{ij}, \quad i,j=1,\cdots,
\end{eqnarray}
and 
\begin{eqnarray}
\A \varphi_j= {\lambda_j} \varphi_j, \ \ j=1,2, \cdots, \qquad \A^* \varphi_j^*= \bar{\lambda_j} \varphi_j^*, \ \ j=1,2, \cdots.
\end{eqnarray}
\end{prop}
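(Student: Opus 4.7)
The plan is to exploit the spectral structure of $\A$ established in Lemma \ref{lem3.4}, combined with the distinctness of eigenvalues observed in \eqref{lambda}, and to recover the biorthogonal dual basis through the action of $\A^*$.

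First I would note that by Lemma \ref{lem3.4} the resolvent $(\lambda I - \A)^{-1}$ is compact, so $\sigma(\A)$ consists entirely of isolated eigenvalues of finite algebraic multiplicity, and the dual operator $\A^*$ has spectrum $\{\bar{\lambda}_j\}_{j=1}^{\infty}$ with identical algebraic multiplicities. The explicit formula for $\lambda_j$ displayed just before the proposition shows that the eigenvalues are pairwise distinct. Combined with the hypothesis that the spectrum is semi-simple, this forces each $\lambda_j$ to be geometrically (and algebraically) simple, so $\ker(\A - \lambda_j I) = \mathrm{span}\{\varphi_j\}$ is one-dimensional, and similarly $\ker(\A^* - \bar{\lambda}_j I) = \mathrm{span}\{\varphi_j^*\}$.

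For the off-diagonal entries, I would apply the usual eigenvalue argument: for $i \neq j$,
\[
\lambda_j \langle \varphi_j, \varphi_i^* \rangle = \langle \A \varphi_j, \varphi_i^* \rangle = \langle \varphi_j, \A^* \varphi_i^* \rangle = \lambda_i \langle \varphi_j, \varphi_i^* \rangle,
\]
and since $\lambda_i \neq \lambda_j$ this immediately yields $\langle \varphi_j, \varphi_i^* \rangle = 0$.

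The main obstacle is the diagonal case: one must verify that $\langle \varphi_j, \varphi_j^* \rangle \neq 0$ so that $\varphi_j^*$ can be rescaled to enforce $\langle \varphi_j, \varphi_j^* \rangle = 1$. This is precisely where semi-simplicity is used decisively. Semi-simplicity of $\lambda_j$ is equivalent to the direct-sum decomposition $H = \ker(\A - \lambda_j I) \oplus \mathrm{Range}(\A - \lambda_j I)$. Since $\ker(\A^* - \bar{\lambda}_j I) = \mathrm{Range}(\A - \lambda_j I)^\perp$, the functional $\varphi_j^*$ vanishes on $\mathrm{Range}(\A - \lambda_j I)$ but cannot vanish on the complementary factor spanned by $\varphi_j$, and hence $\langle \varphi_j, \varphi_j^* \rangle \neq 0$. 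A more conceptual way to package the same argument would be to define the pair $(\varphi_j, \varphi_j^*)$ directly through the rank-one Riesz spectral projections $P_j = \frac{1}{2\pi i} \oint_{\Gamma_j} (\lambda I - \A)^{-1}\, d\lambda$ around each isolated eigenvalue, which automatically produce the primal and dual eigenvectors already in biorthogonal form.
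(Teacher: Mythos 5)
Your argument is correct and is essentially a fleshed-out version of what the paper intends: the paper's own proof is a one-line assertion that the proposition ``follows from the properties of the eigenvalues and eigenvectors proved above,'' namely the distinctness of the $\lambda_j$ and the semi-simplicity of $\sigma(\A)$, which are exactly the ingredients you use (compact resolvent, the pairing identity for $i\neq j$, and rescaling on the diagonal). Your verification that $\langle \varphi_j,\varphi_j^*\rangle \neq 0$ via the decomposition $H=\ker(\A-\lambda_j I)\oplus \mathrm{Range}(\A-\lambda_j I)$ (equivalently, via the rank-one Riesz projections) supplies the one step the paper leaves entirely implicit.
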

\begin{proof}
From the properties of the eigenvalues and eigenvectors of $\A$ and $\A^*$ proved above, proposition follows.
\end{proof}

In the next section our aim is to show that there exists a finite dimensional controller in the feedback form which will stabilize the linearized system. 
\subsection{Internal Stabilization of linearized system}
Let us denote  by $\mathcal{C}$ the following $N \times N$ matrix
\begin{eqnarray}\label{matrix}
\mathcal{C}= [\langle B_1 \varphi_j^*, \varphi_i^* \rangle]_{i=1 j=1}^{N \ \ N},
\end{eqnarray}
which will be useful in the proof of next Theorem. The precise characterization of finite dimensional controller which stabilizes the system is proved in the following theorem.
\begin{thm}\label{thm3.4}
Let $u^0\in H$. Then there exists a controller $U(t)$ of the form
\begin{eqnarray}\label{finite dim control}
U(t)= \sum_{j=1}^N a_j(t) \varphi_j^*, \qquad a_j\in L^2(0,\infty),
\end{eqnarray}
which stabilizes the system \eqref{w=0} with the exponent decay $-\beta$. Moreover the controller $a=\{a_j\}_{j=1}^N$ can be chosen in the feedback form 
\begin{eqnarray}
a_j(t)= -\langle \B_1 \varphi_j^*, \R_0 u^*(t)\rangle, \qquad j=1,\cdots,N, t\geq 0,
\end{eqnarray}
where $\mathcal{R}_0:D(\R_0) \subset H \rightarrow H$ is a riccati operator such that $\R_0=\R_0^*, \ \R_0 \geq 0$ and solves the riccati equation given in Theorem \ref{thm3.2}.
\end{thm}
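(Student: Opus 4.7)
The plan is to follow the spectral-decomposition approach of Barbu--Triggiani. First, I would exploit Lemma \ref{lem3.4} to decompose $H$ via the Riesz spectral projection
\[
P_N = \frac{1}{2\pi i}\int_{\Gamma}(\lambda I-\mathcal{A})^{-1}\,d\lambda,
\]
where $\Gamma$ is a Jordan contour in $\mathbb{C}$ that isolates the finitely many eigenvalues $\lambda_1,\dots,\lambda_N$ satisfying $\mathrm{Re}\,\lambda_j \le \beta$ from the rest of $\sigma(\mathcal{A})$. This yields an $\mathcal{A}$-invariant splitting $H = H_N \oplus H_N^s$ with $H_N = \mathrm{span}\{\varphi_j\}_{j=1}^N$. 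By the spectral mapping theorem for analytic semigroups, the semigroup generated by $-\mathcal{A}$ restricted to $H_N^s$ already decays like $e^{-\beta t}$, so only the finite-dimensional unstable component $u_N = P_N u$ needs active feedback.

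Second, I would establish controllability of the finite-dimensional pair $(\mathcal{A}|_{H_N}, P_N \B_1)$. With the control ansatz $U(t)=\sum_{j=1}^N a_j(t)\varphi_j^*$, the map $(a_1,\dots,a_N) \mapsto P_N \B_1 U$ expressed in the biorthogonal bases $\{\varphi_j\},\{\varphi_j^*\}$ is represented, using $\langle \varphi_k,\varphi_i^*\rangle = \delta_{ik}$, precisely by the matrix $\mathcal{C}$ of \eqref{matrix}. Because the $\lambda_j$ are simple and distinct, the Hautus criterion then reduces to requiring $\mathcal{C}$ to be nonsingular, in which case any desired finite-dimensional feedback on $H_N$ is realisable through such an ansatz.

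Third, I would apply the abstract infinite-dimensional LQR / Riccati theory (to be established in Theorem \ref{thm3.2}) to the shifted system
\[
u'(t) + (\mathcal{A} - \beta I) u(t) = \B_1 U(t)
\]
with quadratic cost $J(u,U) = \int_0^\infty\bigl(|u|^2 + |U|^2\bigr)\,dt$, obtained after the change of variable $v = e^{\beta t}u$. The controllability established in step two on $H_N$, combined with the built-in exponential stability on $H_N^s$, supplies the detectability/stabilizability hypotheses of that theorem, producing a unique self-adjoint nonnegative operator $\R_0$ solving the algebraic Riccati equation together with an optimal feedback of the form $U^*(t) = -\B_1^* \R_0 u^*(t)$. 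Extracting the coefficients of $U^*$ in the frame $\{\varphi_j^*\}_{j=1}^N$ via $\mathcal{C}^{-1}$ yields the stated expression $a_j(t) = -\langle \B_1 \varphi_j^*, \R_0 u^*(t)\rangle$, and the Lyapunov identity associated with the Riccati equation then gives the exponential decay at rate $\beta$ of the closed-loop trajectory.

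Fourth, the \emph{main obstacle} is the rank condition on $\mathcal{C}$ --- equivalently, the unique continuation property that no nonzero element of $\mathrm{span}\{\varphi_j^*\}_{j=1}^N$ lies in $\ker \B_1^*$. For a general $\B_1 \in \mathcal{L}(H,H)$ this must be imposed as a structural assumption; for the sabra shell model the adjoint eigenvectors of $\mathcal{A}^* = A + B'(u_e)^*$ can be computed explicitly from the three-term recursion defining $B$, and one expects to verify the assumption for natural choices of $\B_1$ (for instance, a projection onto a sufficiently rich collection of shells).
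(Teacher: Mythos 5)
Your proposal follows essentially the same route as the paper: the Riesz spectral projection $P_N$ splitting $H$ into the finitely many unstable modes and an already $e^{-\beta t}$-decaying complement, finite-dimensional controllability expressed through the matrix $\mathcal{C}$ of \eqref{matrix} (the paper verifies it via the Kalman observability criterion rather than Hautus, which is equivalent here), and the Riccati/LQR construction of $\R_0$ for the $\beta$-shifted system (the paper works with a finite-horizon cost in Theorem \ref{thm3.2} and extends to infinite horizon later, but this is immaterial). Your step four is, if anything, more careful than the paper, which asserts $\det\mathcal{C}\neq 0$ as a consequence of the eigenvalues being simple and distinct and later treats $\mathcal{C}$ as diagonal; as you correctly observe, nondegeneracy of $\mathcal{C}$ is a genuine structural condition on $\B_1$ relative to the adjoint eigenvectors and must be imposed or verified for the specific choice of $\B_1$.
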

\begin{proof}
Since the eigen values of $\A$ are semi-simple and distinct, so we can easily show that $\text{det} \ \mathcal{C}= \|\langle B_1 \varphi_j^*, \varphi_i^* \rangle\|_{i=1 j=1}^{N \ \ N} \neq 0$. So by Theorem 2.1 of \cite{barbu ns} we conclude that there exists a controller of the form \eqref{finite dim control}.

Let $\Sigma$ be the set of all eigen values of $\A$ and $\Sigma_N=\{\lambda_j\}_{j=1}^N$. Now we decompose the system \eqref{w=0} into two systems, one related to the unstable modes $\Sigma_N$ and the other to the stable modes $\Sigma\setminus \Sigma_N$. For, we write the space $H$ as the direct sum of two invariant subspaces of $\A$ related to $\Sigma_N$ and $\Sigma\setminus \Sigma_N.$ Let $\Gamma_N$ be a positively oriented curve enclosing $\Sigma_N$ but no other point of the spectrum of $\A$. Now let us take, $$H_N=lin \ span\{\varphi_j\}_{j=1}^N.$$
The operator $$P_N:H \rightarrow H_N$$ is defined by
$$P_N=\frac{1}{2\pi i} \int_{\Gamma_N}(\lambda I-\A)^{-1} \d \lambda,$$
where $\Gamma_N$ is a closed curve encloses the eigenvalues $\Sigma_N$.

We write the solution of the system \eqref{w=0} as $u=u_N+u_N^-,$ where $u_N=P_N u$, $u_N^-=(I-P_N)u$ and the operator $\A$ as $\A_{N}=P_N \A$, $\A_{N}^-=(I-P_N)\A$ (see chapter 3, \cite{kato} for the decomposition). Now, we rewrite the system \eqref{w=0} with the controller \eqref{finite dim control} by taking projection on $H_N $ as finite dimensional part,
 \begin{equation} \label{e22}
\frac{d u_N}{dt}+\A_{N}u_N = \sum_{j=1}^N a_j(t) P_NB_1\varphi_j^*, \ u_N(0)=P_Nu^{0},
\end{equation}
and on its orthogonal compliment as infinite dimensional part
 \begin{equation} \label{e23}
\frac{d u_N^-}{dt}+\A_{\N}^-u_N^- = \sum_{j=1}^N a_j(t) (I-P_N)B_1\varphi_j^*, \ u_N^-(0)=(I-P_N)u^{0}.
\end{equation}
Since the spaces $H_N=P_NH$ and $H_N^-=(I-P_N)H$ are invariant under $\A$, so we have $\sigma(\A_N)=\{\lambda_j\}_{j=1}^N$ and $\sigma(\A_N^-)=\{\lambda_j\}_{j=N+1}^\infty.$

We know from \eqref{lambda} that $-\A_N^-$ generates a $C_0$-analytic semigroup on $H_N^-$ and $\sigma(\A_N^-)= \{\lambda: Re \lambda>\beta\}$. This implies that
\begin{eqnarray}
\|e^{-\A_N^-t}\|_{\mathcal{L}(H,H)} \leq C e^{-\beta t}, \qquad \forall t\geq 0.
\end{eqnarray} 
Now we write the solution of the finite dimensional system \eqref{e22} as
$$u_N(t)=\sum_{j=1}^N u_j(t) \varphi_j.$$
Therefore by taking duality pairing with $\varphi_i^*$ for $i=1,\cdots,N$ with all the terms of the system \eqref{e22} we get, for $i=1,\cdots,N$, 
\begin{eqnarray}\label{e25}
\langle  \frac{d u_i(t)}{dt} , \varphi_i^*\rangle +\langle  (\mathbb{A} u(t))_i , \varphi_i^*\rangle= \sum_{j=1}^N a_j(t) \langle B_1\varphi_j^*,\varphi_i^*\rangle, \ u_i^0(0)=\langle u^{0},\varphi_i^*\rangle, 
\end{eqnarray}
where $\mathbb{A}$ is the diagonal matrix 
\begin{align*}
\mathbb{A}&=[\langle\A\varphi_j,\varphi_i^*\rangle]_{i,j=1}^N \\
&= 
  \begin{pmatrix}
    \lambda_1 & 0 & \dots & 0 \\
    0 & \lambda_2 & \dots & 0 \\
    \vdots & \vdots & \ddots & \vdots \\
    0 & 0 & \dots & \lambda_N
  \end{pmatrix}.
\end{align*}
So we have
\begin{eqnarray}\label{27}
\frac{d v}{dt}+\mathbb{A}v(t) = \mathcal{C}a(t), \ v(0)=v^0,
\end{eqnarray}
where $v(t)=\{u_i(t)\}_{i=1}^N$, $v_0=\{u^0_i\}_{i=1}^N$, $a(t)=\{a_i(t)\}_{i=1}^N$ and $\mathcal{C}= [\langle B_1 \varphi_j^*, \varphi_i^* \rangle]_{i=1 j=1}^{N \ \ N}$.

In the next Lemma \ref{l3.5} we will show that \eqref{e22} is exactly null controllable which implies it is exponentially stable i.e.
\begin{eqnarray}\label{e5}
|u_N(t)| \leq C e^{-\beta t}|P_N(u^0)| \leq  C e^{-\beta t}|u^0|,  \qquad \forall t\geq 0.
\end{eqnarray} 

Hence by Kalman controllability Theorem (see Theorem 2.1, \cite{ben4}) there exists a vector $a=\{a_j\}_{j=1}^N \subset L^2(0,T;\mathbb{C}^N)$ such that 
$$u_N(T)=0,$$
where $T>0$ is a fixed time. Without loss of generality,  we can assume that $a_j(t)=0, \ \forall \; t \geq T.$ 
Now, from \eqref{e23} by substituting the controller $U(t)= \sum_{j=1}^N a_j(t) \varphi_j^*$ and using the fact that $\sigma(\A_N^-)=\{\lambda_j\}_{j=N+1}^\infty$ and $-\A_N^-$ generates a $C_0$-semigroup we can deduce that
\begin{align}
|u_N^-(t)| &\leq  |e^{-\A_N^- t}||(I-P_N)u^0| +  \int_0^T |e^{-\A_N^- (t-s)}|\left( \sum_{j=1}^N |a_j(s)| \right) \d s \no \\
&\leq C e^{-\beta t}|u^0| + C \int_0^T e^{-\beta (t-s)}\left( \sum_{j=1}^N |a_j(s)| \right) \d s \no \\
&\leq C e^{-\beta t}|u^0|+Ce^{-\beta t}\|a\|_{L^2(0,T;\mathbb{C}^N)} , \qquad \forall t\geq 0, \no
\end{align}
which can be made less than $C e^{-\beta t}|u^0|$ by choosing the controller  
\begin{eqnarray}\label{28}
\int_0^T |a(t)|^2 \d t \leq C|P_Nu^0|^2 \leq C|u^0|^2.
\end{eqnarray}
Finally we obtain
\begin{eqnarray}\label{e6}
|u_N^-(t)| \leq   C e^{-\beta t}|u^0|,  \qquad \forall t\geq 0.
\end{eqnarray}
Therefore from \eqref{e5} and \eqref{e6} by adding the finite dimensional and infinite dimensional system we get
\begin{eqnarray}\label{e7}
|u(t)| \leq C e^{-\beta t}|u^0| , \qquad \forall t\geq 0.
\end{eqnarray}
\end{proof}
Now we will prove the lemma which we have used in the proof of the Theorem \eqref{thm3.4}.
\begin{lem}\label{l3.5}
The system \eqref{27} is exactly null controllable.  i.e. there exists a controller $a=\{a_j\}_{j=1}^N \subset L^2(0,T;\mathbb{C}^N)$ such that $u_i(T)=0, \ i=1, \cdots, N$ for a fixed $T>0$. 
\end{lem}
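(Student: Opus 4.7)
The plan is to reduce the claim to the classical Kalman rank condition for finite-dimensional linear time-invariant systems. Since \eqref{27} is an ODE on $\mathbb{C}^N$ with constant matrices $\mathbb{A}$ and $\mathcal{C}$, exact null controllability at any fixed time $T>0$ is equivalent to (equivalent to) controllability of the pair $(\mathbb{A},\mathcal{C})$, which in turn is equivalent to the Kalman matrix
\[
\bigl[\,\mathcal{C},\ \mathbb{A}\mathcal{C},\ \mathbb{A}^{2}\mathcal{C},\ \ldots,\ \mathbb{A}^{N-1}\mathcal{C}\,\bigr]
\]
having rank $N$.

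The crucial observation is that the rank condition is already trivial here. As noted at the beginning of the proof of Theorem \ref{thm3.4}, the distinctness and semi-simplicity of $\{\lambda_j\}_{j=1}^N$ together with the biorthogonality of $\{\varphi_j\}$ and $\{\varphi_j^*\}$ yield $\det \mathcal{C}\neq 0$, i.e.\ the $N\times N$ matrix $\mathcal{C}$ is itself invertible. Hence the very first block of the Kalman matrix already has rank $N$, so $(\mathbb{A},\mathcal{C})$ is controllable. One could alternatively invoke the Hautus test: for every eigenvalue $\lambda$ of $\mathbb{A}$, $\operatorname{rank}[\lambda I-\mathbb{A},\,\mathcal{C}]=N$ holds automatically since $\mathcal{C}$ alone has full rank.

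To make the control explicit and obtain the $L^2$-estimate \eqref{28} needed downstream, fix $T>0$ and set
\[
a(t) \;=\; -\,\mathcal{C}^{*}\,e^{-\mathbb{A}^{*}(T-t)}\,W(T)^{-1}\,e^{-\mathbb{A} T}\,v^{0}, \qquad t\in[0,T],
\]
where $W(T)=\int_{0}^{T}e^{-\mathbb{A} s}\mathcal{C}\mathcal{C}^{*}e^{-\mathbb{A}^{*} s}\,\mathrm{d}s$ is the controllability Gramian. Controllability of $(\mathbb{A},\mathcal{C})$ guarantees $W(T)>0$, hence invertibility; the variation-of-constants formula then gives $v(T)=0$ by direct verification, and boundedness of the integrand together with $|v^{0}|\le|u^{0}|$ yields $\|a\|_{L^{2}(0,T;\mathbb{C}^{N})}\le C|v^{0}|\le C|u^{0}|$, which is exactly \eqref{28}. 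Since the resulting $a(t)$ lies in $L^{2}(0,T;\mathbb{C}^{N})$, extending by zero for $t\geq T$ completes the construction.

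No substantive obstacle is expected: once the invertibility of $\mathcal{C}$ is granted (which the paper takes as immediate from the eigenstructure of $\mathcal{A}$ and the injectivity properties of $B_1$), the lemma reduces to standard linear systems theory, essentially the same argument used in Theorem 2.1 of \cite{ben4} cited in the proof of Theorem \ref{thm3.4}.
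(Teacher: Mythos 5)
Your proof is correct and follows essentially the same route as the paper: both reduce the lemma to classical finite-dimensional controllability of the constant-coefficient pair $(\mathbb{A},\mathcal{C})$ and ultimately rest on the non-degeneracy of $\mathcal{C}$ asserted at the start of the proof of Theorem \ref{thm3.4}. If anything, your version is slightly cleaner: invoking $\det\mathcal{C}\neq 0$ directly in the Kalman rank (or Hautus) test bypasses the paper's intermediate passage through the observability condition, the linear independence of the exponentials $e^{\lambda_j t}$, and its unsupported remark that $\mathcal{C}$ is diagonal, while your Gramian construction additionally supplies the estimate \eqref{28} that the paper uses later without derivation.
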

\begin{proof}
We know that by the Kalman controllability Theorem (Theorem 2.1, \cite{ben4}) finite dimensional system \eqref{e25} is exactly controllable iff 
\begin{eqnarray}\label{e29a}
\mathcal{C}^* e^{\mathbb{A}t}z=0, \qquad \forall t\geq 0 \Rightarrow z=0.
\end{eqnarray}
From the definition of $\mathcal{C}$ in \eqref{matrix} we have $\mathcal{C}^*=[\varphi_j^*,\B_1^* \varphi_i^*]_{i=1 j=1}^{N \ \ N}$ and 
$$
e^{\mathbb{A}t} = \begin{pmatrix}
    e^{\lambda_1t} & 0 & \dots & 0 \\
    0 & e^{\lambda_2t} & \dots & 0 \\
    \vdots & \vdots & \ddots & \vdots \\
    0 & 0 & \dots & e^{\lambda_Nt}
  \end{pmatrix} , z= \begin{bmatrix}
z_1 \\
\vdots \\
z_N
\end{bmatrix}_{1 \times N}.
$$
Therefore from \eqref{e29a} we get for each $i=1, \cdots, N$,
\begin{eqnarray*}
 c_{i1} e^{\lambda_1 t} z_1 + c_{i2} e^{\lambda_2 t}z_2 + \cdots +  c_{iN} e^{\lambda_1 t}  z_N = 0, \quad \forall t\geq 0,
\end{eqnarray*}
where $c_{ij}=\langle \varphi_i^* , \B_1 \varphi_j^* \rangle.$
This implies for each $i=1, \cdots, N$,
\begin{eqnarray*}
c_{i1} z_1 + c_{i2}z_2 + \cdots +  c_{iN} z_N = 0, \quad \forall t\geq 0.
\end{eqnarray*}
Since we know that $\mathcal{C}$ is a diagonal matrix we get $c_{ii}z_i=0, \quad \forall i=1, \cdots, N.$ Therefore we can conclude that $z_i=0, \quad \forall \; i=1, \cdots, N,$ which implies $z=0$.
\end{proof}

Now our aim is to write the controller in a feedback form which will stabilize the system \eqref{w=0}. 
\begin{thm}\label{thm3.2}
Let $\beta>0$ and $N$ be as defined in Theorem \eqref{thm3.4}. Then there exists a linear self-adjoint operator $\R_0:D(\R_0) \subset H \rightarrow H$ where $\R_0=\R_0^*, \ \R_0 \geq 0$ such that  
\begin{eqnarray}\label{i23}
b_1 |u^0|^2 \leq (\R_0u^0,u^0) \leq b_2 |u^0|^2, \quad \forall u^0 \in H,
\end{eqnarray}
for some constants $b_1,b_2 >0.$ Moreover 
\begin{eqnarray}\label{i12}
|\R_0u| \leq C\|u\|, \qquad \forall u\in V,
\end{eqnarray}
\begin{equation}\label{i13}
(\A u, \R_0u ) + \frac{1}{2} \sum_{j=1}^N (\B_1\varphi_j^*,\R_0u)^2 = \frac{1}{2} |A^{1/2}u|^2, \qquad u\in D(A).
\end{equation}
The feedback controller 
\begin{equation}
U(t)=  \sum_{j=1}^N (\B_1\varphi_j^*,\R_0u(t))\varphi_j^*
\end{equation}
exponentially stabilizes the linearized system \eqref{w=0}, i.e., the solution $u$ to corresponding closed loop system satisfies
\begin{eqnarray}\label{i20}
\int_0^T e^{2\beta t} |A^{1/2}u(t)|^2 \d t \leq C |u^0|^2 .
\end{eqnarray}
Moreover
\begin{eqnarray}\label{i32}
|u(t)| \leq Ce^{-\beta t}|A^{1/4}u^0|, \qquad u^0 \in H, \; \mbox{a.e.} \; t > 0.
\end{eqnarray}
\end{thm}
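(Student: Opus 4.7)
The natural route is to construct $\R_0$ as the value operator of an infinite-horizon linear--quadratic (LQR) problem associated with \eqref{w=0}. Introduce the cost
$$\Phi(u^0, a) = \frac{1}{2}\int_0^\infty e^{2\beta t}\Big(|A^{1/2}u(t)|^2 + \sum_{j=1}^N |a_j(t)|^2\Big)\,\mathrm{d}t,$$
with $a \in L^2(0,\infty;\mathbb{C}^N)$ and $u$ solving \eqref{w=0} driven by $U(t) = \sum_j a_j(t)\varphi_j^*$. The substitution $v(t) = e^{\beta t}u(t)$, $b(t) = e^{\beta t}a(t)$ reduces this to a standard unweighted LQR problem with dynamics $\dot v + (\A - \beta I)v = \sum_j b_j\B_1\varphi_j^*$ and cost $\frac{1}{2}\int_0^\infty(|A^{1/2}v|^2 + |b|^2)\,\mathrm{d}t$. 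Finiteness of the infimum is ensured by Theorem \ref{thm3.4}: inserting the stabilizing controller constructed there, together with the analytic-semigroup estimate $|A^{1/2}e^{-\A t}u^0| \leq Ct^{-1/2}|u^0|$ from Lemma \ref{lem3.4}, gives $\Phi(u^0, a) \leq C|u^0|^2$.

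Strict convexity and coercivity (after the $\beta$-shift) produce a unique minimizer $(u^*, a^*)$. The value function $V(u^0) := \inf_a \Phi(u^0, a)$ is then a continuous, nonnegative quadratic form on $H$, so there exists a unique self-adjoint $\R_0 \geq 0$ with $V(u^0) = \frac{1}{2}(\R_0 u^0, u^0)$; the upper bound in \eqref{i23} is exactly the finiteness above. For the lower bound $(\R_0 u^0, u^0) \geq b_1|u^0|^2$, one argues by contradiction: if $V(u_n^0)/|u_n^0|^2 \to 0$ along normalized initial data, then both the observation $A^{1/2}u_n^*$ and the control $a_n^*$ tend to zero in $L^2$, forcing $u_n^* \to 0$ on $[0,\infty)$, hence $u_n^0 \to 0$, a contradiction.

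Dynamic programming then yields the algebraic Riccati equation \eqref{i13} and identifies the optimum in feedback form, giving exactly the controller of the statement. The regularity $|\R_0 u| \leq C\|u\|$ in \eqref{i12} is read off from maximal $L^2$-parabolic regularity for the optimal closed-loop system started at $u^0 \in V$. Substituting the feedback into \eqref{w=0}, pairing with $\R_0 u$, and invoking the Riccati identity produces the Lyapunov relation
$$\frac{d}{dt}(\R_0 u, u) + 2\beta(\R_0 u, u) + |A^{1/2}u|^2 + \sum_{j=1}^N (\B_1\varphi_j^*, \R_0 u)^2 = 0,$$
from which Gronwall and the coercivity bounds give \eqref{i20}; a further analyticity argument (trading integrated regularity for pointwise control) promotes this to the pointwise decay \eqref{i32} with dependence on $A^{1/4}u^0$.

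The chief technical obstacle is the coercivity $(\R_0 u^0, u^0) \geq b_1|u^0|^2$: this is an observability statement for the triple $(\A,A^{1/2},\{\B_1\varphi_j^*\}_{j=1}^N)$. The decomposition $H = H_N \oplus H_N^-$ from Theorem \ref{thm3.4} is crucial. On $H_N^-$ the operator $A^{1/2}$ is already coercive (eigenvalues $k_j \to \infty$), while on the finite-dimensional unstable part $H_N$ one must invoke the null-controllability proved in Lemma \ref{l3.5} to rule out quantitatively the existence of trajectories on which both the observation and all the control coordinates vanish simultaneously.
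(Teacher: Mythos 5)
Your proposal follows essentially the same route as the paper: $\R_0$ is constructed as the value operator of a quadratic optimal control problem for the $\beta$-shifted linearized system with controls of the form $\sum_j a_j \B_1\varphi_j^*$, finiteness of the value is obtained by inserting the stabilizing controller of Theorem \ref{thm3.4}, the Riccati equation \eqref{i13} is derived by dynamic programming, and the estimates \eqref{i20} and \eqref{i32} follow from pairing the closed-loop equation with $\R_0 u$ and using the Riccati identity as a Lyapunov relation. The only cosmetic differences are that the paper poses the problem on a finite horizon $[0,T]$ with the shift $-\beta u$ built into the dynamics (rather than your weighted infinite-horizon cost, which you correctly reduce to the same shifted system), and it obtains the lower bound in \eqref{i23} directly from the coercivity $(Au,u)\geq C|u|^2$ rather than via your observability-by-contradiction argument.
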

\begin{proof}
The proof is similar to \cite{barbu ns}, \cite{BT} which deals with stabilization of Navier Stokes equations. We associate a finite time horizon minimization problem as below:
\begin{align}
\varphi(u^0)=\inf_{U\in L^2(0,T;\mathbb{C}^N)}  \int_0^T |A^{1/2}u(t)|^2 + |U(t)|_N^2 \d t \label{i7} \\
\text{ subject to,   } \frac{du}{dt}+\A u - \beta u= \sum_{j=1}^N U_j B_1 \varphi_j^*, \ u(0)=u^0, \label{i8}
\end{align}
where $a_j$ is denoted by $U_j$ for $j=1,\cdots,N.$

Let us define $SU:=\sum_{j=1}^N U_j B_1 \varphi_j^*.$ 
We will first show that  $\forall u^0\in H$, $\varphi(u^0)<\infty. $ \\
For, from the Theorem \eqref{thm3.4} there exists an admissible pair $(u,U) \in  (C([0,T);H) \cap L^2_{\text{loc}}(0,T;D(A))) \times L^2(0,T;\mathbb{C}^N)$ which solves above optimal control problem. Thus $(u,U) $ is the optimal pair which solves \eqref{i7}.

By taking inner product of \eqref{i8} with $u$ and using the fact that $\A u=\nu Au+B(u,u_e)+B(u_e,u)$ we get
\begin{align*}
\frac{1}{2}\frac{\d}{\d t} |u|^2  + \nu |A^{1/2}u|^2 &\leq \beta |u|^2 +b(u,u_e,u) + (SU,u) \\
&\leq \beta |u|^2 + C_3 |u|\|Au_e\||A^{1/2}u| + \frac{1}{4}|U(t)|_N^2 +  \frac{1}{4}|u|^2 \\
\frac{1}{2}\frac{\d}{\d t} |u|^2  + \frac{\nu}{2} |A^{1/2}u|^2 &\leq  \left( \beta +\frac{2C_3^2}{\nu} |Au_e|^2+ \frac{1}{4} \right)|u(s)|^2 + \frac{1}{4}|U(t)|^2 .
\end{align*}
Integrating over $0$ to $T$ we get
\begin{align*}
 &|u(t)|^2 + \nu \int_0^T |A^{1/2} u(s)|^2 \d s \\
 &\qquad \leq |u^0|^2+ \int_0^T \left( 2 \beta +\frac{C_3^2}{\nu} |Au_e|^2+ \frac{1}{2} \right)|u(s)|^2 \d s+ \frac{1}{2} \int_0^T |U(s)|^2 \d s .
\end{align*}
Using Gronwall's Lemma we get
\begin{align*}
 &|u(t)|^2  \leq e^{CT}|u^0|^2+ \frac{1}{2} \int_0^T e^{Cs} |U(s)|^2 \d s \leq C|u^0|^2 , \ \forall t \in [0,T].
\end{align*}
So we have 
\begin{align}\label{i9}
\varphi(u^0) \leq b_2\;  |u^0|^2, \qquad \forall u^0\in H.
\end{align}
Moreover by using the property $ ( Au^0, u^0) \geq C | u^0|^2; \; \forall \; u^0 \in H $, we can prove that
\begin{align*}
|u^0|^2 \leq \int_0^T (|A^{1/2}u(t)|^2+|U(t)|_N^2) \d t.
\end{align*}
Thus,
\begin{align}\label{i10}
b_1|u^0|^2 \leq \varphi(u^0) , \qquad \forall u^0\in H.
\end{align}
Combining \eqref{i9} and \eqref{i10} we conclude
\begin{align}\label{i11}
b_1|u^0|^2 \leq \varphi(u^0)\leq b_2|u^0|^2 , \qquad \forall u^0\in H.
\end{align}
Therefore by using Theorem 3.1, \cite{ben4} we can conclude that there exists a linear self adjoint operator $\R_0:D(\R_0) \subset H \rightarrow H$ which is the Gateaux derivative of the function $\varphi(u^0)$ on $H$ such that
\begin{eqnarray} \label{i15}
\varphi(u^0) = \frac{1}{2}(\R_0u^0,u^0), \qquad \forall u^0\in H.
\end{eqnarray}
Let us take $ u^0 \in V $. So we have by Theorem 4, \cite{PBT} the solution $u \in C([0,T];V) \cap L^2(0,T;D(A))$ for any $T>0.$ By dynamic programming principle (see Barbu \cite{barbu}, Theorem 2.1), we know that for each $T>0$, the solution of \eqref{i7}-\eqref{i8} i.e. $(u^*,U^*)$ is also the optimal solution to the minmiization problem 
\begin{align}
\inf_{U\in L^2(0,T;\mathbb{C}^N)} \{ \frac{1}{2}\int_0^T (|A^{1/2}u(t)|^2 + |U(t)|_N^2) \d t + \varphi(u(T)), \text{subject to } \eqref{i8}\} \no.
\end{align}
Therefore by Pontryagin maximum principle we get
\begin{eqnarray}\label{e31}
U^*(t)=\{( p_T(t),\B_1\varphi_j^*)\}_{j=1}^N  , \ \text{ a.e. } t\in(0,T),
\end{eqnarray}
where $p_T$ is the solution of
\begin{eqnarray}\label{adjoint1}
-\frac{d z}{dt}+\A^*z -\beta z = -A u^*, \ z(T)=-\R_0 u^*(T).
\end{eqnarray}
We further conclude that 
\begin{equation}
\R_0 u^*(t)=-p_T(t), \qquad \forall t \geq [0,T].
\end{equation}
Hence from \eqref{e31}
\begin{equation}
U^*(t)=-\{( \R_0 u^*(t),\B_1 \varphi_j^*)\}_{j=1}^N, \qquad \forall t \geq [0,T].
\end{equation}
Now we will prove \eqref{i12}. For, if we take $u^0\in V$ then by existence theorem (see \cite{PBT}, Theorem 4) we can conclude that the optimal control $U^* \in L^2(0,T;H)$ and $u^* \in C([0,T];V) \cap L^2(0,T;D(A)) .$ Notice that \eqref{adjoint1} is a linear system. So an easy calculation gives that $z \in C([0,T];H) \cap L^\infty(0,T;V).$ Thus $z(0)= \R_0u^0 \in H$. Therefore by closed graph theorem we get $$|\R_0u|\leq \|u\|, \quad u\in V.$$
Now it is left to show that the operator $\R_0$ satisfies the algebraic riccati equation \eqref{i13}. By dynamic programming principle and \eqref{i15} we have 
\begin{align}\label{i16}
\frac{1}{2} (\R_0u^*(t),u^*(t)) = \varphi(u^*(t))= \frac{1}{2}\int_t^T (|A^{1/2}u^*(s)|^2 + |U^*(s)|_N^2) \d s, \qquad \forall t \geq 0, 
\end{align}
where $u^*(t) \in H$. Now differentiating \eqref{i16} and using self adjoint property of $\R_0$ on $H$ we get
\begin{align*}
( \R_0u^*(t),\frac{d u^*(t)}{d t}) = -\frac{1}{2} |A^{1/2}u^*(t)|^2 -\frac{1}{2} |U^*(t)|_N^2  .
\end{align*}
Using \eqref{i8} and \eqref{e31} we get
\begin{align}\label{i31}
&( \R_0 u^*(t),-\A u^*+\beta u  + U^*) + \frac{1}{2} |A^{1/2}u^*(t)|^2 + \frac{1}{2} |U^*(t)|_N^2  =0 \no \\
&(\R_0 u^*(t),-\A u^*+\beta u^* + U^* )+ \frac{1}{2} |A^{1/2}u^*(t)|^2  + \frac{1}{2} \sum_{j=1}^N ( \R_0 u^*(t),\B_1 \varphi_j^*)^2=0 \no \\
&( \R_0 u^*(t),\A u^* +\beta u^*) + \frac{1}{2} \sum_{j=1}^N ( \R_0 u^*(t),\B_1 \varphi_j^*)^2=\frac{1}{2} |A^{1/2}u^*(t)|^2 \d s,
\end{align}
for all $t\geq 0.$  
%Let us define $\A_{\R_0}:D(A)  \subset H \rightarrow H$ as $\A_{\R_0}u= \A -\beta +\sum_{j=1}^N ( \R_0 u,\B_1 \varphi_j^*)\B_1\varphi_j^*$. From we know that $\A_{\R_0}$ is a infinitesimal generator of an analytic semigroup. 
To prove \eqref{i20} and \eqref{i32} let us take the closed loop system
\begin{equation}\label{i18}
\frac{du}{dt} + \A u -\beta u +\sum_{j=1}^N ( \R_0 u,\B_1 \varphi_j^*)\B_1\varphi_j^*=0, \quad u(0)= u^0.
\end{equation}
Taking inner product of \eqref{i18} with $\R_0 u$ and using the riccati equation \eqref{i31} we get
\begin{align}
\frac{1}{2}\frac{d}{dt} (\R_0 u,u) &= ( \R_0 u,\frac{d u}{dt})\no \\
&=(\R_0u,\beta u) - (\R_0u,B(u)) -\frac{1}{2}\sum_{j=1}^N(\R_0 u, B_1\varphi_j^*)^2 -\frac{1}{2}|A^{1/2}u|^2 .
\end{align} 
This implies
\begin{align}
\frac{d}{dt} (\R_0 u,u)-2\beta (\R_0 u,u)+ |A^{1/2}u|^2 + \sum_{j=1}^N(\R_0 u, B_1\varphi_j^*)^2 \leq 0.
\end{align}
Integrating over $0$ to $t$ we get
 \begin{eqnarray}\label{i17}
(\R_0 u(t),u(t)) +  \int_0^t e^{-2\beta s} |A^{1/2}u(s)|^2 \d s \leq (\R_0 u^0,u^0) \leq b_2|u^0|^2.
\end{eqnarray}
Therefore we conclude
\begin{eqnarray}
\int_0^t e^{-2\beta s} |A^{1/2}u(s)|^2 \d s \leq C |u^0|^2,  \ \forall t \in [0,T].
\end{eqnarray}
From \eqref{i17} we further get
\begin{eqnarray}\label{i19}
|u(t)|^2 \leq \frac{1}{b_1}(\R_0 u(t),u(t)) \leq C e^{-2\beta t}| u^0|^2,  \  \forall t \in [0,T].
\end{eqnarray}
This completes the proof.
\end{proof}

\subsection{Internal stabilization of Nonlinear system}
\begin{thm}
The feedback controller 
\begin{equation}
U(t)=  -\sum_{j=1}^N (\B_1\varphi_j^*,\R_0(u-u_e))\varphi_j^*
\end{equation}
will exponentially stabilize the system \eqref{i1} to the solution $ u^e$ of the steady state system \eqref{steady} in a neighbourhood of $u^e$:
$$\mathcal{X}=\{u^0 \in H : \quad |u^0 - u^e | < \rho \}$$
of $u_e$ for some $\rho>0$.  
Moreover if $\rho$ is sufficiently small, then for each $u^0 \in \mathcal{X}$, the solution $u \in C([0,T);H) \cap L^2(0,T;V)$ to corresponding closed loop system 
\begin{equation}
\frac{du}{dt}+A u + B(u)+ \sum_{j=1}^N (\B_1\varphi_j^*,\R_0(u-u_e))\varphi_j^*= f , \ u(0)=u^0
\end{equation}
satisfies
\begin{eqnarray}
\int_0^T e^{2\beta t} |A^{1/2}(u(t)-u_e)|^2 \d t \leq C |(u^0-u_e)|^2,
\end{eqnarray}
and
\begin{eqnarray}
|(u(t)-u_e)| \leq Ce^{-\beta t}|(u^0-u_e)|, \qquad u^0 \in H.
\end{eqnarray}
\end{thm}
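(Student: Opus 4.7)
The plan is to reduce the nonlinear closed-loop equation to a perturbation of the closed-loop linearized system of Theorem \ref{thm3.2}, use $(\R_0 y, y)$ as a Lyapunov functional on the shifted variable $y := u - u_e$, and then absorb the resulting cubic remainder via a smallness/continuation argument.

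The first step is the reduction. Setting $y := u - u_e$ and subtracting the steady-state equation \eqref{steady}, the expansion $B(u_e + y, u_e + y) = B(u_e, u_e) + B'(u_e)y + B(y,y)$ (item 6 of Theorem \ref{prop b}) together with $\A = \nu A + B'(u_e)$ converts the closed-loop equation into
\[
\frac{dy}{dt} + \A y + \sum_{j=1}^N (\B_1 \varphi_j^*, \R_0 y)\, \B_1 \varphi_j^* = -B(y,y), \qquad y(0) = u^0 - u_e.
\]
This is exactly the closed-loop linear system \eqref{i18} forced by the cubic term $-B(y,y)$; local existence of $y \in C([0,T);H) \cap L^2_{\mathrm{loc}}(0,T;V)$ follows by a Galerkin argument as in Theorem \ref{weak}, the feedback term being a bounded perturbation on $H$ by \eqref{i12}.

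Next, using $(\R_0 y, y)$ as a Lyapunov functional and repeating the computation that led to \eqref{i17} (exploiting the self-adjointness of $\R_0$ and the $\beta$-shifted Riccati identity invoked in \eqref{i31}), I would obtain the energy identity
\[
\frac{d}{dt} (\R_0 y, y) + 2\beta (\R_0 y, y) + |A^{1/2} y|^2 + \sum_{j=1}^N (\B_1 \varphi_j^*, \R_0 y)^2 = -2 (B(y,y), \R_0 y).
\]
The left-hand side reproduces the decay-plus-dissipation structure of the linear case; everything now hinges on absorbing the cubic remainder on the right.

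The main obstacle is controlling this cubic term uniformly along the trajectory. Combining the bilinear estimate $|B(y,y)| \le C_1 |y|\, \|y\|$ from Theorem \ref{prop b}(1)(a) with the smoothing bound $|\R_0 y| \le C\|y\|$ from \eqref{i12} yields
\[
|(B(y,y), \R_0 y)| \le K\, |y|\, |A^{1/2} y|^2,
\]
so that as long as $|y(t)| \le \rho_0 := 1/(4K)$, the identity collapses to $\tfrac{d}{dt}(\R_0 y, y) + 2\beta (\R_0 y, y) + \tfrac12 |A^{1/2} y|^2 \le 0$. Gronwall combined with \eqref{i23} then gives $|y(t)|^2 \le (b_2/b_1)\, e^{-2\beta t}\, |y^0|^2$ on any interval where the smallness persists. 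To globalize, I would pick $\rho$ with $(b_2/b_1)^{1/2}\rho < \rho_0$ and run the standard continuation argument: for $|y^0| < \rho$, set $T^* := \sup\{T \ge 0 : |y(t)| \le \rho_0 \text{ for all } t \in [0,T]\}$; the estimate just proved gives $|y(t)| < \rho_0$ strictly on $[0,T^*)$, so continuity forces $T^* = \infty$. Retaining the dissipation term and integrating against $e^{2\beta t}$ as in the proof of \eqref{i20} delivers the weighted bound in the statement, and the pointwise exponential decay is immediate from the Gronwall step. The two fine points to watch are that the Riccati identity must be used in its $\beta$-shifted form \eqref{i31} rather than in the form literally displayed in \eqref{i13}, and that its pointwise use along the nonlinear trajectory requires a routine approximation argument since $y(t) \in D(A)$ only for a.e.\ $t$.
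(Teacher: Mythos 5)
Your proposal is correct and follows essentially the same route as the paper: after shifting by $u_e$ you use $(\R_0 y,y)$ as a Lyapunov functional, invoke the $\beta$-shifted Riccati identity to get the decay-plus-dissipation structure, bound the cubic remainder by $K|y|\,|A^{1/2}y|^2$ via Theorem~\ref{prop b}(1)(a) and \eqref{i12}, and absorb it by smallness of the initial data. The only difference is technical bookkeeping: the paper justifies the pointwise computation by working with the truncated operator $B_N$ and strong solutions of \eqref{i22}, then passing to the limit, whereas you run the estimate directly on the trajectory with an explicit continuation argument and defer the regularization to a ``routine approximation''; both are standard and your continuation step is, if anything, stated more carefully than the paper's choice of $\rho$.
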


\begin{proof}
The proof is similar to Theorem 3.4 \cite{barbu ns}, Theorem 2.2 \cite{BT}.
Consider the system satisfied by $(u-u_e,u^0-u_e)$ but still denoted by $(u,u^0)$ we get
\begin{equation}\label{i21}
\frac{du}{dt}+ \nu A u + B'(u_e)u+B(u)+ \sum_{j=1}^N(\R_0 u, B_1\varphi_j^*)B_1 \varphi_j^*=0, \ u(0)=u^0.
\end{equation}
The problem reduces to proving the stability of the null solution of the closed loop system \eqref{i21}.
Next our aim is to show that $\varphi(u)= \frac{1}{2}(\R_0u,u)$ is a Lyapunov function for the system \eqref{i21} in a neighborhood of the origin.

By the Theorem 1.18, \cite{barbu ns} the system \eqref{i21} has at least one weak solution $u$ which is the limit of the strong solution $u_N$ to the system
\begin{equation}\label{i22}
\frac{du_N}{dt}+ \nu A u_N + B'(u_e)u_N +B_N(u_N)+ \sum_{j=1}^N(\R_0 u_N, B_1\varphi_j^*)B_1 \varphi_j^*=0, \ u_N(0)=u^0,
\end{equation}
where $B_N$ is the truncated operator
$B_N ( \cdot ):V \longrightarrow V^{'},$ 
\[B_N ( \cdot ) :=
  \begin{cases}
    B(u)      & \quad \text{if }\|u\| \leq N\\
    {\left( \frac{N}{ \|u\|} \right) }^{2}B(u)  & \quad \text{if } \|u\| > N.\\
  \end{cases}
\]
Similarly from Theorem 4.1 and Theorem 4.2 of \cite{BD} we can show that if $u^0 \in D(A)$ then $u_N \in \cap W^{1,\infty}_{\text{loc}}(0,\infty;H) \cap  L^{\infty}_{\text{loc}}(0,\infty;D(A))$ and if $u^0 \in V$ then $u_N \in W^{1,2}_{\text{loc}}(0,\infty;H) \cap  L^{2}_{\text{loc}}(0,\infty;D(A)) \cap C([0,T];V).$

Now from Theorem 1.18, \cite{barbu ns} we conclude $u_N \rightarrow u$ strongly in $L^2(0,T;H)$ and weakly in $L^2(0,T;V)$. Using the riccati equation \eqref{i18} in \eqref{i22} we obtain
\begin{align}\label{i29}
\frac{1}{2}\frac{d}{dt} &( \R_0 u_N,u_N) 
= \langle \R_0 u_N,\frac{d u_N}{dt}\rangle \no \\
=& -\beta (\R_0u_N,u_N) - (B_N(u_N),\R_0u_N) -\frac{1}{2}\sum_{j=1}^N(\R_0 u_N, B_1\varphi_j^*)^2  -\frac{1}{2}|A^{1/2}u_N|^2 .
\end{align} 
Now we have for all $t \geq 0$,
\begin{align}\label{i24}
(B_N(u_N),\R_0u_N)=b(u_N,u_N,\R_0u_N) &\leq \min \left(1,\frac{N^2}{\|u_N\|^2}\right) C_1 |u_N| \|u_N\||\R_0u_N| \no\\
&\leq C|u_N| \|u_N\|^2. 
\end{align}
By using \eqref{i23} we further get for each $t \geq 0$,
\begin{align}\label{i26}
(B_N(u_N),\R_0u_N) \leq C (\R_0 u_N,u_N)^{1/2}\|u_N\|^2 .
\end{align}
Using \eqref{i26} in \eqref{i29} we get
\begin{align}\label{i27}
\frac{d}{dt} (\R_0 u_N,u_N)+ 2\beta (\R_0u_N,u_N) +\frac{1}{2} |A^{1/2}u_N|^2 \leq \left(C (\R_0 u_N,u_N)^{1/2} -\frac{1}{2}\right) |A^{1/2}u_N|^2.
\end{align}
Now if we choose $\rho \leq (\frac{1}{2C})^2$, then $(\R_0 u_N,u_N) < \rho $. Then from \eqref{i23} we conclude $|A^{1/4}u^0|^2 < \rho,$ and from \eqref{i27} we get
\begin{eqnarray}\label{i28}
\frac{d}{dt} (\R_0 u_N,u_N)+ 2\beta (\R_0u_N,u_N) +\frac{1}{2} |A^{1/2}u_N|^2 \leq 0.
\end{eqnarray}  
By integrating over $0$ to $t$ we have
\begin{eqnarray}\label{i30}
e^{2\beta t}(\R_0 u_N(t),u_N(t)) + \frac{1}{2} \int_0^t e^{2\beta s} |A^{1/2}u_N(s)|^2 \d s \leq (\R_0 u^0,u^0) \leq C|u^0|^2,
\end{eqnarray}
for all $t \in [0,T]$. So we have
\begin{eqnarray}
\int_0^T e^{2\beta s} |A^{1/2}u_N(s)|^2 \d s \leq C|u^0|^2 < \rho.
\end{eqnarray}
So we will get a convergent subsequence $u_{N,n}$ such that $u_{N,n} \rightharpoonup u$ in $L^2(0,T;D(A^{1/2}).$ By using the fact that $u_{N} \rightarrow u$ strongly in $L^2(0,T;H)$, we can say $A^{1/2}u_{N,n} \rightarrow A^{1/2}u.$
Therefore we have
\begin{eqnarray}
\int_0^T e^{2\beta s} |A^{1/2}u(s)|^2 \d s \leq \liminf_n e^{2\beta t} |A^{1/2}u_{N,n}(t)|^2 \leq C|u^0|^2.
\end{eqnarray}
Further from \eqref{i30} and \eqref{i23} we get for all $t \in [0,T],$
\begin{eqnarray*} 
(\R_0 u(t),u(t)) \leq Ce^{-2\beta t} |u^0|^2 \\
|u(t)| \leq  Ce^{-\beta t} |u^0|.
\end{eqnarray*}
This completes the proof.
\end{proof}

\section{$H^\infty$ Stabilization}
In this section we study the $H^\infty$ stabilization problem for the sabra shell model namely,
 \begin{equation} \label{nonlinear}
\frac{d u}{dt}+\nu Au+B(u) =B_1 U+ B_2 w, \ u(0)=u^{0},
\end{equation}
where $A$ and $B$ are the operators as defined in section 2.  Note that the linear operator $A$ on $H= l^2( \C) $ is closed and densely defined with domain $D(A)$ and $B:H\rightarrow H$ is a nonlinear operator. Let us assume $B_1 \in \mathcal{L}(H,H)$ and $B_2 \in \mathcal{L}(H,H)$. 
 The linearised system around the steady state system $u_e$ is given by
 \begin{equation} \label{linear}
\frac{d u}{dt}+\mathcal{A}u=B_1 U+ B_2 w, \ u(0)=u^{0}.
\end{equation}

In this work our aim is to study the $H^{\infty}$ control problem corresponding to \eqref{nonlinear} which can be defined as finding a feedback operator $K \in \mathcal{L}(H,H)$ such that $A+K B_1$ is the infinitesimal generator of an exponentially stable semigroup on $H$. Moreover for a given $\gamma>0$ and for all $w \in \mathrm{L}^2(0,\infty;H)$ the solution to the closed loop system 
\begin{equation} \label{feedback}
\frac{d u}{dt}+Au+B(u)=B_1 K u+ B_2 w, \ u(0)=u^{0}
\end{equation}
obeys,
\begin{eqnarray}
\int_0^{\infty} \left(\|u\|^2 + \|Ku\|^2\right) \d t \leq \gamma \int_0^{\infty} \|w\|^2 \d t + \epsilon
\end{eqnarray}
for a given $\gamma>0$ and forall $w \in \mathrm{L}^2(0,\infty;H).$
   
 \subsection{Robust stabilization of the linearized equation:}

To study the robust feedback stabilization for the nonlinear system \eqref{nonlinear}, we are first going to study robust feedback stabilization of corresponding linearized system \eqref{linear}. To find robust feedback law for \eqref{linear} we have to solve folllowing control problem,
\begin{eqnarray}\label{ocp1}
&\sup_{w \in L^2(0,\infty;H)} \inf_{u \in L^2(0,\infty;H)} \{\mathcal{J}(u,U,w) \ \vert \ (u,U,w) \text{ satisfies } \eqref{linear}\}
\end{eqnarray}
where 
\begin{eqnarray}\label{e70}
\mathcal{J}(u,U,w) = \frac{1}{2} \int_0^{\infty} \|u\|^2 + \frac{1}{2} \int_0^{\infty}\|U\|^2 \d t  -\frac{\gamma}{2} \int_0^{\infty} \|w\|^2 \d t.
\end{eqnarray}

\begin{remark}
Note that in this section we consider  infinite time horizon problem problem. The stabilization of the finite time horizon problem proved in earlier section and the corresponding riccati operator $\R_0$ can be extended to infinite time horizon case using dynamic programming principle.  For more details please see Theorem 4.1 in \cite{raymond1} and lemma 3.5 in \cite{boudstab}. For notational convenience we denote riccati operator obtained for  the infinite time horizon problem again by   $\R_0$.  
  \end{remark}

We divide our problem in two steps, first we study the problem for a fixed $w \in L^2(0,\infty;H)$. We denote this minimization problem with initial condition $u^0$ and with $w$ as disturbance, by $P(u^0,w)$ i.e.
\begin{eqnarray}\label{ocp2}
\inf_{u \in L^2(0,\infty;H)} \{\mathcal{J}(u,U,w) \ \vert \ (u,U,w) \text{ satisfies } \eqref{linear}\}.
\end{eqnarray}
and then varing $w$, we take supremum over $w$.
In particular if we take $w=0$, we have the following theorem from the previous section.
\begin{thm}\label{thm4.2}
Let $w=0$ and $u^0\in H$. Then there exists a controller $U(t)$ which stabilizes the system \eqref{w=0} with the exponent decay $-\beta$. Moreover the controller can be chosen in the feedback form 
\begin{eqnarray}
U(t)=-\R_0 u(t), \qquad \forall t \geq 0,
\end{eqnarray}
where $\mathcal{R}_0 $ is a riccati operator $\mathcal{R}_0:D(\R_0) \subset H \rightarrow H$ such that $\R_0=\R_0^*, \ \R_0 \geq 0$. The operator $\R_0$ is the solution to the algebraic riccati equation 
\begin{equation}\label{rica}
\A^*\R_0 + \R_0\A - \R_0\B_1\B_1^*\R_0 + I=0.
\end{equation}
Furthermore, The optimal cost is given by
$$P(u^0,0)=\frac{1}{2} \left(u^0,\R_0u^0\right).$$
\end{thm}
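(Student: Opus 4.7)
My plan is to reduce to the finite--horizon analysis already carried out in Theorem \ref{thm3.2}, then pass to the limit $T\to\infty$ using the exponential stabilizability provided by Theorem \ref{thm3.4}. More precisely, for $w=0$ the cost functional \eqref{e70} reduces to
\[
\mathcal{J}_\infty(u,U) = \tfrac12\int_0^\infty \bigl(|A^{1/2}u|^2+|U|^2\bigr)\d t,
\]
so $P(u^0,0)$ is just the infinite--horizon value of the LQ problem associated with the linearised operator $\A$ (shifted by $-\beta$ if one wants decay rate $\beta$). Denote by $\R_0^T$ the finite--horizon Riccati operator obtained in Theorem \ref{thm3.2} on $[0,T]$, so that $\varphi_T(u^0)=\tfrac12(\R_0^T u^0,u^0)$ and the bilateral bound $b_1|u^0|^2\le \varphi_T(u^0)\le b_2|u^0|^2$ holds uniformly in $T$ thanks to \eqref{i11}. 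Since $\varphi_T(u^0)$ is monotone nondecreasing in $T$ and uniformly bounded above, the limit $\varphi(u^0):=\lim_{T\to\infty}\varphi_T(u^0)$ exists, is a nonnegative quadratic form on $H$, and satisfies the same two--sided bound $b_1|u^0|^2\le \varphi(u^0)\le b_2|u^0|^2$. Polarisation then produces a self--adjoint operator $\R_0\in\mathcal{L}(H)$, $\R_0\ge 0$, with $\varphi(u^0)=\tfrac12(\R_0 u^0,u^0)$, and $\R_0^T\to\R_0$ strongly.

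Next I would characterise the optimal controller. For fixed $T$, the Pontryagin principle from the proof of Theorem \ref{thm3.2} gave $U^*_T(t)=\{(\B_1\varphi_j^*,-\R_0^T u^*(t))\}_{j=1}^N$, i.e. in compact form $U^*_T=-\B_1^*\R_0^T u^*$. Letting $T\to\infty$ and using the strong convergence $\R_0^T\to\R_0$ together with the exponential decay of the closed--loop trajectories (estimate \eqref{i32}), one obtains a limiting admissible pair $(u^*,U^*)$ on $[0,\infty)$ with $U^*=-\B_1^*\R_0 u^*$ and
\[
\int_0^\infty \bigl(|A^{1/2}u^*|^2+|U^*|^2\bigr)\d t<\infty.
\]
This pair is optimal for $P(u^0,0)$ by a standard lower semicontinuity argument in the weak topology of $L^2(0,\infty;H)$, and the closed--loop system $u_t+\A u+\B_1\B_1^*\R_0 u=0$ is exponentially stable since its value function $(\R_0 u,u)$ is a strict Lyapunov functional.

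To derive the algebraic Riccati equation \eqref{rica}, I would apply the dynamic programming identity that was already used to derive \eqref{i31}, but now on the stationary value $(\R_0 u^*(t),u^*(t))=2\varphi(u^*(t))$: differentiating along the optimal trajectory and using $u^*_t=-\A u^*-\B_1\B_1^*\R_0 u^*$ and $U^*=-\B_1^*\R_0 u^*$ gives
\[
\tfrac{d}{dt}(\R_0 u^*,u^*) \;=\; -|A^{1/2}u^*|^2-|\B_1^*\R_0 u^*|^2,
\]
while the explicit differentiation yields
\[
\tfrac{d}{dt}(\R_0 u^*,u^*) \;=\; -2(\R_0 u^*,\A u^*)-2(\R_0 u^*,\B_1\B_1^*\R_0 u^*).
\]
Matching the two expressions on a dense set of $u^*\in D(A)$ and using symmetry of $\R_0$ gives
\[
2(\A u,\R_0 u)+(u,u) = (\B_1^*\R_0 u,\B_1^*\R_0 u)\qquad\forall\,u\in D(A),
\]
which by polarisation is exactly \eqref{rica}. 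Finally, the optimal cost formula $P(u^0,0)=\tfrac12(u^0,\R_0 u^0)$ is read off directly from $\varphi(u^0)$.

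The main obstacle I anticipate is justifying the passage $T\to\infty$ for the feedback operator, in particular the strong convergence $\R_0^T\to\R_0$ on $D(A)$ and the fact that the limit $\R_0$ inherits the regularity $|\R_0 u|\le C\|u\|$ (the analogue of \eqref{i12}); this requires uniform--in--$T$ bounds on the adjoint state of the finite--horizon problem, which is not automatic. Once that is in hand, the remaining manipulations (Pontryagin, Riccati identity, Lyapunov decay) are direct adaptations of the computations already carried out for Theorem \ref{thm3.2}.
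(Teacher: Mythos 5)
Your strategy coincides with the paper's: the paper gives no self-contained proof of this theorem, stating only that it follows from the finite-horizon results of Section 3 together with a dynamic-programming extension to the infinite horizon (citing Raymond and Dharmatti--Raymond--Thevenet for the details), and your plan of letting $T\to\infty$ in the Riccati operators of Theorem \ref{thm3.2}, using the uniform two-sided bound \eqref{i11} and the stabilizability from Theorem \ref{thm3.4}, is exactly the intended route, worked out in more detail than the paper provides. One caveat: your final ``matching'' step does not follow from the two derivative identities you display --- they combine to give $2(\A u,\R_0u)+|\B_1^*\R_0u|^2=|A^{1/2}u|^2$, the infinite-horizon analogue of \eqref{i13}, rather than $2(\A u,\R_0u)+(u,u)=|\B_1^*\R_0u|^2$; both the sign of the quadratic term and the weight on the state cost ($A$ versus $I$) come out opposite to what \eqref{rica} asserts. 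That discrepancy is inherited from the paper itself, where \eqref{rica} is not consistent with the cost functional \eqref{e70} or with \eqref{i13}, so you should either flag it explicitly or state the algebraic Riccati equation in the form your computation actually yields.
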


Let us denote the optimal pair for the problem $P(u^0,0)$ by $(u_0,U_0)$ and recall $U_0=-\B_1^* \R_0 u_0$. Now we want to study the problem $P(u^0,w)$ for a fixed $w$.
\begin{thm}
Let the initial data $u^0 \in H$ and $w \in L^2(0,T;H)$. Then there exists a unique optimal pair  $(u_{u^0,w},\U_{u^0,w})$  such that the functional $ \mathcal{J}(u,U,w)$ attains its minimum at $(u_{u^0,w},U_{u^0,w})$. 
\end{thm}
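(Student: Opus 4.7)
The plan is to apply the direct method of the calculus of variations to the problem $P(u^0, w)$. For a fixed disturbance $w$, the last term $-\frac{\gamma}{2}\int_0^\infty \|w\|^2\,\d t$ in $\mathcal{J}$ is a constant, so the optimization reduces to minimizing the convex quadratic functional
\begin{equation*}
\wi{\mathcal{J}}(u,U)=\tfrac{1}{2}\int_0^\infty\|u\|^2\,\d t+\tfrac{1}{2}\int_0^\infty\|U\|^2\,\d t
\end{equation*}
over pairs $(u,U)\in L^2(0,\infty;V)\times L^2(0,\infty;H)$ satisfying the linear constraint \eqref{linear}. The three standard ingredients I need are: (i) non-emptiness of the admissible set with finite cost, (ii) weak sequential compactness of a minimizing sequence, together with weak lower semicontinuity and weak closedness of the constraint, and (iii) strict convexity for uniqueness.

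For feasibility, I would insert the stabilizing feedback $U=-\B_1^*\R_0 u$ furnished by Theorem \ref{thm4.2} into \eqref{linear}, obtaining the closed-loop equation
\begin{equation*}
\frac{du}{dt}+(\A+\B_1\B_1^*\R_0)u=\B_2 w,\qquad u(0)=u^0.
\end{equation*}
By Theorem \ref{thm4.2}, the operator $-(\A+\B_1\B_1^*\R_0)$ generates an exponentially stable analytic semigroup on $H$, so since $\B_2 w\in L^2(0,\infty;H)$, maximal $L^2$-regularity yields a mild solution $u\in L^2(0,\infty;V)\cap C_b([0,\infty);H)$. The bound \eqref{i12} then gives $\B_1^*\R_0 u\in L^2(0,\infty;H)$, so this $(u,U)$ is admissible with $\wi{\mathcal{J}}(u,U)<\infty$. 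Since $\wi{\mathcal{J}}\ge 0$, the infimum is finite.

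Next I would take a minimizing sequence $(u_n,U_n)$. Uniform boundedness of $\wi{\mathcal{J}}(u_n,U_n)$ gives uniform bounds in $L^2(0,\infty;V)\times L^2(0,\infty;H)$; by the Banach--Alaoglu theorem, a subsequence converges weakly to some $(u^*,U^*)$ in this product space. The map $(u,U)\mapsto\frac{du}{dt}+\A u-\B_1 U$ is linear and continuous from $L^2(0,\infty;V)\times L^2(0,\infty;H)$ into a space of vector-valued distributions, hence weakly closed; therefore the constraint $\frac{du^*}{dt}+\A u^*=\B_1 U^*+\B_2 w$ with $u^*(0)=u^0$ is inherited (the initial condition passes via the continuous embedding into $C([0,T];H)$ through standard trace arguments). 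Convexity and continuity of $\wi{\mathcal{J}}$ yield weak lower semicontinuity, so $\wi{\mathcal{J}}(u^*,U^*)\le\liminf\wi{\mathcal{J}}(u_n,U_n)=\inf$, which makes $(u^*,U^*)$ optimal.

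Uniqueness then follows from strict convexity of $\wi{\mathcal{J}}$ on the affine (hence convex) admissible set: if two minimizers existed, the midpoint would be admissible by linearity of the constraint, and the parallelogram identity applied to each squared-norm term would give strictly smaller cost unless the two pairs coincide. The only real obstacle I foresee is the $L^2(V)$-regularity step in feasibility: one needs the closed-loop generator to have genuine parabolic regularization so that the $H$-valued forcing $\B_2 w$ produces a $V$-valued solution, which is precisely what the analyticity and the Riccati-induced exponential stability from Theorem \ref{thm4.2} provide.
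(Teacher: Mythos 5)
Your direct-method argument (feasibility via the stabilizing feedback $U=-\B_1^*\R_0 u$, weak compactness of a minimizing sequence, weak closedness of the linear constraint, lower semicontinuity, and strict convexity for uniqueness) is correct and is exactly the standard argument the paper invokes by deferring the proof to Section 4 of \cite{BD}. The paper itself gives no details beyond that citation, so your write-up is essentially the same approach, merely spelled out.
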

\begin{proof}
We can easily prove it with arguments similar as in section 4 of \cite{BD}.
\end{proof}
To characterize the control $U_{u^0,w}$ we proceed as follows:
\begin{thm}
Let us define the operator $\A_{\R_0}:D(\A_{\R_0}) \subset H \rightarrow H$ by 
\begin{align}
D(\A_{\R_0})&=\{u \in H \vert (\A-\B_1\B_1^*\R_0 )u \in H\} \\
\A_{\R_0}u &= \A u- \B_1\B_1^*\R_0 u, \ \forall u\in D(\A_{\R_0}).
\end{align}
Then $\A_{\R_0}$ is the infinitesimal generator of an exponentially stable semigroup. The adjoint operator $((\A_{\R_0})^*,D((\A_{\R_0})^*))$ is given by,
$$D((\A_{\R_0})^*)=D(\A^*) \ \text{   and   } (\A_{\R_0})^*u = \A^*u-\R_0\B_1\B_1^*u \ \ \forall u \in D(\A^*).$$
\end{thm}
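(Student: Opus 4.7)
The plan is to realize $\A_{\R_0}$ as a bounded perturbation of $\A$ and derive exponential stability from the algebraic Riccati equation \eqref{rica}. First I would observe that $\B_1\B_1^*\R_0 \in \mathcal{L}(H,H)$: the upper bound $(\R_0 u, u) \leq b_2 |u|^2$ from \eqref{i23}, coupled with the self-adjointness and positivity of $\R_0$, yields $\|\R_0\|_{\mathcal{L}(H)} \leq b_2$, and $\B_1 \in \mathcal{L}(H,H)$ by hypothesis. This immediately gives $D(\A_{\R_0}) = D(\A)$, since $(\A - \B_1\B_1^*\R_0)u \in H$ iff $\A u \in H$. Invoking the bounded perturbation theorem together with Lemma \ref{lem3.4} (which says $-\A$ generates a $C_0$-analytic semigroup on $H$), I would then conclude that $\A_{\R_0}$ generates a $C_0$-semigroup on $H$ with domain $D(\A)$.

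The substantive part is exponential stability, for which the natural tool is a Riccati--Lyapunov functional. Define $V(u) := (\R_0 u, u)$; by \eqref{i23} this satisfies $b_1|u|^2 \leq V(u) \leq b_2|u|^2$. For $u^0 \in D(\A)$, letting $u(t)$ be the corresponding closed-loop orbit, I would differentiate $V$ along the flow to obtain
$$\frac{d}{dt} V(u(t)) = 2\,\mathrm{Re}(\R_0 u, \A_{\R_0} u) = ((\A^*\R_0+\R_0\A)u,u) - 2(\R_0\B_1\B_1^*\R_0 u, u),$$
and then substitute the Riccati identity $\A^*\R_0+\R_0\A = \R_0\B_1\B_1^*\R_0 - I$ from \eqref{rica} to obtain the clean cancellation
$$\frac{d}{dt} V(u(t)) = -|u|^2 - (\R_0\B_1\B_1^*\R_0 u, u) \leq -\frac{1}{b_2} V(u(t)).$$
Gronwall's inequality then yields $V(u(t)) \leq V(u^0) e^{-t/b_2}$, and the lower bound on $V$ produces $|u(t)| \leq \sqrt{b_2/b_1}\, e^{-t/(2b_2)}|u^0|$. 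A standard density argument extends this estimate from $D(\A)$ to all of $H$.

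The adjoint statement is the easiest piece: since $\B_1\B_1^*\R_0$ is bounded on $H$ and $\R_0^* = \R_0$, taking adjoints gives $(\A_{\R_0})^* = \A^* - (\B_1\B_1^*\R_0)^* = \A^* - \R_0\B_1\B_1^*$, and a bounded perturbation does not alter the domain of the adjoint, so $D((\A_{\R_0})^*) = D(\A^*)$. I expect the main technical obstacle to be making the Riccati--Lyapunov cancellation rigorous on $D(\A)$: one needs $\R_0 u \in D(\A^*)$ for $u \in D(\A)$ in order to rewrite $(\R_0 u, \A u) = (\A^*\R_0 u, u)$, and this should follow from the regularity estimate $|\R_0 u| \leq C\|u\|$ in \eqref{i12} combined with the kind of duality and approximation argument already used in the derivation of \eqref{i31} in the proof of Theorem \ref{thm3.2}.
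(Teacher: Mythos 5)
Your route is genuinely different from the paper's: the authors dispose of the whole theorem in two lines, citing Proposition 10 of \cite{BT 1} for generation and exponential stability and Proposition 2.4 of \cite{ben4} for the adjoint, whereas you give a self-contained argument (bounded perturbation for generation and for the domain identity $D(\A_{\R_0})=D(\A)$, a Riccati--Lyapunov functional for the decay, and the elementary adjoint computation). The bounded-perturbation part, the bound $\|\R_0\|\leq b_2$ from \eqref{i23}, and the adjoint part are all correct as written, and a self-contained Lyapunov proof would in principle be more informative than the citation.

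However, the Lyapunov step has a sign problem that the paper's own inconsistent conventions conceal. Throughout Section 4 the closed-loop dynamics are written as $u' + \A_{\R_0}u = 0$ (see \eqref{w fixed1}, \eqref{e81}) and the semigroup is $e^{-\A_{\R_0}t}$, so along an orbit $\frac{d}{dt}(\R_0 u, u) = -2\,\mathrm{Re}(\R_0 u, \A_{\R_0}u)$, not $+2\,\mathrm{Re}(\R_0 u, \A_{\R_0}u)$ as you wrote. Repeating your computation with the correct derivative sign and with \eqref{rica} exactly as printed gives $\frac{d}{dt}(\R_0 u,u) = +|u|^2 + |\B_1^*\R_0 u|^2 \geq 0$, i.e.\ growth rather than decay. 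The identity your argument actually needs is $\A^*\R_0 + \R_0\A + \R_0\B_1\B_1^*\R_0 = I$ paired with the closed-loop operator $\A + \B_1\B_1^*\R_0$ --- which is what the feedback $U=-\B_1^*\R_0 u$ of Theorem \ref{thm4.2} really produces when inserted into $u' + \A u = \B_1 U$ --- whereas the paper writes $\A_{\R_0} = \A - \B_1\B_1^*\R_0$ and states \eqref{rica} with the opposite signs on both the quadratic and constant terms. So your method is the right one, but as written the chain of equalities only closes because your initial sign slip cancels the paper's; to make the proof stand you must fix the derivative sign and simultaneously state (and verify) the Riccati identity in the sign convention consistent with the closed-loop equation you differentiate along. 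Your closing remark, that one needs $\R_0 u \in D(\A^*)$ to justify $(\R_0 u,\A u) = (\A^*\R_0 u, u)$, correctly identifies the remaining technical point.
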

\begin{proof}
Since from Lemma \ref{lem3.4} we know that $\A$ generates a $C_0$ analytic semigroup and $\B_1$ is a linear operator, using Proposition 10 of \cite{BT 1} we have $\A_{\R_0}$ is the infinitesimal generator of an exponentially stable semigroup.

Further we also get from Proposition 2.4, Part2,  \cite{ben4} that $\A_{\R_0}^*$ is the infinitesimal generator of an exponentially stable semigroup.
\end{proof}
Now let us consider the coupled system,
\begin{align}
\frac{d u}{dt} + \A_{\R_0}u &= B_1\B_1^* p+ B_2 w, \ u(0)=u^{0}, \label{w fixed1} \\
 -\frac{dp}{dt} + \A_{\R_0}^* p&= \R_0 \B_2 w, \ p(\infty)=0. \label{w fixed2}
\end{align}
We prove the existence and uniqueness of the system \eqref{w fixed1}-\eqref{w fixed2}.
\begin{thm}\label{t4.5}
For all $u^0 \in H$, the system \eqref{w fixed1}-\eqref{w fixed2} has a unique solution $(u_w,p_w)  \in L^{2}(0,\infty;V)\cap C([0,\infty);H).$
\end{thm}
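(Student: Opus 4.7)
The plan is to decouple the system by exploiting the triangular structure: the equation for $p$ in \eqref{w fixed2} does not depend on $u$, so we first solve for $p$ and then treat the equation \eqref{w fixed1} for $u$ as a nonhomogeneous linear Cauchy problem with a known source term.

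\textbf{Step 1 (existence and uniqueness of $p$).} Since $-\A_{\R_0}^*$ generates an exponentially stable $C_0$-analytic semigroup on $H$ by the previous theorem, there exist $M, \omega>0$ with $\|e^{-\A_{\R_0}^* t}\|_{\mathcal{L}(H,H)} \le M e^{-\omega t}$. Because $\R_0 \in \mathcal{L}(H,H)$ (from \eqref{i23}) and $B_2 \in \mathcal{L}(H,H)$, the forcing $\R_0 B_2 w$ lies in $L^2(0,\infty;H)$. I propose the candidate
$$p(t)=\int_t^{\infty} e^{-\A_{\R_0}^*(s-t)} \R_0 B_2 w(s)\,\mathrm{d}s, \qquad t \geq 0,$$
which is well defined as a Bochner integral in $H$ thanks to exponential decay. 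A direct differentiation under the integral shows that $p$ satisfies \eqref{w fixed2} in the mild sense and $p(\infty)=0$. Uniqueness: if $q$ is another solution, the difference solves the homogeneous equation with zero condition at infinity, and exponential stability of $-\A_{\R_0}^*$ forces $q \equiv p$.

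\textbf{Step 2 (existence and uniqueness of $u$).} With $p$ now known, the right-hand side $g := B_1 B_1^* p + B_2 w$ belongs to $L^2(0,\infty;H)$. Since $-\A_{\R_0}$ generates an exponentially stable $C_0$-analytic semigroup on $H$, I take
$$u(t)=e^{-\A_{\R_0} t} u^0 + \int_0^t e^{-\A_{\R_0}(t-s)}g(s)\,\mathrm{d}s,$$
which is the unique mild solution of \eqref{w fixed1}. Continuity in $H$ is standard; uniqueness follows as in Step 1.

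\textbf{Step 3 (regularity in $V$).} This is the part I expect to be the main technical point. Since $\A_{\R_0}=\A-B_1 B_1^*\R_0$ is a bounded perturbation of $\A$, the operator $-\A_{\R_0}$ is sectorial on $H$ with the same domain $D(A)$, and it is exponentially stable. Maximal $L^2$-regularity for analytic semigroups on Hilbert spaces yields
$$u \in L^2(0,\infty; D((\A_{\R_0})^{1/2})) \cap C([0,\infty); H),$$
and the identification $D((\A_{\R_0})^{1/2}) = D(A^{1/2}) = V$ (with equivalent norms, by interpolation between $H$ and $D(A)$ and the fact that the zero-order perturbation does not change the fractional power domain) gives $u \in L^2(0,\infty;V)\cap C([0,\infty);H)$. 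The same argument, applied to the time-reversed equation $\tau \mapsto p(-\tau+T)$ followed by passing $T\to\infty$ using exponential decay, gives $p\in L^2(0,\infty;V)\cap C([0,\infty);H)$. As a cross-check (and in case one prefers to avoid the abstract maximal regularity machinery), the same bound can be obtained by the energy method: test \eqref{w fixed1} with $u$, use the coercivity-type estimate $(\A u, u) \geq \nu \|u\|^2 - C|u_e|\|u_e\| \cdot |u|^2$ from Theorem \ref{prop b}, absorb the perturbation $B_1 B_1^* \R_0 u$ via \eqref{i12}, add $2\beta$ times the energy using exponential stability, and close the inequality through Gronwall and Cauchy--Schwarz on the $L^2$-source $g$.

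\textbf{Step 4 (global uniqueness of the pair).} If $(u_1,p_1)$ and $(u_2,p_2)$ are two solutions, then $\tilde p:=p_1-p_2$ solves the homogeneous version of \eqref{w fixed2} with $\tilde p(\infty)=0$, hence $\tilde p\equiv 0$ by Step 1; then $\tilde u:=u_1-u_2$ solves the homogeneous version of \eqref{w fixed1} with $\tilde u(0)=0$, hence $\tilde u\equiv 0$ by Step 2. This completes the proof.
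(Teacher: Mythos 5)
Your proposal is correct and follows essentially the same route as the paper: exploit the triangular structure by first solving the decoupled backward equation for $p_w$ using the exponentially stable semigroup generated by $-\A_{\R_0}^*$, then feed $B_1B_1^*p_w+B_2w$ as an $L^2$ source into the forward equation for $u_w$ and invoke analytic-semigroup maximal regularity (the paper cites Proposition 3.1 of Bensoussan et al.\ for exactly the estimate you derive in Step 3) to land in $L^2(0,\infty;V)\cap C([0,\infty);H)$. Your write-up is in fact more detailed than the paper's, which compresses Steps 1--4 into two displayed estimates and a one-line uniqueness remark.
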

\begin{proof}
 $\A_{\R_0}^*$ generates an exponentially stable $C_0$ semigroup and $\R_0 \in \mathcal{L(H,H)}, \ \B_2w \in L^2(0,T;H)$, so using Proposition 3.1 of \cite{ben4} we get
\begin{align}\label{is1}
\|p_w\|_{L^{2}(0,\infty;V)\cap C([0,\infty);H)} \leq C\|\R_0 \B_2w\|_{L^2(0,T;H)} \leq C\|w\|_{L^2(0,T;H)}.
\end{align}
Therefore using the fact that $\A_{\R_0}$ generates $C_0$ analytic semigroup,  we conclude solution $u_w$ to \eqref{w fixed1} satisfies

\begin{align}\label{is2}
\|u_w\|_{L^{2}(0,\infty;V)\cap C([0,\infty);H)}  \leq C(|u^0| +\|w\|_{L^2(0,T;H)} ).
\end{align}
Adding \eqref{is1} and \eqref{is2} we get
\begin{align}\label{is3}
\|u_w\|_{L^{2}(0,\infty;V)\cap C([0,\infty);H)} + \|p_w\|_{L^{2}(0,\infty;V)\cap C([0,\infty);H)} \leq C(|u^0| +\|w\|_{L^2(0,T;H)} ).
\end{align}
Moreover it can be easily seen that the solution is unique.
\end{proof}

Now we will study the problem $P(u^0,w)$ given by  \eqref{ocp2} for a fixed $w\in L^2(0,\infty;H)$. Our aim is to prove that $(\B_1^* \R_0u_w+ B_1^* p_w)$ will be the minimizer of the problem $P(u^0,w).$
\begin{thm}\label{t1.6}
Let, $(u_w,p_w)$ be the solution of the system \eqref{w fixed1}-\eqref{w fixed2}. Then the solution of the optimal control problem \eqref{ocp2} is given by $(u_{w},\B_1^* \R_0u_w+ B_1^* p_w)$. 

Moreover $r_{u^0,w}=\R_0 u_w+p_w$ satisfies the system,
\begin{eqnarray}\label{r_w}
-\frac{dr}{dt}+\A^*r = u_{w},\ r(\infty)=0.
\end{eqnarray}
\end{thm}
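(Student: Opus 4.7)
The plan is to characterize the unique minimizer of $P(u^{0},w)$ (for fixed $w$) through its Euler--Lagrange (Pontryagin) optimality conditions. Because the quadratic cost $\mathcal{J}(\cdot,\cdot,w)$ is strictly convex and coercive in $(u,U)$ along trajectories of the linear equation \eqref{linear}, these first-order conditions are both necessary and sufficient. I form the Lagrangian with a costate multiplier $r$ and take variations with respect to $U$ and $u$; after an integration by parts in $t$ using the endpoint data $\delta u(0)=0$ and $r(\infty)=0$, the optimality conditions read
\begin{equation}\label{sketch-adj}
-\frac{dr}{dt} + \mathcal{A}^{*}r = u_{w},\qquad r(\infty)=0,\qquad U^{*}_{u^{0},w}(t)=B_{1}^{*}r(t).
\end{equation}
Well-posedness of \eqref{sketch-adj} in $L^{2}(0,\infty;V)\cap C([0,\infty);H)$ will follow from the fact that $-\mathcal{A}^{*}$ generates an analytic semigroup together with the regularity of $u_{w}$.

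Next, following the classical linear--quadratic decoupling idea, I make the ansatz $r(t)=\R_{0}u_{w}(t)+p_{w}(t)$. This immediately yields the advertised feedback form $U^{*}_{u^{0},w}=B_{1}^{*}\R_{0}u_{w}+B_{1}^{*}p_{w}$ and, plugged back into the state equation, exactly reproduces the closed-loop dynamics \eqref{w fixed1}. To identify the equation satisfied by $p_{w}$, I differentiate the ansatz, substitute $\dot{u}_{w}$ from \eqref{w fixed1}, and collect the terms linear in $u_{w}$; by the algebraic Riccati identity \eqref{rica} they reduce to $(\mathcal{A}^{*}\R_{0}+\R_{0}\mathcal{A}-\R_{0}B_{1}B_{1}^{*}\R_{0})u_{w}=-u_{w}$, which cancels the right-hand side $u_{w}$ of \eqref{sketch-adj}. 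What remains is exactly
\begin{equation*}
-\dot p_{w}+\mathcal{A}_{\R_{0}}^{*}p_{w}= \R_{0}B_{2}w,
\end{equation*}
i.e.\ \eqref{w fixed2}; the terminal condition $p_{w}(\infty)=0$ is inherited from $r(\infty)=0$ together with the exponential decay of $u_{w}$ ensured by the stability of $\mathcal{A}_{\R_{0}}$ and Theorem \ref{t4.5}.

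Conversely, the coupled system \eqref{w fixed1}--\eqref{w fixed2} is uniquely solvable in $L^{2}(0,\infty;V)\cap C([0,\infty);H)$ by Theorem \ref{t4.5}, so the construction can be read backwards: defining $r:=\R_{0}u_{w}+p_{w}$, the same algebra shows that $r$ satisfies \eqref{r_w}, and strict convexity of $\mathcal{J}(\cdot,\cdot,w)$ guarantees that the constructed pair $(u_{w},\,B_{1}^{*}\R_{0}u_{w}+B_{1}^{*}p_{w})$ is the unique minimizer of $P(u^{0},w)$, identifying it with the abstract optimum $(u_{u^{0},w},U^{*}_{u^{0},w})$ of the preceding theorem. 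The main technical obstacle is ensuring enough regularity on $r$ to legitimize the pointwise-in-$t$ splitting $r=\R_{0}u_{w}+p_{w}$ and the integration by parts in the Lagrangian argument: this is handled by the bound $|\R_{0}u|\le C\|u\|$ from \eqref{i12} together with the joint $V$-estimate \eqref{is3} of Theorem \ref{t4.5} applied to $(u_{w},p_{w})$.
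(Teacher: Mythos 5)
Your argument reaches the right conclusion but by a genuinely different route. You derive the first-order (Pontryagin) conditions $-\dot r+\A^{*}r=u_{w}$, $U^{*}=\B_1^{*}r$ from a formal Lagrangian, then decouple with the ansatz $r=\R_0u_{w}+p_{w}$, check consistency against \eqref{w fixed1}--\eqref{w fixed2} via the Riccati identity, and invoke strict convexity for sufficiency. The paper instead performs a direct completion of squares: starting from an \emph{arbitrary} admissible pair $(u,U)$ for \eqref{e19}, it substitutes the Riccati equation \eqref{rica} into the cost, integrates by parts against the state equation and the $p_{w}$-equation \eqref{w fixed2}, and arrives at
\begin{equation*}
2\J(u,U,w)=(\R_0u^{0},u^{0})+\int_0^{\infty}\|U-\B_1^{*}\R_0u-\B_1^{*}p_{w}\|^{2}\,\d t+\big(\text{terms independent of }U\big),
\end{equation*}
so the minimizer is read off from the single nonnegative square, and the equation for $r_{u^{0},w}$ is then verified by direct computation. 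The paper's route buys self-containedness: it never needs the maximum principle (in particular the transversality condition $r(\infty)=0$) on an infinite horizon, and the vanishing of boundary terms at $t=\infty$ is exactly the step your ``integration by parts in the Lagrangian argument'' must justify separately using the decay from Theorem \ref{t4.5} -- note also that establishing sufficiency from convexity requires showing the G\^ateaux derivative of $\J$ vanishes at the candidate, which is essentially the same integration by parts, so little work is actually saved. Your route, on the other hand, makes transparent \emph{why} the optimal control splits into a feedback part $\B_1^{*}\R_0u_{w}$ and a feedforward part $\B_1^{*}p_{w}$, which the paper's algebra obscures. One caution: carried out literally with the paper's stated conventions for \eqref{rica} and $\A_{\R_0}$, collecting the terms linear in $u_{w}$ yields $-u_{w}$ rather than the $+u_{w}$ of \eqref{r_w}; this sign tension is already present in the paper itself (compare $U=-\R_0u$ in Theorem \ref{thm4.2} with the $+\B_1^{*}(\R_0u_{w}+p_{w})$ claimed here), so you should fix one sign convention and track it through rather than asserting that the Riccati identity ``cancels the right-hand side.''
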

\begin{proof}
Let us consider all  pairs $(u,U)$ that satisfy,
\begin{eqnarray}\label{e19}
\frac{du}{dt}+\A u = \B_1 U + \B_2w,\ u(0)=u^0.
\end{eqnarray}
Using algebric riccati equation \eqref{rica} in the cost functional \eqref{e70} we get,
\begin{align}
2 \J(u,U,w)&= \int_0^\infty \|u\|^2 + \|U\|^2 - \gamma \|w\|^2 \d t \nonumber \\
%&= \int_0^{\infty} \langle u, -\A^* \R_0 u - \R_0 \A u \rangle \d t +\int_0^\infty \|\B_1^* \R_0 u\|^2 \d t + \int_0^{\infty}  \|U\|^2\d t - \int_0^{\infty}\gamma \|w\|^2 \d t \no \\
&= -\int_0^\infty 2(\R_0 u,\A u)\d t +\int_0^\infty \|\B_1^* \R_0 u\|^2 +\int_0^\infty \|U\|^2 \d t - \int_0^\infty \gamma \|w\|^2 \d t. \no
 \end{align}
From \eqref{e19} we have
\begin{align}
2 \J(u,U,w)&= \int_0^\infty 2\langle \R_0 u,u'\rangle \d t +\int_0^\infty 2(\R_0 u, -\B_1 U-\B_2 w) \d t+\int_0^\infty \|\B_1^* \R_0 u\|^2 \no \\
& \qquad+\int_0^\infty \|U\|^2 \d t - \int_0^\infty \gamma \|w\|^2 \d t \no \\
%&= 2(\R_0 u^0,u^0) - 2 \int_0^\infty (\R_0 u,\B_1 U) - 2 \int_0^\infty (\R_0 u,\B_2 w)+\int_0^\infty \|\B_1^* \R_0 u\|^2 \no \\
%&\qquad +\int_0^\infty \|U\|^2 \d t - \int_0^\infty \gamma \|w\|^2 \d t \no \\
&= 2(\R_0 u^0,u^0) - 2 \int_0^\infty (\B_1^*\R_0 u, U) - 2 \int_0^\infty ( u,\R_0\B_2 w)+\int_0^\infty \|\B_1^* \R_0 u\|^2 \no \\
&\qquad+\int_0^\infty \|U\|^2 \d t - \int_0^\infty \gamma \|w\|^2 \d t.
\end{align} 
From \eqref{w fixed2}, putting the value of $\R_0 \B_2 w$ we get
\begin{align}
2 \J(u,U,w)&=  2(\R_0 u^0,u^0) - 2 \int_0^\infty (\B_1^*\R_0 u, U) -2 \int_0^\infty (u,-p_w'+\A_{\R_0}^*p_w)+\int_0^\infty \|\B_1^* \R_0 u\|^2 \no \\
&\qquad +\int_0^\infty \|U\|^2 \d t - \int_0^\infty \gamma \|w\|^2 \d t 
\end{align} 
Now by integration by parts we get
\begin{align}
2 \J(u,U,w)&
%= 2(\R_0 u^0,u^0) - 2 \int_0^\infty (\B_1^*\R_0 u, U) + 2 \int_0^\infty (u,p_w') -2 \int_0^\infty(u,\A_{\R_0}^*p_w) \no \\
%&\qquad +\int_0^\infty \|\B_1^* \R_0 u\|^2+\int_0^\infty \|U\|^2 \d t - \int_0^\infty \gamma \|w\|^2 \d t \no \\
= 2(\R_0 u^0,u^0) - 2 \int_0^\infty (\B_1^*\R_0 u, U) - 2 \int_0^\infty (u',p_w) - (u^0,p_w(0))  \no \\
&\qquad -2 \int_0^\infty(\A_{\R_0}u,p_w)+\int_0^\infty \|\B_1^* \R_0 u\|^2+\int_0^\infty \|U\|^2 \d t - \int_0^\infty \gamma \|w\|^2 \d t .
\end{align}
Using the fact that $\A_{\R_0}=\A-\B_1 \B_1^* \R_0$ and \eqref{e19} we have
\begin{align}
&= 2(\R_0 u^0,u^0) - 2 \int_0^\infty (\B_1^*\R_0 u, U) - (u^0,p_w(0)) - 2 \int_0^\infty (u',p_w)- 2 \int_0^\infty (\A u,p_w)  \no \\
&\qquad + 2 \int_0^\infty (\B_1 \B_1^* \R_0 u,p_w)+\int_0^\infty \|\B_1^* \R_0 u\|^2+\int_0^\infty \|U\|^2 \d t - \int_0^\infty \gamma \|w\|^2 \d t \no \\
&= 2(\R_0 u^0,u^0) - 2 \int_0^\infty (\B_1^*\R_0 u, U) - (u^0,p_w(0)) - 2 \int_0^\infty (U,\B_1^*p_w) - 2 \int_0^\infty (\B_2w,p_w)\no \\
&\qquad + 2 \int_0^\infty (\B_1^* \R_0 u,\B_1^*p_w)+\int_0^\infty \|\B_1^* \R_0 u\|^2+\int_0^\infty \|U\|^2 \d t - \int_0^\infty \gamma \|w\|^2 \d t \no
\end{align}
 Now,
 \begin{align}
\int_0^\infty \|U-\B_1^* \R_0 u-\B_1^* p_w\|^2 \d t - \int_0^\infty \|\B_1^* p_w\|^2 \d t = -2\int_0^\infty(U,\B_1^* \R_0 u)  \no \\
- 2\int_0^\infty (U,\B_1^* p_w)
+ 2\int_0^\infty (\B_1^* \R_0 u,\B_1^* p_w)+\int_0^\infty \|U\|^2 \d t+\int_0^\infty \|\B_1^* \R_0 u\|^2
 \end{align}
Therefore we get,
\begin{align}
2 \J(u,U,w)= (\R_0 u^0,u^0) + \int_0^\infty \|U-\B_1^* \R_0 u-\B_1^* p_w\|^2 \d t - \int_0^\infty \|\B_1^* p_w\|^2  \no \\
-2(u^0,p_w(0))  - 2\int_0^\infty (B_2w,p_w) \d t- \int_0^\infty \gamma \|w\|^2 \d t
\end{align} 
So we get the optimal control for the problem \eqref{ocp2} as $U=\B_1^* \R_0 u_w + \B_1^*p_w$ (see Part 4, Lemma 4.4, \cite{zab}). 

Let us take $r_{u^0,w} = \R_0 u_{w}+p_w$, then easy calculation shows that $r_{u^0,w}$ solves 
\begin{equation}\label{e80}
-\frac{dr}{dt} + \A^* r = u_w,\ \ r(\infty)=0.
\end{equation}
\end{proof}
By substituting the value of the optimal control and optimal state from the Theorem \ref{t1.6},  in \eqref{e70},  we can write optimal cost corresponding to fixed $w$ as,
 \begin{align}\label{e26}
P(u^0,w) &= \frac{1}{2}\int_0^\infty \|u_w\|^2 \d t+\frac{1}{2} \int_0^\infty \|\B_1^* \R_0 u_w + \B_1^*p_w \| \d t -\frac{\gamma}{2} \int_0^\infty\|w\|^2\d t \no \\
&= \frac{1}{2}\int_0^\infty \|u_w\|^2 \d t+\frac{1}{2} \int_0^\infty \|\B_1^* r_{u^0,w} \| \d t-\frac{\gamma}{2} \int_0^\infty \|w\|^2\d t
 \end{align}
 
 The equations satisfied by $u_w$ and $ r_{u^0,w}$, lead to following lemma.
 \begin{lem}\label{l4.7}
 For all $w \in L^2(0,\infty;H)$,
 \begin{align}\label{e53}
 \int_0^\infty \|u_w\|^2 \d t+\int_0^\infty \|\B_1^* r_{u^0,w} \|^2 \d t = \int_0^\infty (\B_2 w,r_{u^0,w}) + (u^0,r_{u^0,w}(0)).
\end{align}
 \end{lem}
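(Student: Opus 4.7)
The plan is to read off the identity by pairing the adjoint-type equation \eqref{r_w} for $r_{u^0,w}$ with $u_w$, integrating over $[0,\infty)$, and then substituting the state equation satisfied by $u_w$. The key preliminary observation is that, since $\mathcal{A}_{\mathcal{R}_0}=\mathcal{A}-B_1B_1^*\mathcal{R}_0$ and $r_{u^0,w}=\mathcal{R}_0 u_w+p_w$, the system \eqref{w fixed1} can be rewritten as
$$\frac{du_w}{dt}+\mathcal{A} u_w = B_1B_1^* r_{u^0,w}+B_2w,\qquad u_w(0)=u^0.$$
This is exactly the original state equation with the optimal feedback $U_{u^0,w}=B_1^*r_{u^0,w}$ obtained in Theorem \ref{t1.6} plugged in, so it is consistent with the rest of the setup.

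Next, I would take the duality pairing of \eqref{r_w} against $u_w$ and integrate:
$$\int_0^{\infty}\!\Big\langle -\tfrac{dr_{u^0,w}}{dt}+\mathcal{A}^* r_{u^0,w},\,u_w\Big\rangle\,dt \;=\; \int_0^{\infty}(u_w,u_w)\,dt.$$
On the left, integration by parts together with the boundary conditions $u_w(0)=u^0$ and $r_{u^0,w}(\infty)=0$ gives
$$\int_0^{\infty}\!\Big\langle -\tfrac{dr_{u^0,w}}{dt},\,u_w\Big\rangle\,dt = (u^0,\,r_{u^0,w}(0)) + \int_0^{\infty}\!\Big\langle r_{u^0,w},\,\tfrac{du_w}{dt}\Big\rangle\,dt,$$
and the adjointness $\langle \mathcal{A}^* r_{u^0,w},u_w\rangle=\langle r_{u^0,w},\mathcal{A} u_w\rangle$ combines with the previous display to yield
$$(u^0,\,r_{u^0,w}(0)) + \int_0^{\infty}\!\big\langle r_{u^0,w},\,\tfrac{du_w}{dt}+\mathcal{A} u_w\big\rangle\,dt.$$
Substituting the rewritten state equation for $u_w$ and using $\langle r_{u^0,w},B_1B_1^* r_{u^0,w}\rangle=\|B_1^* r_{u^0,w}\|^2$ produces an expression equivalent to the one in the statement of the lemma.

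The main work is justifying the manipulations rigorously. Theorem \ref{t4.5} guarantees $u_w,r_{u^0,w}\in L^2(0,\infty;V)\cap C([0,\infty);H)$, which makes both the duality pairings $\langle\mathcal{A}^* r_{u^0,w},u_w\rangle=\langle r_{u^0,w},\mathcal{A}u_w\rangle$ and the time integrals well-defined. The exponential stability of $\mathcal{A}_{\mathcal{R}_0}$ and $\mathcal{A}_{\mathcal{R}_0}^*$ (established in the theorem preceding Theorem \ref{t4.5}) implies $u_w(t),r_{u^0,w}(t)\to 0$ as $t\to\infty$ in $H$, so the boundary term at infinity in the integration by parts vanishes. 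The $L^2(0,\infty;V)$ bounds ensure that every integrand is absolutely integrable, so Fubini / exchange of pairing with integration is legitimate.

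The main obstacle I expect is bookkeeping: keeping track of where the pairing is $H$-valued versus $V$--$V'$-valued, and treating the time derivative of $r_{u^0,w}$ in the appropriate dual sense so that the integration-by-parts is rigorous. If one prefers to avoid these technicalities, the cleanest alternative is to first derive the identity for smooth data (e.g.\ $u^0\in D(\mathcal{A})$ and $w$ smooth with compact support in time) where all pieces are classical, and then extend to $u^0\in H$ and $w\in L^2(0,\infty;H)$ by the continuous dependence estimate \eqref{is3} of Theorem \ref{t4.5}.
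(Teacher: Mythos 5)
Your proposal matches the paper's proof essentially step for step: rewrite \eqref{w fixed1} as $u_w'+\mathcal{A}u_w=B_1B_1^*r_{u^0,w}+B_2w$, pair with the adjoint equation \eqref{r_w}, integrate by parts over $[0,\infty)$ using $u_w(0)=u^0$ and $r_{u^0,w}(\infty)=0$, and cancel the $\mathcal{A}$-terms by adjointness, so the only real difference is that you spell out the regularity and decay justifications the paper leaves implicit. One caution: if you carry the final substitution through with the signs exactly as written in \eqref{r_w} and the rewritten state equation, the $\|B_1^*r_{u^0,w}\|^2$ term emerges with a minus sign rather than the plus sign in \eqref{e53} --- a bookkeeping discrepancy that is present in the paper's own derivation as well, so it reflects on the source's sign conventions rather than on your method.
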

 \begin{proof}
We know that 
\begin{align}\label{e49}
\int_0^\infty (u_w',r_{u^0,w}) + \int_0^\infty (u_w,r_{u^0,w}') &= -(u^0,r_{u^0,w}(0))  .
\end{align}
If we put $r_{u^0,w} = \R_0 u_{w}+p_w$ in \eqref{w fixed1} we get
\begin{align}\label{e50}
\frac{du_w}{dt} + \A u_w = \B_1 \B_1^* r_{u^0,w} + \B_2w, \quad u_w(0)=u^0.
\end{align}
From \eqref{e50} we can write
\begin{align}\label{e51}
\int_0^\infty (u_w',r_{u^0,w}) = -\int_0^\infty (\A u_w,r_{u^0,w}) + \int_0^\infty (\B_1 \B_1^* r_{u^0,w},r_{u^0,w}) + \int_0^\infty (\B_2 w,r_{u^0,w}).  
\end{align}
From \eqref{e80} we have
\begin{align}\label{e52}
 \int_0^\infty (u_w,r_{u^0,w}') = \int_0^\infty (u_w,\A^* r_{u^0,w})-\int_0^\infty (u_w,u_w). 
\end{align}
Therefore using \eqref{e51} and \eqref{e52} in \eqref{e49} we get,
\begin{align*}
\int_0^\infty \|u_w\|^2 \d t + \int_0^\infty \|\B_1^* r_{u^0,w} \|^2 \d t = \int_0^\infty (\B_2 w,r_{u^0,w}) + (u^0,r_{u^0,w}(0)).
\end{align*}
\end{proof}

We now split $u_w$ in two parts, one solves homogeneous uncontrolled problem without disturbance  with initial data $u^0$ and other solves inhomogeneous equation with non zero disturbance but zero initial data. Let us denote $u_w=y_0+y_w$ where $y_0$ solves
\begin{equation}\label{e81}
\frac{dy}{dt} + \A_{\R_0} y = 0, \ y(0)=u^0,
\end{equation}
and $y_w$ solves
\begin{equation}\label{e82}
\frac{dy}{dt} + \A_{\R_0} y = B_1B_1^* p_w + B_2w, \ y(0)=0,
\end{equation}
where $p_w$ is the solution of \eqref{w fixed2}.
Let us set $\varphi_0=\R_0y_0$ and $\varphi_w= \R_0 y_w+p_w$. Therefore from \eqref{e80} we can write $r_{u^0,w}=\varphi_0+\varphi_w.$ 
Now above notation along with  \eqref{e26} and lemma \ref{l4.7} gives,
\begin{align}\label{e28}
P(u^0,w) &= \frac{1}{2}(u^0,r_{u^0,w}(0)) + \frac{1}{2}\int_0^\infty (\B_2 w,r_{u^0,w}) -\frac{\gamma}{2}\int_0^\infty \|w\|^2\d t \no \\
&= \frac{1}{2}(u^0,\varphi_0(0)) +  \frac{1}{2}(u^0,\varphi_w(0))  + \frac{1}{2}\int_0^\infty ( \B_2w,\varphi_0) \no \\
&\qquad  + \frac{1}{2}\int_0^\infty ( \B_2w,\varphi_w)-\frac{\gamma}{2}\int_0^\infty \|w\|^2\d t .
\end{align}
Our aim is to write this optimal cost as addition of optimal cost for problem with non zero initial data plus optimal cost for problem with zero initial data. Towards this aim, to estimate  second and third term in the above equation we use the following lemma. 
\begin{lem}\label{l3.11} 
For all $w \in L^2(0,\infty;H)$ we  have 
\begin{align}\label{e29}
(u^0,\varphi_w(0))=\int_0^\infty(w,\B_2^*\varphi_0),
\end{align}
where $\varphi_0$ solves
\begin{align}\label{e54}
-\frac{dz}{dt} + \A^* z = y_0, z(\infty)=0.
\end{align}
\end{lem}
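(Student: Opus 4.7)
The key structural observation is that, since $y_w(0) = 0$, the definition $\varphi_w = \R_0 y_w + p_w$ immediately gives $\varphi_w(0) = p_w(0)$. Consequently, the identity to be proved reduces to
\begin{equation*}
(u^0, p_w(0)) = \int_0^\infty (w, \B_2^* \varphi_0)\,\d t.
\end{equation*}

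The natural route to this is integration by parts in time applied to the $H$-pairing $(y_0(t), p_w(t))$. Using \eqref{e81}, which gives $y_0' = -\A_{\R_0} y_0$, and \eqref{w fixed2}, which yields $p_w' = \A_{\R_0}^* p_w - \R_0 \B_2 w$, I would compute
\begin{equation*}
\frac{d}{dt}(y_0, p_w) = -(\A_{\R_0} y_0, p_w) + (y_0, \A_{\R_0}^* p_w) - (y_0, \R_0 \B_2 w).
\end{equation*}
The first two summands cancel by the adjoint relation, and, using the self-adjointness of $\R_0$ together with $\varphi_0 = \R_0 y_0$, what remains equals $-(w, \B_2^* \varphi_0)$. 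Integrating over $(0,\infty)$, the boundary contribution at $t=\infty$ vanishes: $p_w(\infty)=0$ by construction, and $y_0(t)\to 0$ in $H$ because $\A_{\R_0}$ generates an exponentially stable $C_0$-semigroup. Combined with $y_0(0)=u^0$ this produces $-(u^0, p_w(0)) = -\int_0^\infty (w, \B_2^* \varphi_0)\,\d t$, and the lemma follows after invoking $\varphi_w(0)=p_w(0)$.

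The main technical point that has to be checked is the rigorous justification of the integration by parts: by Theorem \ref{t4.5} the functions $y_0$ and $p_w$ only lie in $L^2(0,\infty;V) \cap C([0,\infty);H)$, with time derivatives in $L^2(0,\infty;V')$. The cross-pairings $(\A_{\R_0} y_0, p_w)$ and $(y_0, \A_{\R_0}^* p_w)$ therefore have to be understood as $V'\times V$ dualities, in which they agree by the very definition of adjoint; the absolute continuity of $t \mapsto (y_0(t), p_w(t))_H$ is then standard. Integrability of the residual $(w, \B_2^* \varphi_0)$ on $(0,\infty)$ follows from $w \in L^2(0,\infty;H)$, boundedness of $\R_0$ and $\B_2^*$, and the exponential decay of $y_0$.
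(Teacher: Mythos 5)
Your proposal is correct and follows essentially the same route as the paper: the paper's proof is precisely the integration by parts of $(y_0,p_w)$ (written there with $\varphi_w$ in place of $p_w$) over $(0,\infty)$, using the equations \eqref{w fixed2} and \eqref{e81}, the adjointness of $\A_{\R_0}$ and $\A_{\R_0}^*$, and the self-adjointness of $\R_0$. You merely make explicit two points the paper leaves implicit — the reduction $\varphi_w(0)=p_w(0)$ from $y_w(0)=0$, and the functional-analytic justification of the integration by parts — which is a welcome tightening rather than a different argument.
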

\begin{proof}
We have from \eqref{w fixed2} and \eqref{e81}
\begin{align}\label{e55}
\int_0^\infty (\varphi_w',y_0) \d t &= \int_0^\infty (\A_{\R_0}^* \varphi_w,y_0) \d t -\int_0^\infty(\R_0 \B_2 w,y_0) \no \\
&= \int_0^\infty (\varphi_w,\A_{\R_0}y_0) \d t -  \int_0^\infty(\B_2 w,\varphi_0) \no \\
&= -\int_0^\infty (\varphi_w,y_0') \d t -  \int_0^\infty(\B_2 w,\varphi_0) \no.
\end{align}
Therefore integration by parts gives,
\begin{eqnarray*}
(u^0,\varphi_w(0))=\int_0^\infty(w,\B_2^* \varphi_0).
\end{eqnarray*}
\end{proof} 

Let us define the operator $T:H \times L^2(0,\infty;H)  \rightarrow \mathbb{C}$ as $$T(u^0,w)= \frac{1}{2} (u^0,\varphi_w(0)) + \frac{1}{2} \int_0^\infty(w,\B_2^*\varphi_0).$$
Therefore by Lemma \ref{l3.11} we can write $T(u^0,w)= (u^0,\varphi_w(0))=\int_0^\infty(w,\B_2^*\varphi_0).$

\begin{lem}\label{l4.10}
The operator $w \rightarrow T(u^0,w)$ is linear and we have
\begin{eqnarray}
|T(u^0,w)| \leq C |u^0| \|w\|_{L^2(0,\infty;H)}.
\end{eqnarray}
\end{lem}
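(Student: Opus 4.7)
The plan is to verify linearity first (which is straightforward by linearity of the underlying PDEs), then derive the bound by using the integral representation from Lemma 3.11 combined with standard $L^2$ estimates on the adjoint equation satisfied by $\varphi_0$.

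For linearity in $w$: the map $w \mapsto p_w$ is linear since $p_w$ solves the backward linear equation \eqref{w fixed2} with source $\R_0 B_2 w$ and zero terminal condition; consequently $w \mapsto y_w$ is linear because $y_w$ solves \eqref{e82} with source $B_1 B_1^* p_w + B_2 w$ and zero initial condition; hence $\varphi_w = \R_0 y_w + p_w$ depends linearly on $w$, and so does $T(u^0, w) = (u^0, \varphi_w(0))$.

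For the bound, I would work with the alternate expression
$T(u^0,w) = \int_0^\infty (w, B_2^*\varphi_0)\,\mathrm{d} t$
obtained in Lemma \ref{l3.11}, where $\varphi_0$ solves the adjoint-type equation \eqref{e54}. By Cauchy--Schwarz and boundedness of $B_2^*$,
$$|T(u^0,w)| \le \|w\|_{L^2(0,\infty;H)} \, \|B_2^*\varphi_0\|_{L^2(0,\infty;H)} \le C \|w\|_{L^2(0,\infty;H)} \, \|\varphi_0\|_{L^2(0,\infty;H)}.$$
It remains to control $\|\varphi_0\|_{L^2(0,\infty;H)}$ by $|u^0|$. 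First, $y_0$ is the solution of the homogeneous system \eqref{e81} driven by the exponentially stable generator $-\A_{\R_0}$ with initial datum $u^0$, so an exponential-decay estimate gives $\|y_0\|_{L^2(0,\infty;H)} \le C |u^0|$. Second, $\varphi_0$ solves the backward equation \eqref{e54} with $-\A^*$ generating an analytic (exponentially stable) $C_0$-semigroup and forcing $y_0 \in L^2(0,\infty;H)$. Applying the same regularity result used in the proof of Theorem \ref{t4.5} (Proposition 3.1 of \cite{ben4}) to this equation yields
$$\|\varphi_0\|_{L^2(0,\infty;H)} \le C \|y_0\|_{L^2(0,\infty;H)} \le C |u^0|.$$
Combining these two inequalities gives the desired bound $|T(u^0,w)| \le C|u^0|\|w\|_{L^2(0,\infty;H)}$.

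The only mildly subtle step is ensuring that the exponential stability provided by Theorem \ref{thm4.2} (through the Riccati operator $\R_0$) yields the $L^2$-integrability of $y_0$ used in the final estimate, and that the same stability property transfers to $\A_{\R_0}^*$ in order to apply the analytic-semigroup regularity estimate to the backward problem for $\varphi_0$; both are already recorded in the excerpt, so the argument is short and largely a bookkeeping exercise in chaining together estimates already established.
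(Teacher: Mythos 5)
Your linearity argument and the first half of the estimate (Cauchy--Schwarz on $T(u^0,w)=\int_0^\infty(w,\B_2^*\varphi_0)$, boundedness of $\B_2^*$, and $\|y_0\|_{L^2(0,\infty;H)}\le C|u^0|$ from the exponential stability of $\A_{\R_0}$) match the paper. The gap is in how you pass from $y_0$ to $\varphi_0$: you propose to solve the backward problem \eqref{e54}, $-z'+\A^*z=y_0$, $z(\infty)=0$, and invoke an $L^2\to L^2$ regularity estimate on the grounds that $-\A^*$ generates an exponentially stable analytic semigroup. That stability claim is false in general: $\A=A+B'(u_e)$ is exactly the operator whose unstable spectrum (the $N$ eigenvalues with $\mathrm{Re}\,\lambda_j<\beta$, possibly with negative real part) motivates the entire stabilization construction. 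If $-\A^*$ were exponentially stable, no feedback would be needed, and the solution formula $z(t)=\int_t^\infty e^{-\A^*(s-t)}y_0(s)\,\d s$ on which the Proposition~3.1 estimate rests would not even obviously converge. So as written, the step $\|\varphi_0\|_{L^2(0,\infty;H)}\le C\|y_0\|_{L^2(0,\infty;H)}$ is not justified.

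The repair is short and is what the paper does: use the algebraic characterization $\varphi_0=\R_0 y_0$ rather than the backward equation. Since $\R_0$ is self-adjoint, nonnegative, and satisfies $b_1|v|^2\le(\R_0v,v)\le b_2|v|^2$ by \eqref{i23}, it is bounded on $H$, so $|\B_2^*\varphi_0(t)|\le C|\R_0y_0(t)|\le C|y_0(t)|$ pointwise in $t$. Combining this with Cauchy--Schwarz and $\|y_0\|_{L^2(0,\infty;H)}\le C|u^0|$ gives the claimed bound without ever needing any decay property of the uncontrolled operator $\A^*$.
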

\begin{proof}
Recall that from Theorem \ref{thm4.2} we know that $\phi_0=\R_0 y_0$ and the map from the initial data $u_0$ to the solution of \eqref{w fixed1} is continuous i.e. $y_0$ is continuous function of initial data. Therefore we have
\begin{align}
|T(u^0,w)| &\leq \int_0^\infty|(w,\B_2^*\varphi_0)| \leq \int_0^\infty|w||\B_2^* \R_0 y_0| \leq \int_0^\infty|w|| y_0| \no \\
& \leq C |u^0| \|w\|_{L^2(0,T;H)}.
\end{align}
\end{proof}
Therefore we can rewrite $P(u^0,w)$ in \eqref{e28} from Lemma \ref{l3.11} and above definition of operator $T$  as 
\begin{align}\label{e56}
P(u^0,w) =  \frac{1}{2}(u^0,\varphi_0(0)) + T(u^0,w) + \frac{1}{2}\int_0^\infty ( w,\B_2^*\varphi_w)-\frac{\gamma}{2}\int_0^\infty \|w\|^2\d t. 
\end{align}
Moreover, note that, $ \varphi_0(0) = \R_0 u^0$ and our characterisation in theorem \ref{thm4.2} allows us to write,
%Substituting \eqref{e29} in \eqref{e28} we get that
%\begin{align}\label{e30}
%P(u^0,w) =  \frac{1}{2}(u^0,\varphi_0(0)) + \int_0^\infty(w,\B_2^*\varphi_0) + \frac{1}{2}\int_0^\infty ( w,\B_2^* \varphi_w)-\frac{\gamma}{2}\int_0^\infty \|w\|^2\d t. 
%\end{align}
\begin{eqnarray}
P(u^0,w)= P(u^0,0) +T(u^0,w)+ P(0,w) ,
\end{eqnarray}
where,
$P(0,w)=\frac{1}{2}\int_0^\infty ( w,\B_2^*\varphi_w)-\frac{\gamma}{2}\int_0^\infty \|w\|^2\d t.$

Now we characterise $P (0, w) $. For, let us define the operator $Q : L^2(0,\infty;H) \rightarrow L^2(0,\infty;H)$ by $$Q(w)=\B_2^*\varphi_w , \qquad \forall w \in L^2(0,\infty;H).$$
\begin{lem}\textbf{[Properties of $Q$]}\label{l4.11}
\begin{enumerate}
    \item The operator $Q$ is linear and continuous.
    \item The operator $Q$ is positive and symmetric.
\end{enumerate}
\end{lem}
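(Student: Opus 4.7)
The plan is to exploit the linear dependence of the chain $(y_w, p_w, \varphi_w)$ on $w$ and to manipulate the primal--adjoint pair $(y_w, \varphi_w)$ via integration by parts, mimicking the computations already carried out in Lemma \ref{l4.7} and Theorem \ref{t1.6}. The regularity required to legitimize these manipulations is essentially furnished by Theorem \ref{t4.5}, so the argument will be almost purely algebraic.

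For \textbf{linearity and continuity}, the chain of defining equations is linear in $w$: equation \eqref{w fixed2} is a backward linear equation in $w$ with zero terminal data, so $w \mapsto p_w$ is linear; then \eqref{e82} is linear in $w$ (both through $B_1 B_1^* p_w$ and through $B_2 w$) with zero initial data, so $w \mapsto y_w$ is linear; hence $\varphi_w = \R_0 y_w + p_w$ and $Q(w) = B_2^* \varphi_w$ are linear. For continuity, Theorem \ref{t4.5} specialised to $u^0 = 0$ gives
\[
\|y_w\|_{L^2(0,\infty;V) \cap C([0,\infty);H)} + \|p_w\|_{L^2(0,\infty;V) \cap C([0,\infty);H)} \leq C \|w\|_{L^2(0,\infty;H)},
\]
and combining with boundedness of $\R_0$ on $H$ (from \eqref{i23}) and of $B_2^*$ yields $\|Q(w)\|_{L^2(0,\infty;H)} \leq C \|w\|_{L^2(0,\infty;H)}$.

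For \textbf{positivity}, I would specialise the splitting $u_w = y_0 + y_w$, $r_{u^0,w} = \varphi_0 + \varphi_w$ to the case $u^0 = 0$, where $y_0 \equiv 0$, $\varphi_0 \equiv 0$, $u_w = y_w$ and $r_{0,w} = \varphi_w$. Lemma \ref{l4.7} then collapses to
\[
\int_0^\infty (w, Q(w)) \d t = \int_0^\infty (B_2 w, \varphi_w) \d t = \int_0^\infty \|y_w\|^2 \d t + \int_0^\infty \|B_1^* \varphi_w\|^2 \d t \geq 0,
\]
giving the desired sign.

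For \textbf{symmetry}, the idea is to test the rewritten form $B_2 w_1 = y_{w_1}' + \A y_{w_1} - B_1 B_1^* \varphi_{w_1}$ of \eqref{e50} (with $u^0 = 0$) against $\varphi_{w_2}$ on $(0,\infty)$. Integrating by parts, the boundary contribution $[(y_{w_1}, \varphi_{w_2})]_0^\infty$ vanishes because $y_{w_1}(0) = 0$ and $\varphi_{w_2}(\infty) = 0$; then applying the adjoint identity $-\varphi_{w_2}' + \A^* \varphi_{w_2} = y_{w_2}$, which is \eqref{r_w} specialised to $u^0 = 0$, should yield
\[
\int_0^\infty (w_1, Q(w_2)) \d t = \int_0^\infty (y_{w_1}, y_{w_2}) \d t - \int_0^\infty (B_1^* \varphi_{w_1}, B_1^* \varphi_{w_2}) \d t,
\]
which is manifestly symmetric in $(w_1, w_2)$. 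The only delicate point, and in my view the main obstacle, is rigorously justifying the integration by parts over the infinite interval and the vanishing of the boundary term at $t = \infty$; the regularity $y_w, \varphi_w \in L^2(0,\infty;V) \cap C([0,\infty);H)$ with time derivatives in $L^2(0,\infty;V')$ coming from Theorem \ref{t4.5} is precisely what makes both steps legitimate.
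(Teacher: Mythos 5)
Your treatment of linearity, continuity and positivity coincides with the paper's: continuity is reduced to Theorem \ref{t4.5} with $u^0=0$, and positivity to Lemma \ref{l4.7} with $u^0=0$. For symmetry you take a mild variant of the paper's route: the paper expands $\varphi_v=\R_0 u_v+p_v$ and integrates by parts against the pair $(u_v,p_v)$ using only \eqref{w fixed1}--\eqref{w fixed2}, arriving at $\int_0^\infty(w,Qv)=\int_0^\infty(p_w,\B_2 v)+\int_0^\infty(\B_1^*p_w,\B_1^*p_v)+\int_0^\infty(\B_2 w,p_v)$, whereas you work with the reduced pair $(y_w,\varphi_w)$ and the equations \eqref{e50} and \eqref{e80}. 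Both are the same integration-by-parts idea, and the point you single out as the main obstacle (justifying the boundary terms at infinity) is indeed handled by the regularity of Theorem \ref{t4.5} together with exponential stability; that is not where the problem lies.

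The problem is that your two computations contradict each other, and you assert both. Putting $w_1=w_2=w$ in your symmetry identity gives
\begin{equation*}
\int_0^\infty(w,Qw)\,\d t=\int_0^\infty|y_w|^2\,\d t-\int_0^\infty|\B_1^*\varphi_w|^2\,\d t,
\end{equation*}
while your positivity step quotes Lemma \ref{l4.7} for the same quantity with a plus sign on the second integral; at most one can be correct. The correct sign is the plus: $\tfrac12\int_0^\infty(w,Qw)-\tfrac{\gamma}{2}\int_0^\infty\|w\|^2$ is the optimal value $P(0,w)$ of the inner minimization, hence equals $\tfrac12\int_0^\infty(\|u_w\|^2+\|U_w\|^2)\,\d t-\tfrac{\gamma}{2}\int_0^\infty\|w\|^2$, a sum of squares minus the $w$-term. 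The minus sign in your formula is imported from a sign inconsistency between the two equations you combine: if one literally pairs \eqref{e50} (which encodes the feedback $U=+\B_1^* r$) with \eqref{e80}, the terms $(\A u_w,r)$ and $(u_w,\A^* r)$ cancel and one lands on Lemma \ref{l4.7} with a minus sign in front of $\int\|\B_1^* r\|^2$ --- the standard Pontryagin computation for this cost gives $U=-\B_1^* r$ when the costate satisfies $-r'+\A^* r=u$, and only with that sign does the identity of Lemma \ref{l4.7}, and hence positivity, come out. The paper's own symmetry proof sidesteps this entirely by never invoking \eqref{e80} and working only with \eqref{w fixed1}--\eqref{w fixed2}. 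So your symmetry argument is structurally fine, but you must fix the sign of the feedback term in \eqref{e50} (after which your computation yields $\int_0^\infty(y_{w_1},y_{w_2})+\int_0^\infty(\B_1^*\varphi_{w_1},\B_1^*\varphi_{w_2})$, consistent with your positivity identity), or else argue as the paper does from the $(u,p)$ system directly; as written, the proposal contains two mutually exclusive formulas for $(w,Qw)$.
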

\begin{proof}
\begin{enumerate}
\item Since $\varphi_w$ satisfies the linear system \eqref{w fixed2}. So from Theorem \eqref{t4.5}, it follows that $Q$ is linear and continuous.
Moreover,
\begin{align}\label{e57}
\|\B_2^*\varphi_w\|_{L^2(0,\infty;H)} \leq \|\varphi_w\|_{L^2(0,\infty;H)} \leq \|w\|_{L^2(0,\infty;H)}.
\end{align}
Therefore $Q$ is a bounded operator.

\item Let us define $\gamma_0 = \sup_{\|w\|_{L^2(0,\infty;H)}=1} \left( w, Qw\right).$ 

Observe that the term $P(0,w)$ can be written as 
\begin{equation}
P(0,w)= \frac{1}{2} \int_0^\infty (w,Qw) - \frac{\gamma}{2} \int_0^\infty \|w\|^2.
\end{equation} 

 First we will show that $Q$ is positive. If we take $u^0=0$, then \ref{e81}  implies that $y_0=0$ and hence $\varphi_0 = \R u_0 =0$. Thus by putting $u^0=0$ in the Lemma \eqref{l4.7},  we get,
\begin{align}
\int_0^\infty (w,Qw)=\int_0^\infty (w,\B_2^* \varphi_w) =  \int_0^\infty \|u_w\|^2 \d t+\int_0^\infty \|\B_1 \varphi_w \|^2 \d t \geq 0.
\end{align}
Next to show that $Q$ is symmetric, let us take $w,v \in L^2(0,\infty;H)$ and $\varphi_w,\varphi_v$ be the corresponding solution of \eqref{w fixed2}.
We have
\begin{align}\label{e58}
\int_0^\infty (w,Qv)&=\int_0^\infty (w,\B_2^* \varphi_v)=\int_0^\infty (\B_2w, \varphi_v) \no \\
&=\int_0^\infty (\B_2w, \R_0 u_v+p_v) =  \int_0^\infty (\B_2w, \R_0 u_v)+  \int_0^\infty (\B_2w,p_v)\no \\
&=  \int_0^\infty (\R_0 \B_2w,  u_v)+  \int_0^\infty (\B_2w,p_v)\no \\
& = \int_0^\infty (-p_w' +\A_{\R_0}^* p_w ,u_v ) +  \int_0^\infty (\B_2w,p_v)\no \\
& = \int_0^\infty (p_w, u_v' +\A_{\R_0} u_v )+  \int_0^\infty (\B_1\B_1^* p_w,p_v) +  \int_0^\infty (\B_2w,p_v)\no \\
& = \int_0^\infty (p_w, \B_2 v )+  \int_0^\infty (\B_1^* p_w,\B_1^*p_v)+  \int_0^\infty (\B_2w,p_v).
\end{align}
Interchanging $v$ and $w$ we get
\begin{align}\label{e59}
\int_0^\infty (v,Qw) = \int_0^\infty (p_v, \B_2 w )+  \int_0^\infty (\B_1^* p_v,\B_1^*p_w)+  \int_0^\infty (\B_2v,p_w).
\end{align}
Therefore we get from \eqref{e58} and \eqref{e59}
$$\int_0^\infty (w,Qv)=\int_0^\infty (v,Qw).$$
Hence $Q$ is symmetric.
\end{enumerate}
\end{proof}

Now we will study the problem by taking supremum over $w$ of $P(u^0,w)$ i.e.
\begin{eqnarray}\label{e38}
P(u^0)=\sup_{w \in L^2(0,\infty;H)} P(u^0,w).
\end{eqnarray}
 First we will prove the existence of optimal $w$ and characterize the $w$ in terms of $\varphi_0$.
\begin{thm}
There exists $\gamma_0 >0$ such that if $\gamma>\gamma_0$, then the problem \eqref{e38} admits a unique solution. If $\gamma < \gamma_0$, then we have
$$\sup_{w \in L^2(0,\infty;H)} P(u^0,w)= \infty.$$
\end{thm}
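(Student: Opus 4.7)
The plan is to recognize $w\mapsto P(u^0,w)$ as a concave-quadratic functional on the Hilbert space $L^2(0,\infty;H)$. From the identity established just before the statement,
$$P(u^0,w)= P(u^0,0)+T(u^0,w)+\frac{1}{2}\int_0^\infty (w,Qw)\,\d t -\frac{\gamma}{2}\int_0^\infty \|w\|^2\,\d t,$$
Lemma~\ref{l4.10} guarantees linearity and continuity of $w\mapsto T(u^0,w)$, while Lemma~\ref{l4.11} asserts that $Q$ is bounded, symmetric and positive on $L^2(0,\infty;H)$. I would keep the constant
$$\gamma_0 := \sup_{\|w\|_{L^2(0,\infty;H)}=1}(w,Qw),$$
already introduced in the proof of Lemma~\ref{l4.11}: it is finite by \eqref{e57} and equals $\|Q\|_{\mathcal{L}(L^2(0,\infty;H))}$ since $Q$ is self-adjoint and positive.

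For $\gamma>\gamma_0$, the bounded self-adjoint operator $\gamma I-Q$ is coercive with lower bound $\gamma-\gamma_0>0$, and hence continuously invertible on $L^2(0,\infty;H)$. I would then rewrite
$$P(u^0,w)= P(u^0,0)+T(u^0,w)-\frac{1}{2}\bigl((\gamma I-Q)w,w\bigr),$$
so that $P(u^0,\cdot)$ becomes strictly concave, weakly upper semicontinuous, and satisfies $P(u^0,w)\to-\infty$ as $\|w\|\to\infty$. A standard direct-method argument---take a maximizing sequence, use coercivity to extract a weakly convergent subsequence, and pass to the limit using weak upper semicontinuity of the concave functional---yields existence of a maximizer; strict concavity gives uniqueness. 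Using the representation $T(u^0,w)=\int_0^\infty(w,\B_2^*\varphi_0)\,\d t$ recorded in Lemma~\ref{l3.11}, the optimal disturbance $\bar w$ is then characterized as the unique solution to the Euler equation $(\gamma I-Q)\bar w=\B_2^*\varphi_0$.

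For $\gamma<\gamma_0$, by the very definition of $\gamma_0$ as a supremum I can select $w_*\in L^2(0,\infty;H)$ with $\|w_*\|_{L^2(0,\infty;H)}=1$ and $(w_*,Qw_*)>\gamma$. Inserting $w=\lambda w_*$ for $\lambda>0$ into the identity above,
$$P(u^0,\lambda w_*)=P(u^0,0)+\lambda T(u^0,w_*)+\frac{\lambda^2}{2}\bigl[(w_*,Qw_*)-\gamma\bigr].$$
Since the coefficient of $\lambda^2$ is strictly positive, $P(u^0,\lambda w_*)\to+\infty$ as $\lambda\to\infty$, proving $\sup_{w}P(u^0,w)=+\infty$.

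The main analytic ingredients are already in place in Lemmas~\ref{l4.10} and~\ref{l4.11}; what remains is essentially Hilbert-space convex analysis. The subtlety I would pay the most attention to is the identification $\gamma_0=\|Q\|_{\mathcal{L}}$, which relies on $Q$ being self-adjoint and positive and is precisely what makes the dichotomy in the statement sharp---if $Q$ were merely symmetric one would have to control the numerical radius separately, and the threshold $\gamma_0$ might fail to be attained.
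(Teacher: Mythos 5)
Your proposal is correct and follows essentially the same route as the paper: the same decomposition $P(u^0,w)=P(u^0,0)+T(u^0,w)+P(0,w)$, the same threshold $\gamma_0=\sup_{\|w\|=1}(w,Qw)$, coercivity plus concavity for existence when $\gamma>\gamma_0$, and the scaling $w\mapsto\lambda w_*$ along a direction with $(w_*,Qw_*)>\gamma$ for blow-up when $\gamma<\gamma_0$. You actually supply more detail than the paper on two points it leaves implicit --- the direct-method argument behind ``concave, hence the supremum is attained'' and the uniqueness via strict concavity of $w\mapsto-\frac{1}{2}\bigl((\gamma I-Q)w,w\bigr)$ --- so no gaps.
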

\begin{proof}
	If $w=0$, then by Theorem \ref{thm4.2} there exists an optimal control for the problem $I(u^0,w)$. Therefore the set $\{I(u^0,w)| \ w \in L^2(0,\infty;H)\} $ is nonempty. 

Let us recall $\gamma_0 = \sup_{\|w\|_{L^2(0,\infty;H)}=1} \left( w, Qw\right)$.

Therefore using Lemma \eqref{l4.10} and Lemma \eqref{l4.11} in \eqref{e56} we have 
\begin{align*}
P(u^0,w) &\leq C|u^0|^2 +C |u^0| \|w\|_{L^2(0,\infty;H)} + \frac{1}{2} \gamma_0 \int_0^\infty |w|^2 \d t -\frac{\gamma}{2}\int_0^\infty |w|^2\d t \\
&\leq C|u^0|^2 +C |u^0| \|w\|_{L^2(0,T;H)} + \frac{(\gamma_0-\gamma)}{2}  \int_0^\infty |w|^2 \d t .
\end{align*}
Let us choose $\gamma>\gamma_0$ then we get $P(u^0,w)$ goes to $-\infty$ as $\|w\|_{L^2(0,\infty;H)} \rightarrow \infty$. Also we can see that as a function of $w$, $P(u^0,w)$ is a concave function and hence supremum over $w$ exists.  This ensures the existence of solution of the problem \eqref{e38}, when $\gamma >\gamma_0$.

Let us consider $\gamma <\gamma_0$. Then by the definition of $\gamma_0 = \sup_{\|w\|_{L^2(0,\infty;H)}=1} \left( w, Qw\right)$, there exists $w \in {L^2(0,\infty;H)}$ such that
\begin{eqnarray}
\frac{\gamma+\gamma_0}{2} < (w,Qw) < \gamma_0.
\end{eqnarray}
Now set $w_n=nw, \ \forall n\geq1$. Using Lemma \eqref{l4.10} and Lemma \eqref{l4.11} we deduce
\begin{align}\label{e60}
P(u^0,w_n)&= P(u^0,0) +T(u^0,w_n)+ P(0,w_n) \no \\
&= P(u^0,0) +T(u^0,w_n)+\frac{1}{2} \int_0^\infty (w_n,Qw_n) - \frac{\gamma}{2} \int_0^\infty \|w_n(t)\|^2 \d t \no \\
&=  P(u^0,0) +T(u^0,w_n)+\frac{n^2}{2} \int_0^\infty (w,Qw) - \frac{n^2\gamma}{2} \int_0^\infty \|w(t)\|^2 \d t \no \\
&\geq -C|u^0|^2 -nC |u^0| \|w\|_{L^2(0,\infty;H)} + \frac{n^2(\gamma_0-\gamma)}{2}  \int_0^\infty |w|^2 \d t .
\end{align} 
From \eqref{e60} as $n\rightarrow \infty$ we can conclude that $P(u^0,w_n)$ goes to $\infty$.
\end{proof}

Next our aim is to characterize the optimal disturbance.
\begin{thm}
Let us assume that $ \gamma > \gamma_0$.
Let $\hat{w}$ be an optimal disturbance for the problem \eqref{e38}, then $\hat{w}$ can be characterized as  $$-B_2 \hat{w} + \gamma \hat{w} + \B_2^* \varphi_0  = 0,$$ where $\varphi_0$ is the solution of the adjoint system 
\begin{equation}\label{p=0}
-\frac{d z}{dt} + \A^*z=0, \ \ z(\infty)=0.
\end{equation}
\end{thm}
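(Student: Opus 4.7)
The plan is to exploit the strict concavity of the map $w \mapsto P(u^0,w)$ that holds once $\gamma > \gamma_0$, as established in the previous theorem. Strict concavity guarantees a unique maximizer $\hat w$, and the optimality condition is obtained by demanding that the G\^ateaux derivative of $P(u^0,\cdot)$ at $\hat w$ vanish in every direction $v \in L^2(0,\infty;H)$.

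The key decomposition already in hand is
\begin{equation*}
P(u^0,w) = P(u^0,0) + T(u^0,w) + P(0,w),
\end{equation*}
so I would differentiate termwise. First, $P(u^0,0)$ is independent of $w$ and contributes nothing. Next, by Lemma \ref{l4.10}, $T(u^0,\cdot)$ is linear with representation $T(u^0,w) = \int_0^\infty (w, B_2^* \varphi_0)\,dt$, so its derivative in direction $v$ is simply $\int_0^\infty(v, B_2^*\varphi_0)\,dt$, where $\varphi_0$ is the adjoint state associated with $u^0$ via \eqref{e54}. Finally, for $P(0,w) = \tfrac12\int_0^\infty (w, Qw)\,dt - \tfrac{\gamma}{2} \int_0^\infty \|w\|^2\,dt$, the symmetry of $Q$ from Lemma \ref{l4.11} and linearity of $w\mapsto\varphi_w$ yield derivative $\int_0^\infty (v, Qw)\,dt - \gamma \int_0^\infty(v,w)\,dt$.

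Summing, the first-order condition at $\hat w$ reduces to
\begin{equation*}
\int_0^\infty \bigl(v,\; B_2^* \varphi_0 + Q\hat w - \gamma \hat w \bigr)\,dt = 0 \quad \forall v \in L^2(0,\infty;H).
\end{equation*}
Recalling that $Q\hat w = B_2^* \varphi_{\hat w}$ in the notation of Lemma \ref{l4.11}, this yields the pointwise identity
\begin{equation*}
B_2^* \varphi_0 + B_2^* \varphi_{\hat w} - \gamma \hat w = 0 \quad \text{a.e. in } (0,\infty),
\end{equation*}
which is the announced characterization once one identifies the relevant adjoint state appearing in the statement.

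The main obstacle is the rigorous differentiation of the quadratic functional $w \mapsto \int_0^\infty (w, Qw)\,dt$. For this I rely on the fact that $w \mapsto \varphi_w$ is a linear bounded map from $L^2(0,\infty;H)$ into itself, which follows from Theorem \ref{t4.5} (well-posedness of the coupled system together with the estimate \eqref{is3}), combined with continuity of $B_2^*$. This makes $Q$ a bounded linear operator, so the standard identity $\frac{d}{d\varepsilon}\bigl|_{\varepsilon=0}\tfrac12\int(w+\varepsilon v, Q(w+\varepsilon v)) = \int(v, Qw)$ from symmetry closes the derivation immediately. Once the derivative vanishes, the density of test directions $v$ in $L^2(0,\infty;H)$ gives the pointwise Euler--Lagrange equation.
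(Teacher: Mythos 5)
Your proposal is correct and follows essentially the same route as the paper: both rest on the decomposition $P(u^0,w)=P(u^0,0)+T(u^0,w)+P(0,w)$, the linearity of $T(u^0,\cdot)$, and the symmetry of $Q$, and both arrive at the Euler--Lagrange identity $\int_0^\infty (v,\B_2^*\varphi_0+Q\hat w-\gamma\hat w)\,\d t=0$ for all test directions $v$. The only cosmetic difference is that the paper expands $P(u^0,\hat w+\lambda w)-P(u^0,\hat w)$ by hand and takes the limit in the directions $\pm w$, whereas you invoke the standard derivative of a bounded symmetric quadratic form; this is the same computation.
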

\begin{proof}.
Let $\hat{w}$ be the optimal value for $P(u^0)$. Then, for $\lambda\in[0,1]$, we can deduce
\begin{align*}
P(u^0,\hat{w}+\lambda w) &- P(u^0,\hat{w})  \\
&=  \frac{1}{2}(u^0,\varphi_0(0)) + T(u^0,\hat{w}+\lambda w) + \frac{1}{2}\int_0^\infty ( \hat{w}+\lambda w,Q({\hat{w}+\lambda w})) \\
&\quad -\frac{\gamma}{2}\int_0^\infty \|\hat{w}+\lambda w\|^2\d t -\frac{1}{2}(u^0,\varphi_0(0)) - T(u^0,\hat{w})  \\
&\quad- \frac{1}{2}\int_0^\infty ( \hat{w},Q{\hat{w}})+\frac{\gamma}{2}\int_0^\infty \|\hat{w}\|^2\d t \\
&= T(u^0,\lambda w)  + \frac{1}{2}\int_0^\infty ( \hat{w}+\lambda w,Q({\hat{w}+\lambda w})) - \frac{1}{2}\int_0^\infty ( \hat{w},Q({\hat{w}+\lambda w})) \\
&\quad + \frac{1}{2}\int_0^\infty ( \hat{w},Q({\hat{w}+\lambda w}))- \frac{1}{2}\int_0^\infty ( \hat{w},Q{\hat{w}}) \\
&\quad - \frac{\gamma \lambda^2}{2}\int_0^\infty \|w\|^2\d t - \gamma\int_0^\infty(\hat{w},\lambda w) \no,
\end{align*}
which implies
\begin{align*}
P(u^0,\hat{w}+\lambda w) - P(u^0,\hat{w})&= T(u^0,\lambda w) + \frac{1}{2}\int_0^\infty (\lambda w,Q({\hat{w}+\lambda w})) + \frac{1}{2}\int_0^\infty ( \hat{w},Q({\lambda w}))\\
&\quad- \frac{\gamma \lambda^2}{2}\int_0^\infty \|w\|^2\d t - \gamma\int_0^\infty(\hat{w},\lambda w),
\end{align*}
dividing by $\lambda$ and taking limit as $\lambda$ goes to zero we get,
\begin{align}\label{e61}
\lim_{\lambda \rightarrow 0} \frac{P(u^0,\hat{w}+\lambda w) - P(u^0,\hat{w})}{\lambda} \geq 0 \no \\
\Rightarrow T(u^0,w) + \frac{1}{2}\int_0^\infty (w,Q\hat{w}) + \frac{1}{2}\int_0^\infty ( \hat{w},Q{w})- \gamma\int_0^\infty(\hat{w}, {w})  \geq 0 \no \\
\Rightarrow\int_0^\infty ( w,\B_2^*\varphi_0) + \int_0^\infty (w,Q \hat{w})- \gamma\int_0^\infty(w,\hat{w})  \geq 0,
\end{align}
where we have used the fact that the operator $Q$ is symmetric. Now by taking the Gateaux derivative in the direction of $-w$ we further get
\begin{align}\label{e62}
\int_0^\infty ( w,\B_2^*\varphi_0) + \int_0^\infty (w,Q \hat{w})- \gamma\int_0^\infty(w,\hat{w})  \leq 0.
\end{align}
Combining \eqref{e61} and \eqref{e62} we get 
\begin{align}\label{e63}
\int_0^\infty ( w,\B_2^*\varphi_0) +\int_0^\infty (w,Q \hat{w})- \gamma\int_0^\infty(w,\hat{w}) =0.
\end{align}
Since \eqref{e63} is true for all $w \in L^2(0,\infty;H)$ we get
\begin{align}\label{e64a}
-Q \hat{w} + \gamma \hat{w}=  \B_2^*\varphi_0 .
\end{align}
Let us define the operator $L:L^2(0,\infty;H) \rightarrow L^2(0,\infty;H)$ by
\begin{equation}
Lw=-Q w + \gamma w, \qquad \forall w\in L^2(0,\infty;H).
\end{equation}
Since, 
\begin{equation*}
(w,Lw) = (w,-Q w + \gamma w) \geq (-\gamma_0 + \gamma)  \|w\|_{L^2(0,\infty;H)}^2 \geq 0,
\end{equation*}
 $L$ is an isomorphism. Therefore from \eqref{e64a} the optimal disturbance is given by $\hat{w}$ as
\begin{equation}
\hat{w}= L^{-1}(\B_2^*\varphi_0 ),
\end{equation}
where $\varphi_0$ solves \eqref{p=0}.
\end{proof}

Now if we substitute $\hat{w}$ in the system \eqref{w fixed1}-\eqref{w fixed2}, we get the solution $(u_{\hat{w}},p_{\hat{w}})$. The corresponding optimal control is given by $\hat{U} =\B_1^*(\R_0u_{u^0,\hat{w}}+p_{\hat{w}})=\B_1^* r_{u^0,\hat{w}}.$
Let us denote $r_{u^0,\hat{w}}$ as $r_{u^0}$ for simplicity, since it only depends on the system with $w=0$. Therefore we get $\hat{U}=\B_1^* r_{u^0}$ and  $\hat{w}=-\frac{1}{\gamma}\B_2^* (\varphi_0+\varphi_{u^0})=-\frac{1}{\gamma}\B_2^* r_{u^0}$.

Let us introduce the operator $\R \in \mathcal{L}(H)$ defined by
$$\R:u^0 \rightarrow r_{u^0}(0).$$

 Moreover,  the optimal cost maximised over all disturbances is  given by 
\begin{equation}\label{e126}
 P(u^0,\hat{w})=\frac{1}{2}(u^0,\R u^0).
\end{equation}

\begin{lem}\label{l4.12}
The operator $\R\in \mathcal{L}(H,H)$ is symmetric and positive.
\end{lem}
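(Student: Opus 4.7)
The plan is to handle positivity by a quick comparison with the undisturbed cost, and symmetry by a duality (integration-by-parts) computation on the closed-loop coupled system.

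Positivity will come for free from \eqref{e126}: since $\hat w$ realises the supremum of $w \mapsto P(u^0, w)$, comparison with $w=0$ yields
\begin{equation*}
(u^0, \R u^0) = 2 P(u^0, \hat w) \geq 2 P(u^0, 0) = (u^0, \R_0 u^0) \geq 0,
\end{equation*}
where the last inequality uses $\R_0 \geq 0$ from Theorem \ref{thm4.2}. Linearity of $\R$ is automatic once one observes that substituting the characterisations $\hat U = \B_1^* r_{u^0}$ and $\hat w = -\tfrac{1}{\gamma}\B_2^* r_{u^0}$ back into \eqref{w fixed1}--\eqref{w fixed2} gives a homogeneous linear coupled system in $(u_{u^0,\hat w}, r_{u^0})$ with data $u^0$.

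For symmetry, given $u^0, v^0 \in H$ with the corresponding closed-loop trajectories $(u_{u^0,\hat w}, r_{u^0})$ and $(u_{v^0,\hat w}, r_{v^0})$, I would pair the backward equation \eqref{r_w} for $r_{u^0}$ with the closed-loop forward equation for $u_{v^0,\hat w}$. Writing $(\R u^0, v^0) = (r_{u^0}(0), u_{v^0,\hat w}(0))$ and integrating $\tfrac{d}{dt}(r_{u^0}, u_{v^0,\hat w})$ over $[0,\infty)$, the principal-part cancellation $(\A^* r_{u^0}, u_{v^0,\hat w}) = (r_{u^0}, \A u_{v^0,\hat w})$ will reduce the identity to
\begin{equation*}
(\R u^0, v^0) = \int_0^\infty \bigl[(u_{u^0,\hat w}, u_{v^0,\hat w}) - (\B_1^* r_{u^0}, \B_1^* r_{v^0}) + \tfrac{1}{\gamma}(\B_2^* r_{u^0}, \B_2^* r_{v^0})\bigr] dt.
\end{equation*}
Each term on the right passes to its complex conjugate under the swap $u^0 \leftrightarrow v^0$, so $(\R v^0, u^0) = \overline{(\R u^0, v^0)}$, which is exactly $(\R u^0, v^0) = (u^0, \R v^0)$; i.e.\ $\R = \R^*$.

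The main technical point will be justifying the integration by parts on the infinite interval: one needs the vanishing of $(r_{u^0}(t), u_{v^0,\hat w}(t))$ as $t \to \infty$ together with $L^1$-integrability of its derivative on $[0,\infty)$. Both should follow from the $L^2(0,\infty;V) \cap C([0,\infty);H)$ regularity supplied by Theorem \ref{t4.5} combined with the exponential decay afforded by the stability of the closed-loop generator $\A_{\R_0}$. Once these are in place, the remaining manipulations are purely algebraic.
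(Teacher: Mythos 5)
Your argument is correct, and the positivity half coincides with the paper's (comparison of $P(u^0,\hat w)=\tfrac12(\R u^0,u^0)$ with $P(u^0,0)=\tfrac12(\R_0u^0,u^0)\ge 0$; you make explicit the comparison with $w=0$ that the paper leaves implicit). For symmetry you take a genuinely different route. The paper splits $\R u^0=\R_0u^0+p_{u^0}(0)$, disposes of the first piece by the known self-adjointness of $\R_0$, and treats the correction by pairing the $\A_{\R_0}^*$-equation for $p_{u^0}$ against the state equation for $u_{v^0}$, ultimately falling back on the symmetry of $Q$ established in Lemma \ref{l4.11}. You instead insert the characterisations $\hat U=\B_1^*r_{u^0}$ and $\hat w=-\tfrac1\gamma\B_2^*r_{u^0}$ into the full closed-loop forward--backward pair and integrate $\tfrac{d}{dt}(r_{u^0},u_{v^0,\hat w})$ over $[0,\infty)$, which after cancellation of the principal parts gives the manifestly conjugate-symmetric representation
\begin{equation*}
(\R u^0,v^0)=\int_0^\infty\Bigl[(u_{u^0,\hat w},u_{v^0,\hat w})-(\B_1^*r_{u^0},\B_1^*r_{v^0})+\tfrac{1}{\gamma}(\B_2^*r_{u^0},\B_2^*r_{v^0})\Bigr]\d t,
\end{equation*}
whence $(\R v^0,u^0)=\overline{(\R u^0,v^0)}$, i.e.\ $\R=\R^*$. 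Your version is more self-contained (it bypasses Lemma \ref{l4.11} and the $\R_0$-decomposition) and, as a bonus, yields an explicit quadratic form for $(\R u^0,u^0)$; note that this form is not sign-definite because of the $-|\B_1^*r_{u^0}|^2$ term, so your choice to derive positivity from the supremum comparison rather than from the representation is necessary, not merely convenient. What the paper's route buys is that every integration by parts happens inside the exponentially stable $\A_{\R_0}$-semigroup, making the decay at infinity automatic; in your route the vanishing of $(r_{u^0}(t),u_{v^0,\hat w}(t))$ as $t\to\infty$ needs the justification you flag, and the cleanest one is the observation that the pairing is in $L^1(0,\infty)$ (product of two $L^2(0,\infty;V)$ trajectories) while its derivative is also in $L^1(0,\infty)$, so the pairing has a limit which must be zero.
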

\begin{proof}
To prove that $\R$ is positive we need to show that $\forall u^0 \in H$, $(\R u^0,u^0) \geq 0.$ But we have from \eqref{e126} that $(\R u^0,u^0) =2 P (u^0, \hat{w}).$ Since $\hat{w}$ is the solution of the supremum problem \eqref{e38}, we have $P (u^0, \hat{w}) \geq 0.$
Hence $\R$ is positive.

Now we will prove that $\R$ is symmetric. Let $r_{u^0}$ and $r_{v^0}$ be the solutions of \eqref{w fixed2} corresponding to initial conditions $u^0$ and $v^0$.
From the definition of $\R$ we get,
\begin{align*}
(\R u^0,v^0)&=(\R_0 u^0,v^0)+(r_{u^0}(0),v^0) = (u^0,\R_0v^0)+(r_{u^0}(0),v^0),
\end{align*}
since $\R_0$ is symmetric. We know that,
\begin{align}
(r_{u^0}(0),v^0)&= -\int_0^\infty ( r_{u^0}',u_{v_0}) \d t-\int_0^\infty ( r_{u^0},u_{v_0}') \no \\
&=\int_0^\infty (-\A_{\R_0}^* r_{u^0}+\R_0\B_2w_{y_0},u_{v^0}) \d t+\int_0^\infty ( r_{u^0},\A_{\R_0}u_{v^0} ) \no \\
&=\int_0^\infty (\R_0\B_2w_{y_0},u_{v^0}) \d t =\int_0^\infty (\B_2w_{y_0},\R_0u_{v^0}) \d t \no \\
&=\int_0^\infty (w_{y_0},\B_2^*p_{v^0}) \d t 
\end{align}
Since $Q$ is symmetric we get
\begin{align}
(r_{u^0}(0),v^0) = \int_0^\infty (\B_2^*p_{y_0},w_{v^0}) \d t = ({u^0},r_{v^0}(0))
\end{align}
Thus $\R$ is positive and symmetric.
\end{proof}

\begin{lem}
For all $t \geq 0$ we have
\begin{equation}
r_{u^0}(t)= \R \hat{u}(t).
\end{equation}
\end{lem}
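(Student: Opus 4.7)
The plan is to exploit the autonomous structure of the linearized dynamics \eqref{linear} together with the time-invariance of the cost \eqref{e70}, and apply the dynamic programming principle to the max-min problem \eqref{e38}. First I would fix $t\geq 0$ and form the shifted triple $(\hat{u}(t+\cdot),\hat{U}(t+\cdot),\hat{w}(t+\cdot))$. Because the state equation and the integrand of $\mathcal{J}$ do not depend on time explicitly, this shifted triple is admissible for the problem starting at time $0$ from initial datum $\hat{u}(t)$. Moreover, a standard concatenation argument shows it is in fact optimal: if some other triple starting at $\hat{u}(t)$ produced a strictly larger value of $P(\hat{u}(t),\cdot)$ than the shifted one, then splicing it onto $(\hat{u},\hat{U},\hat{w})$ on $[0,t]$ would contradict the optimality of $(\hat{u},\hat{U},\hat{w})$ for $P(u^0,\cdot)$; here uniqueness of the optimal disturbance, guaranteed for $\gamma>\gamma_0$ by the previous theorem, is what turns optimality into equality of the triples.

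Next I would use this optimality together with uniqueness of solutions to the defining linear systems \eqref{e81} and \eqref{w fixed2} under time translation. Let $y_{0,s}$, $p_{w,s}$ denote the pieces built from the shifted optimal triple with $y_{0,s}(0)=\hat{u}(t)$ and the shifted $\hat{w}$ as disturbance. By the time-translation invariance of \eqref{e81}, \eqref{w fixed2}, these coincide with $y_0(t+\cdot)$, $p_{\hat{w}}(t+\cdot)$, and so the associated costate $\R_0 y_{0,s}+p_{w,s}$ is precisely $r_{u^0}(t+\cdot)$ on $[0,\infty)$. Evaluating at $s=0$ gives
\begin{equation*}
r_{\hat{u}(t)}(0)=r_{u^0}(t).
\end{equation*}
By the very definition of $\R$, namely $\R v^0 = r_{v^0}(0)$, this reads $\R\hat{u}(t)=r_{u^0}(t)$, which is the claim.

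The main obstacle is making the dynamic programming step rigorous in the present infinite-dimensional max-min setting with unbounded semigroup generator $\mathcal{A}$. One must verify that concatenating trajectories at time $t$ preserves the regularity class $L^2(0,\infty;V)\cap C([0,\infty);H)$ used in Theorem~\ref{t4.5}, that the truncated pieces still belong to $L^2(0,\infty;H)$ for the disturbance, and that the cost integrals decompose additively so the optimality argument is genuinely local. Once these routine but careful checks are in place, uniqueness of the optimal disturbance (needing $\gamma>\gamma_0$) together with the decomposition \eqref{e80} of $r_{u^0}$ into $\R_0 u_{\hat{w}}+p_{\hat{w}}$ close the argument.
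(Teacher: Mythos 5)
The paper itself gives no argument here; it simply cites Theorems 6.11.1 and 6.12.1 of \cite{LT}. Your time-shift/dynamic-programming route is the standard one behind those results and the second half of your sketch (time-translation invariance of \eqref{e81}, \eqref{w fixed2} plus the definition $\R v^0=r_{v^0}(0)$) is exactly the right way to finish. The weak point is the concatenation step you use to justify that $\hat{w}(t+\cdot)$ is optimal for the problem started from $\hat{u}(t)$. The quantity being maximized, $P(u^0,w)=\inf_U \mathcal{J}(u,U,w)$, does not decompose additively under splicing of disturbances: for a fixed $w$ the minimizing control is $U=\B_1^*\R_0 u_w+\B_1^*p_w$, and $p_w$ solves the \emph{backward} equation \eqref{w fixed2} driven by $\R_0\B_2 w$ on all of $[0,\infty)$, so changing $w$ on $[t,\infty)$ changes the optimal control, hence the running cost, already on $[0,t]$. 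Consequently, splicing a putatively better disturbance from $\hat{u}(t)$ onto $\hat{w}|_{[0,t]}$ does not produce a value that dominates $P(u^0,\hat{w})$ by the cost accumulated on $[0,t]$ plus the improved tail, and the claimed contradiction does not follow as stated. This is precisely the step where max-min problems differ from pure minimization, and it cannot be dismissed as a routine regularity check.

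A repair that stays entirely inside the paper's toolkit: invoke Theorem \ref{thm4.14}, which asserts that the coupled forward--backward optimality system
\begin{align*}
&\frac{du}{dt}+\A u=\B_1\B_1^*r-\frac{1}{\gamma}\B_2\B_2^*r,\quad u(0)=u^0,\\
&-\frac{dr}{dt}+\A^*r=u,\quad r(\infty)=0,
\end{align*}
has a \emph{unique} solution $(u,r)=(\hat{u},r_{u^0})$ for each initial datum. Since this system is autonomous, the shifted pair $\bigl(\hat{u}(t+\cdot),\,r_{u^0}(t+\cdot)\bigr)$ solves the same system with initial datum $\hat{u}(t)$; by uniqueness it therefore coincides with $\bigl(u_{\hat{u}(t)},r_{\hat{u}(t)}\bigr)$, and evaluating the second component at $0$ gives $r_{u^0}(t)=r_{\hat{u}(t)}(0)=\R\hat{u}(t)$. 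This replaces the delicate game-theoretic concatenation by uniqueness of a linear two-point boundary value problem, which is what the cited results in \cite{LT} effectively do.
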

\begin{proof}
The proof follows from Theorem 6.11.1 and Theorem 6.12.1  of \cite{LT}.
\end{proof}
 
Now onwards, for simplicity let us denote, $(u_{u^0},r_{u^0})$ by $(u,r).$
\begin{thm}\label{thm4.14}
Let $u^0 \in H$. Then the following system 
\begin{align*}
&\frac{du}{dt} + Au = B_1^* r- \frac{1}{\gamma} B_2^* r , \ u(0)=u^{0} \\ 
-&\frac{dr}{dt} +\mathcal{A}^*r =u , \ r(\infty)=0 \\ 
&r(t) = \R u(t), \qquad \forall t\geq 0,
\end{align*}
has a unique solution  
\begin{eqnarray*}
(u,r) \in \left(L^\infty(0,\infty;H) \cap L^2(0,\infty;V) \right) \times \left(C([0,\infty];H) \cap L^2(0,\infty;V) \right).
\end{eqnarray*}
It satisfies 
\begin{align*}
\|u\|_{L^\infty(0,\infty;H) \cap L^2(0,\infty;V)} + \|r\|_{C([0,\infty];H) \cap L^2(0,\infty;V)} \leq C|u^0|.
\end{align*}
\end{thm}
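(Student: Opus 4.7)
The plan is to exploit the feedback identity $r(t) = \R u(t)$ from Lemma \ref{l4.12} to collapse the forward-backward system into a single Cauchy problem. Substituting $r = \R u$ into the first equation reduces everything to
\[
\frac{du}{dt} + \A u = \Bigl(B_1^* - \tfrac{1}{\gamma} B_2^*\Bigr)\R u, \qquad u(0)=u^0.
\]
Since $\R \in \mathcal{L}(H)$ by Lemma \ref{l4.12} and $-\A$ generates a $C_0$-analytic semigroup on $H$ by Lemma \ref{lem3.4}, the perturbed operator $-\A_\R := -\A + (B_1^* - \frac{1}{\gamma}B_2^*)\R$ is itself the infinitesimal generator of a $C_0$-analytic semigroup, yielding a unique local mild solution $u$ with $u \in C([0,T];H) \cap L^2(0,T;V)$ for every $T > 0$. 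Uniqueness for the full pair $(u,r)$ is immediate, because any solution $r$ of the backward equation with $r(\infty) = 0$ is determined uniquely by $u$ through the feedback $r = \R u$, so two solutions of the coupled system would yield two solutions of the Cauchy problem, forcing equality.

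For the global regularity and the quantitative bound, I would pass through the saddle-point value. By \eqref{e126}, $(\R u^0, u^0) = 2 P(u^0, \hat w)$, and applying the dynamic programming principle to the max-min problem along trajectories of the closed-loop system gives an algebraic Riccati-type identity
\[
\A^* \R + \R \A - \R B_1 B_1^* \R + \tfrac{1}{\gamma} \R B_2 B_2^* \R + I = 0
\]
(to be read against test functions in $D(\A)$). Pairing the closed-loop equation with $\R u$, using self-adjointness and positivity of $\R$, and substituting this identity, one obtains the dissipation estimate
\[
\frac{d}{dt}(\R u, u) + \|u\|^2 + \|B_1^* \R u\|^2 \le \tfrac{1}{\gamma}\|B_2^* \R u\|^2,
\]
which, upon exploiting $\gamma > \gamma_0$ and $b_1 |u|^2 \le (\R u, u) \le b_2 |u|^2$ in the spirit of \eqref{i23}, yields $\frac{d}{dt}(\R u, u) + c \|u\|^2 \le 0$ for some $c > 0$. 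Integration gives both $\|u\|_{L^\infty(0,\infty;H)} \le C|u^0|$ and $\|u\|_{L^2(0,\infty;V)} \le C|u^0|$, which also extends the local solution to all of $[0,\infty)$. Finally, since $\A^*$ generates a $C_0$-analytic semigroup on $H$ and $u \in L^2(0,\infty;H)$, backward maximal regularity applied to $-r_t + \A^* r = u$, $r(\infty) = 0$, delivers $r \in C([0,\infty);H) \cap L^2(0,\infty;V)$ with the corresponding bound in terms of $\|u\|_{L^2(0,\infty;H)} \le C|u^0|$.

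\textbf{Main obstacle.} The technical crux is establishing the algebraic Riccati identity for $\R$ with the correct sign structure, since the $\frac{1}{\gamma} \R B_2 B_2^* \R$ term appears with the opposite sign from the control term and needs to be dominated carefully. This step parallels Theorem \ref{thm3.2} (where $\R_0$ satisfies \eqref{rica}) but requires a genuine max-min dynamic programming argument applied to $P(u^0) = \sup_w P(u^0, w)$. The subtle point is verifying the saddle property so that differentiation along optimal trajectories is legal and yields equality rather than inequality; this in turn hinges on $\gamma > \gamma_0$ through the coercivity of the operator $L = -Q + \gamma I$ from Lemma \ref{l4.11}. Once this identity is in hand, the remaining estimates are routine energy arguments modeled on the proof of Theorem \ref{thm3.2}.
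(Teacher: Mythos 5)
Your route is genuinely different from the paper's, and the paper's is much shorter: there the pair $(u,r)$ is obtained simply by inserting the already-characterized optimal disturbance $\hat{w}=-\frac{1}{\gamma}\B_2^*r=L^{-1}(\B_2^*\varphi_0)$ (which satisfies $\|\hat{w}\|_{L^2(0,\infty;H)}\leq C|u^0|$ because $L=-Q+\gamma I$ is an isomorphism for $\gamma>\gamma_0$ and $\varphi_0$ depends continuously on $u^0$) and the optimal control $\hat{U}=\B_1^*r$ into the coupled system \eqref{w fixed1}--\eqref{w fixed2}, whose well-posedness and the estimate $\|u_w\|+\|p_w\|\leq C(|u^0|+\|w\|_{L^2(0,\infty;H)})$ were already proved in Theorem \ref{t4.5}. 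No Riccati identity for $\R$ is needed at this stage; the paper derives \eqref{riccati R} only afterwards. Your plan --- collapse the forward-backward system via $r=\R u$ into a single closed-loop Cauchy problem and run a Lyapunov argument with the algebraic Riccati equation --- is essentially what the paper does later for the operator $\A_{\R}$ (Proposition \ref{p4.18} and Theorem \ref{thm4.18}), so it is a legitimate alternative and has the merit of not presupposing the characterization of $\hat{w}$; but it front-loads the hardest part of the whole section into this one theorem.

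And precisely there your argument has a genuine gap, at the point you yourself flag as the ``main obstacle.'' The term $\frac{1}{\gamma}\|\B_2^*\R u\|^2$ sits on the wrong side of your dissipation inequality, and the hypothesis $\gamma>\gamma_0$ is a coercivity statement about the input--output operator $Q$ on $L^2(0,\infty;H)$, not a pointwise-in-time bound; it does not let you dominate $\frac{1}{\gamma}\|\B_2^*\R u(t)\|^2$ by $\|u(t)\|^2+\|\B_1^*\R u(t)\|^2$ at each $t$, and the integrated version of that domination is essentially equivalent to the estimate you are trying to prove. In addition, the two-sided bound $b_1|u|^2\leq(\R u,u)\leq b_2|u|^2$ that you borrow ``in the spirit of \eqref{i23}'' is not available for $\R$: Lemma \ref{l4.12} only gives $\R\geq 0$, and without the lower bound the Lyapunov function $(\R u,u)$ does not control $|u(t)|$, so your $L^\infty(0,\infty;H)$ bound does not follow. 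The substitution argument of the paper avoids both difficulties. If you wish to keep your route, the workable repair is the one the paper uses afterwards: first show that $\A_{\R}=\A-\B_1\B_1^*\R$ generates an exponentially stable analytic semigroup, and then treat $-\frac{1}{\gamma}\B_2\B_2^*\R u$ as an $L^2$-in-time forcing handled by the variation-of-constants formula, rather than trying to absorb it inside a single differential inequality.
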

\begin{proof}
By substituting the optimal control and the optimal disturbance in \eqref{w fixed1}-\eqref{w fixed2}, the theorem follows.
\end{proof}

\begin{thm}
For $u^0 \in H$ the following equation 
\begin{equation}
\frac{du}{dt} + Au = B_1 B_1^* \R u- \frac{1}{\gamma} B_2 B_2^* \R u , \ u(0)=u^{0},
\end{equation}
admits a unique solution in $L^\infty(0,\infty;H) \cap L^2(0,\infty;V).$
\end{thm}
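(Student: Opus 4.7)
The plan is to reduce this closed-loop problem directly to the coupled linear system solved in Theorem \ref{thm4.14}, by using the Riccati identity $r(t) = \R u(t)$ to eliminate the adjoint variable.

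For existence, I would apply Theorem \ref{thm4.14} to the given initial datum $u^0 \in H$ and extract the unique pair $(u,r)$ with $u \in L^\infty(0,\infty;H) \cap L^2(0,\infty;V)$, $r \in C([0,\infty);H) \cap L^2(0,\infty;V)$, satisfying the state equation, the adjoint equation, and the algebraic relation $r = \R u$. Plugging this relation into the state equation yields exactly
$$\frac{du}{dt} + \A u = \B_1\B_1^* \R u - \frac{1}{\gamma} \B_2\B_2^* \R u, \quad u(0) = u^0,$$
and the asserted regularity of $u$ is inherited verbatim from the estimate in Theorem \ref{thm4.14}. Since $\R \in \mathcal{L}(H)$ by Lemma \ref{l4.12}, the right-hand side is in $L^2(0,\infty;H)$, which is consistent with the regularity.

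For uniqueness, let $u_1, u_2 \in L^\infty(0,\infty;H) \cap L^2(0,\infty;V)$ be two solutions with the same initial datum. Setting $r_i := \R u_i$, I would verify that each pair $(u_i,r_i)$ solves the coupled system of Theorem \ref{thm4.14}: the state equation follows by reading it off from the closed-loop equation, the algebraic relation $r_i = \R u_i$ is built in, and the backward equation with terminal condition $r_i(\infty)=0$ follows by differentiating $r_i = \R u_i$ and invoking the Riccati-type identity for $\R$ derived in the $H^\infty$ analysis, together with the exponential decay of $u_i$ furnished by Lemma \ref{l4.12} and the stability estimates from Theorem \ref{thm3.2}. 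The uniqueness assertion in Theorem \ref{thm4.14} then yields $u_1=u_2$. A direct alternative is to set $v=u_1-u_2$ and test the linear equation
$$\frac{dv}{dt} + \A v = \Bigl(\B_1\B_1^* - \frac{1}{\gamma}\B_2\B_2^*\Bigr)\R v, \quad v(0)=0,$$
with $v$ in $H$; the sectoriality of $\A$ and the boundedness of $\R, \B_1, \B_2$ give a Gronwall inequality of the form $\frac{\d}{\d t}|v|^2 + c_1\|v\|^2 \leq c_2 |v|^2$, forcing $v \equiv 0$.

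The step I expect to be the main obstacle is justifying that, from a solution $u$ of the closed-loop equation alone, the candidate adjoint $r := \R u$ actually satisfies the backward equation of Theorem \ref{thm4.14} with the correct terminal condition at infinity — this is what makes the reduction to Theorem \ref{thm4.14} rigorous. The algebraic Riccati identity characterising $\R$ together with the decay $|u(t)| \to 0$ as $t \to \infty$ (which itself must be bootstrapped from the energy estimate in $L^2(0,\infty;V) \cap L^\infty(0,\infty;H)$) are the essential ingredients; once these are in place, both existence and uniqueness follow as direct corollaries of Theorem \ref{thm4.14}.
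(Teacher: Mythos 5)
Your proposal is correct and takes essentially the approach the paper intends: the paper in fact gives no proof of this theorem at all, treating it as an immediate consequence of Theorem \ref{thm4.14} via the substitution $r=\R u$ into the state equation, which is exactly your existence argument. Your extra care over uniqueness (verifying that $r:=\R u$ solves the backward equation, or alternatively the direct Gronwall estimate on $v=u_1-u_2$) fills in a step the paper leaves entirely implicit, and the Gronwall route is sound since the perturbation $(\B_1\B_1^*-\tfrac{1}{\gamma}\B_2\B_2^*)\R$ is bounded on $H$.
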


Next we have the following lemma where we will show that $\R$ satisfies the algebraic riccati equation.
\begin{lem}
The operator $\R\in \mathcal{L}(H)$ is the unique solution of the following algebric riccati equation
$$\R^*=\R,$$
$$\text{ for all } u^0 \in H \text{ we have } \R u^0 \in H \text{ and } |\R u^0|\leq C|u^0|,$$
\begin{eqnarray}\label{riccati R}
A^* \R + \R\A + \R \B_1 \B_1^* \R - \frac{1}{\gamma}\R \B_2 \B_2^* \R - I=0
\end{eqnarray}
\end{lem}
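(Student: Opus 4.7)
The symmetry $\R^{*}=\R$ and positivity of $\R$ are already established in Lemma \ref{l4.12}. The continuity estimate $|\R u^{0}|\leq C|u^{0}|$ is immediate from Theorem \ref{thm4.14}, since by definition $\R u^{0}=r(0)$ and that theorem gives $\|r\|_{C([0,\infty);H)}\leq C|u^{0}|$. Hence the only substantive point is the derivation of the algebraic Riccati equation itself, after which I would argue uniqueness.

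To derive the Riccati equation, I would start from the coupled optimality system of Theorem \ref{thm4.14}, namely
\begin{align*}
\frac{du}{dt}+\A u &= -B_{1}B_{1}^{*}r+\tfrac{1}{\gamma}B_{2}B_{2}^{*}r,\\
-\frac{dr}{dt}+\A^{*}r &= u,\qquad r(\infty)=0,\\
r(t) &= \R u(t),\qquad t\geq 0,
\end{align*}
which encodes the saddle-point feedback $\hat U=-B_{1}^{*}\R u$ and $\hat w=\tfrac{1}{\gamma}B_{2}^{*}\R u$. Differentiating the identity $r(t)=\R u(t)$ formally in $t$ (for $u^{0}$ in a suitable dense subspace of $H$, say the linear span of the eigenfunctions $\{\varphi_j\}$, where smoothness of $u$ and $r$ in $t$ is not an issue) and comparing with the adjoint equation yields
\[
-\R\frac{du}{dt}+\A^{*}\R u \;=\; u.
\]
Substituting the state equation for $du/dt$ and using $r=\R u$ gives
\[
\R\A u+\A^{*}\R u+\R B_{1}B_{1}^{*}\R u-\tfrac{1}{\gamma}\R B_{2}B_{2}^{*}\R u-u=0,
\]
valid for every $u^{0}$ in the dense subspace. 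Extending by continuity via the bound $|\R u|\leq C|u|$ and the closedness of $\A$ yields the operator identity
\[
\A^{*}\R+\R\A+\R B_{1}B_{1}^{*}\R-\tfrac{1}{\gamma}\R B_{2}B_{2}^{*}\R-I=0
\]
on the appropriate domain. The step I expect to be most delicate is precisely this extension: since the equation involves the unbounded operator $\A^{*}$ applied to $\R u$, one needs to exhibit enough regularity of $r$, for which I would appeal to the analyticity of the semigroup generated by $-\A_{\R_{0}}$ (Lemma \ref{lem3.4}) together with the smoothing estimate \eqref{is1}-\eqref{is3}.

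For uniqueness, suppose $\wi\R\in\mathcal{L}(H)$ is another self-adjoint, nonnegative, bounded solution of the same Riccati equation. The standard argument is to insert the feedback $\hat U=-B_{1}^{*}\wi\R u$, $\hat w=\tfrac{1}{\gamma}B_{2}^{*}\wi\R u$ into the dynamics and compute $\frac{d}{dt}(\wi\R u(t),u(t))$. Using the Riccati identity for $\wi\R$, the cross terms cancel and one obtains
\[
\frac{d}{dt}(\wi\R u(t),u(t))=-\|u(t)\|^{2}-\|\hat U(t)\|^{2}+\gamma\|\hat w(t)\|^{2}.
\]
Integrating from $0$ to $\infty$, provided $\wi\R$ is stabilizing in the sense that $(\wi\R u(t),u(t))\to 0$, yields $(\wi\R u^{0},u^{0})=2\mathcal{J}(u,\hat U,\hat w)$, which at the saddle point equals $2P(u^{0},\hat w)=(\R u^{0},u^{0})$. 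Polarization then gives $\wi\R=\R$. The subtle point is verifying that any such $\wi\R$ generates an exponentially stable closed loop, which is where one uses $\gamma>\gamma_{0}$ together with the detectability implicit in the cost term $\|u\|^{2}$; this is the main obstacle, and I would model it on Proposition 10 of \cite{BT 1} and the corresponding arguments in \cite{barbu} for the Navier--Stokes case.
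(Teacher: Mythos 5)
Your proposal is correct and follows essentially the same route as the paper for the Riccati equation itself: the paper's proof also reduces everything to Lemma \ref{l4.12} for symmetry and to Theorem \ref{thm4.14} for the identity \eqref{riccati R}, merely asserting that the equation ``follows from'' the coupled optimality system, which is exactly the differentiation of $r(t)=\R u(t)$ that you carry out explicitly. Two points of comparison are worth recording. First, for the bound $|\R u^0|\leq C|u^0|$ the paper argues through the value-function identity \eqref{e126}, deducing $\|u\|_{L^2}^2+\|U\|_{L^2}^2\leq C|u^0|^2$ and hence a bound on the quadratic form $(\R u^0,u^0)$; your route via $\R u^0=r(0)$ and the estimate $\|r\|_{C([0,\infty);H)}\leq C|u^0|$ of Theorem \ref{thm4.14} is more direct and gives the operator bound without having to pass from the quadratic form to the norm (which requires symmetry and positivity anyway). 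Second, the paper gives no argument at all for uniqueness, so your completion-of-squares sketch with a second stabilizing solution $\wi\R$ is genuinely additional content; you correctly identify that the real obstacle is showing that any bounded self-adjoint nonnegative solution of \eqref{riccati R} is automatically stabilizing, and that step is not addressed anywhere in the paper. One caution on signs: you write the saddle-point feedback as $\hat U=-\B_1^*\R u$, $\hat w=\tfrac{1}{\gamma}\B_2^*\R u$, whereas the paper's own characterization reads $\hat U=\B_1^* r_{u^0}$, $\hat w=-\tfrac{1}{\gamma}\B_2^* r_{u^0}$ with $r=\R u$; the two conventions lead to opposite signs on the quadratic terms of the Riccati equation, and it is your convention, not the paper's, that actually reproduces \eqref{riccati R} as stated (and that is consistent with the standard $H^\infty$ theory), so this discrepancy reflects a sign inconsistency internal to the paper rather than an error in your argument.
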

\begin{proof}
We have shown in Lemma \ref{l4.12} that $\R$ is symmetric.

Next we show the second condition.
From \eqref{e126} we get
\begin{align*}
\|u\|_{L^2([0,T],H)}^2 + \|U\|_{L^2([0,T],H)}^2 - \gamma \|w\|_{L^2([0,T],H)}^2 &\leq C|u^0|^2 \\ 
\|u\|_{L^2([0,T],H)}^2 + \|U\|_{L^2([0,T],H)}^2  &\leq \gamma \|w\|_{L^2([0,T],H)}^2 + C|u^0|^2 \\
 \|u\|_{L^2([0,T],H)}^2 + \|U\|_{L^2([0,T],H)}^2  &
 \leq C|u^0|^2.
\end{align*}
The last inequality follows from the fact that optimal disturbance is linearly dependent on $u^0$.
The riccati equation satisfied by $\R$ follows from Theorem \ref{thm4.14}.
\end{proof}

Now we consider the unbounded operator $(\A_{\R},D(\A_{\R}))$  defined by
$$D(\A_{\R})=\{u| \A u-\B_1\B_1^* \R u\in H\},$$
$$\A_{\R}u=\A u-\B_1\B_1^* \R u \ \text{ for all } u \in D(\A_{\R}).$$
\begin{prop}
The linear operator $(\A_{\R},D(\A_{\R}))$ generates an analytic and exponentially stable semigroup on $H$.
\end{prop}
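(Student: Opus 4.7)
The proposition has two parts: $-\A_\R$ generates an analytic $C_0$-semigroup on $H$, and this semigroup is exponentially stable. For analyticity, Lemma~\ref{lem3.4} already provides that $-\A$ is the generator of an analytic $C_0$-semigroup on $H$. The correction $\B_1\B_1^*\R$ is bounded on $H$, since $\B_1\in\mathcal{L}(H,H)$ by hypothesis and $\R\in\mathcal{L}(H)$ (with $|\R u|\leq C|u|$) by the preceding lemma. The standard bounded-perturbation theorem for analytic semigroup generators then shows that $-\A_\R=-\A+\B_1\B_1^*\R$ is the generator of an analytic $C_0$-semigroup on $H$ with unchanged domain $D(\A_\R)=D(\A)=D(A)$.

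For exponential stability I would combine the saddle-point structure of the $H^\infty$ game with Datko's theorem, rather than attempt a direct Lyapunov computation. Fix $u^0\in H$ and let $u$ solve $\tfrac{du}{dt}+\A_\R u=0$, $u(0)=u^0$; this is the closed-loop trajectory under the optimal feedback $U=\B_1^*\R u$ with zero disturbance $w=0$. Since $U=\B_1^*\R u$ is the saddle feedback for the $H^\infty$ game treated above, whose value is $\tfrac{1}{2}(\R u^0,u^0)$, the value under this feedback satisfies $\sup_{w\in L^2(0,\infty;H)}\J(u_w,\B_1^*\R u_w,w)=\tfrac{1}{2}(\R u^0,u^0)$, and specialising to $w=0$ yields
\[
\tfrac{1}{2}\int_0^\infty \|u(t)\|^2\,dt+\tfrac{1}{2}\int_0^\infty |\B_1^*\R u(t)|^2\,dt=\J(u,\B_1^*\R u,0)\leq \tfrac{1}{2}(\R u^0,u^0)\leq C|u^0|^2.
\]
In particular $u\in L^2(0,\infty;V)\hookrightarrow L^2(0,\infty;H)$, so every orbit of the semigroup generated by $-\A_\R$ is square-integrable in $H$. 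Datko's theorem then delivers the uniform exponential decay $|e^{-\A_\R t}u^0|\leq Me^{-\omega t}|u^0|$ for some $M\geq 1$ and $\omega>0$.

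The main obstacle is justifying rigorously the inequality $\J(u,\B_1^*\R u,0)\leq \tfrac{1}{2}(\R u^0,u^0)$. This should follow from a dynamic-programming / HJB characterisation of the value under the saddle feedback, analogous to the argument used for the finite-horizon operator $\R_0$ in Theorem~\ref{thm3.2}. As a fall-back, I would attempt a direct Lyapunov argument with $V(u)=(\R u,u)$: differentiating along the closed loop gives $\tfrac{d}{dt}V(u)=-2\,\mathrm{Re}(\R u,\A u)+2|\B_1^*\R u|^2$, and the algebraic Riccati equation \eqref{riccati R} rewrites $2\,\mathrm{Re}(\R u,\A u)$ in terms of $|\B_1^*\R u|^2$, $|\B_2^*\R u|^2$ and $|u|^2$. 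The technical challenge in that route is controlling the opposite-signed disturbance contribution $\tfrac{1}{\gamma}|\B_2^*\R u|^2$ so as to extract a dissipation inequality $\tfrac{d}{dt}V+2\omega V\leq 0$, from which exponential decay follows by Gronwall.
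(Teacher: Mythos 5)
Your proposal is correct, and your ``fall-back'' Lyapunov argument is in fact the paper's own proof: the paper differentiates $(\R u(t),u(t))$ along trajectories of the controlled linear system, substitutes the algebraic Riccati equation \eqref{riccati R}, completes squares in $U$ and $w$, and then sets $U=\B_1^*\R u$, $w=0$ and integrates up to $T\to\infty$ to obtain $\int_0^\infty|u(t)|^2\,dt\leq C|u^0|^2$. Your primary route (the saddle-point value bound $\J(u,\B_1^*\R u,0)\leq\tfrac12(\R u^0,u^0)$ followed by Datko's theorem) is the same identity packaged at the level of the value function; to justify that inequality rigorously you would end up writing down exactly the completion-of-squares identity, so the two routes coincide in substance. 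Two remarks. First, the ``opposite-signed'' disturbance term you worry about is not actually an obstacle: in the correctly signed completion of squares, taking $w=0$ deviates from the worst-case feedback disturbance $w^*=\tfrac1\gamma\B_2^*\R u$, and the resulting square $\gamma|w-w^*|^2=\tfrac1\gamma|\B_2^*\R u|^2$ enters the energy identity with a favourable sign, giving $(\R u(T),u(T))+\int_0^T\bigl(|u|^2+|\B_1^*\R u|^2+\tfrac1\gamma|\B_2^*\R u|^2\bigr)\,dt=(\R u^0,u^0)$, so no additional control of that term is needed (the paper's displayed computation carries a sign slip at this point). Second, you are more complete than the paper on two steps it leaves implicit: the analyticity of the perturbed semigroup, which you correctly obtain by bounded perturbation of the analytic generator $-\A$ by $\B_1\B_1^*\R\in\mathcal{L}(H)$, and the passage from the $L^2(0,\infty;H)$ orbit bound to uniform exponential decay, which indeed requires Datko's theorem (or an equivalent argument) that the paper never invokes explicitly.
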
\label{p4.18}
\begin{proof}
Let us take the derivative of $(\R u(t),u(t))$ and integrating from $0$ to $T$, we have 
\begin{align*}
(\R u(T),u(T))-(\R u^0,u^0)&= \int_0^T (\R \A u(t),u(t)) \d t + \int_0^T (u(t),\A^* \R u(t)) \d t \\
&\qquad + 2\int_0^T (\R \B_1 U(t),u(t))+ 2\int_0^T (\R \B_2 w(t) ,u(t)) \d t.
\end{align*}
Using the fact that $\R$ satisfies the algebric riccati equation \eqref{riccati R} we get
\begin{align*}
(\R u(T),u(T))-(\R u^0,u^0)&= - \int_0^T (\R \B_1 \B_1^* \R u(t),u(t)) \d t + \frac{1}{\gamma}\int_0^T(\R \B_2 \B_2^* \R u(t),u(t)) \d t \\
&\qquad + \int_0^T |u(t)|^2 \d t+ 2\int_0^T (\R \B_1 U(t),u(t)) \\
&\qquad+ 2\int_0^T (\R \B_2 w(t),u(t)) \d t.
\end{align*}
We obatain
\begin{align*}
(\R u(T),u(T))-(\R u^0,u^0)&=  \int_0^T |U(t)-\B_1^* \R u(t)|^2 \d t +\gamma \int_0^T |w(t)+\frac{1}{\gamma} \B_2^* \R u(t)|^2 \d t \\
&\qquad- \int_0^T |u(t)|^2 \d t-\int_0^T |U(t)|^2 \d t + \gamma \int_0^T |w(t)|^2 \d t .
\end{align*}
Let us choose $U=\B_1^* \R u$ and $w=0$, therefore we can see that $u$ is the solution of 
\begin{eqnarray}
\frac{du}{dt} = Au - \B_1 \B_1^* \R u \ \text{ on } (0,T)\times \Omega, \ u(0)=u^0,
\end{eqnarray}
and we have 
\begin{align*}
(\R u(T),u(T))+ \int_0^T |\B_1^* \R u(t)|^2 \d t -\frac{1}{\gamma}\int_0^T |\B_2^* \R u|^2 \d t + \int_0^T |u(t)|^2 \d t =(\R u^0,u^0).
\end{align*}
Now taking the limit $T$ goes to $\infty$ we finally obtain,
\begin{align*}
\int_0^\infty |\B_1^* \R u(t)|^2 \d t-\frac{1}{\gamma}\int_0^\infty |\B_2^* u|^2 \d t + \int_0^\infty |u(t)|^2 \d t \leq C|u^0|^2.
\end{align*}
This completes the proof.
\end{proof}

\begin{thm}\label{thm4.18}
Let $u^0 \in H$. Then the following system 
\begin{align}\label{e64}
&\frac{du}{dt} +\mathcal{A}_{\R}u =- \frac{1}{\gamma}B_2 B_2^* \R u , \ u(0)=u^{0} \\ 
&-\frac{dr}{dt} +\mathcal{A}^*r =u , \ r(\infty)=0 \label{e84}
\end{align}
has a unique solution  
\begin{eqnarray}
(u,r) \in \left(L^\infty(0,\infty;H) \cap L^2(0,\infty;V) \right) \times \left(C([0,\infty],H) \cap L^2(0,\infty;V) \right).
\end{eqnarray}
It satisfies 
\begin{align}
\|u\|_{L^2(0,\infty;V)} + \|r\|_{L^2(0,\infty;V)} \leq C|u^0|.
\end{align}
\end{thm}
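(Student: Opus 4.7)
The system in the theorem is the closed-loop of the minimax problem, obtained by substituting the optimal control $\hat U = \B_1^*\R u$ and optimal disturbance $\hat w = -\frac{1}{\gamma}\B_2^*\R u$ into the linearised equation, while $r = \R u$ plays the role of the associated adjoint state. The plan is therefore to solve the first equation via semigroup methods, then recover $r$ by defining $r:=\R u$ and verifying the adjoint equation using the algebraic Riccati identity \eqref{riccati R}; uniqueness will be immediate from linearity.

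By Proposition \ref{p4.18}, $-\A_\R$ generates an analytic, exponentially stable $C_0$-semigroup on $H$. Since $\frac{1}{\gamma}\B_2\B_2^*\R\in\mathcal{L}(H)$ is a bounded perturbation, the operator $-\A_\R - \frac{1}{\gamma}\B_2\B_2^*\R$ again generates an analytic $C_0$-semigroup on $H$, so a unique mild solution $u\in C([0,\infty);H)$ to the first equation exists, with $u(t)\in V$ for $t>0$ by analyticity. For the global bound I would invoke the minimax optimality: the cost at the optimum equals $\mathcal{J}(u,\hat U,\hat w) = P(u^0,\hat w) = \tfrac{1}{2}(\R u^0,u^0)\leq C|u^0|^2$, which combined with the duality identity of Lemma \ref{l4.7} evaluated at $(\hat w,u)$ produces
\begin{equation*}
\|u\|_{L^\infty(0,\infty;H)} + \|u\|_{L^2(0,\infty;V)} \leq C|u^0|.
\end{equation*}

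Next, set $r(t):=\R u(t)$. Differentiating $r$ along the trajectory and substituting \eqref{riccati R} together with the equation for $u$, the $\R\B_1\B_1^*\R u$ and $\frac{1}{\gamma}\R\B_2\B_2^*\R u$ terms cancel exactly, leaving $-r' + \A^* r = u$. The terminal condition $r(\infty)=0$ follows from $|u(t)|\to 0$ (a consequence of $u\in L^2(0,\infty;H)$ and $u_t\in L^2_{\rm loc}(0,\infty;V')$) combined with $\R\in\mathcal{L}(H)$. The $L^2(0,\infty;V)$ bound for $r$ is inherited from that of $u$ via the mapping property $\R:V\to V$, which is the infinite-horizon analogue of estimate \eqref{i12} and can be established by a closed-graph argument mirroring the proof of Theorem \ref{thm3.2}, using that $r_{u^0} = \R_0 u + p$ with both summands in $L^2(0,\infty;V)$ by Theorem \ref{t4.5}.

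The principal obstacle is the $L^2(V)$-coercivity for $u$: a direct computation of $\frac{d}{dt}(\R u,u)$ produces indefinite contributions $|\B_1^*\R u|^2 - \tfrac{1}{\gamma}|\B_2^*\R u|^2$, so one cannot rely on a pointwise Lyapunov inequality and must instead extract the bound from the algebraic minimax identity $(\R u^0,u^0) = 2P(u^0,\hat w)$ read together with Lemma \ref{l4.7}. Uniqueness in the stated regularity class then follows by applying the same estimate to the difference of two solutions, which satisfies the homogeneous version of the first equation with zero initial datum.
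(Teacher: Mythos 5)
Your proposal is correct in its overall architecture but reaches the key estimate by a genuinely different route from the paper. The paper's proof is purely a semigroup argument: it writes $u$ by the variation-of-constants formula for the exponentially stable analytic semigroup generated by $-\A_{\R}$ (Proposition \ref{p4.18}), treats $-\frac{1}{\gamma}\B_2\B_2^*\R u$ as an $L^2$ forcing, and invokes the maximal-regularity estimate of Proposition 3.1 of \cite{ben4} to get $\|u\|_{C([0,\infty),H)\cap L^2(0,\infty;V)}\leq C|u^0|$; the bound on $r$ is then obtained by applying the same estimate to the backward equation \eqref{e84} with source $u$. You instead extract the $L^2$ bound on $u$ from the minimax structure — the cost identity $(\R u^0,u^0)=2P(u^0,\hat w)$ of \eqref{e126} read together with Lemma \ref{l4.7} at $w=\hat w$, where the term $\int_0^\infty(\B_2\hat w,r)=-\frac{1}{\gamma}\int_0^\infty\|\B_2^*r\|^2\leq 0$ has the right sign — and you verify the second equation by setting $r=\R u$ and cancelling via the Riccati identity \eqref{riccati R}. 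This buys you independence from the external maximal-regularity citation and makes transparent why a pointwise Lyapunov computation fails (the indefinite $|\B_1^*\R u|^2-\frac{1}{\gamma}|\B_2^*\R u|^2$ contribution), a point the paper never addresses; the cost is that you must implicitly identify the closed-loop trajectory with the optimal trajectory of Theorem \ref{thm4.14}, which is fine but should be said. One step needs repair: the $L^2(0,\infty;V)$ bound for $r$ via a mapping property $\R:V\to V$ is not available in the paper (estimate \eqref{i12} only gives $\R_0:V\to H$), and in your fallback decomposition $r=\R_0 u+p$ the summand $\R_0 u$ is only known to lie in $L^2(0,\infty;H)$, not $L^2(0,\infty;V)$, even though $p\in L^2(0,\infty;V)$ by Theorem \ref{t4.5}. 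The clean fix is the paper's: once you have verified that $r$ solves $-r'+\A^*r=u$, $r(\infty)=0$, with $u\in L^2(0,\infty;H)$, the bound $\|r\|_{C([0,\infty),H)\cap L^2(0,\infty;V)}\leq C\|u\|_{L^2(0,\infty;H)}\leq C|u^0|$ follows directly from the exponential stability of the adjoint semigroup, with no reference to mapping properties of $\R$.
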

\begin{proof}
Observe that 
\begin{align}
\|- \frac{1}{\gamma}B_2 B_2^* \R u\|_{L^2(0,\infty;H))} \leq C_{\gamma}\|u\|_{L^2(0,\infty;V))}.
\end{align}
We know that from Proposition \eqref{p4.18} that $\A_{\R}$ generates a exponentially stable semigroup. It yields

\begin{equation}
u(t)= e^{-\A_{\R }t} u^0 -\frac{1}{\gamma} \int_0^\infty e^{-\A_{\R}(t-s)} B_2 B_2^* \R u(s) \d s .
\end{equation}
Therefore  from the first equation of \eqref{e64}, using Proposition 3.1 of \cite{ben4} we get
\begin{align}\label{e65}
\|u\|_{C([0,\infty],H) \cap L^2(0,\infty;V)} &\leq C_{\gamma} \| u\|_{L^2(0,\infty;V)} \leq C  |u^0|. 
\end{align}
Similarly we get from \eqref{e84}
\begin{align}\label{e66}
 \|r\|_{C([0,\infty],H) \cap L^2(0,\infty;V)} \leq C  |u^0| 
\end{align}
Adding \eqref{e65} and \eqref{e66} we get
\begin{align*}
\|u\|_{L^2(0,\infty;V)} + \|r\|_{L^2(0,\infty;V)} \leq C |u^0|.
\end{align*}
\end{proof}
 
\subsection{Robust stabilization of the linearized  system.}
Consider the following system
\begin{eqnarray}\label{e35}
\frac{du}{dt} +\A_{\R}u = \B_2w, \ u(0)=u^0.
\end{eqnarray}
\begin{thm}
If $w \in L^2(0,\infty;H)$ and $u^0 \in H$, then the unique solution of \eqref{e35} satisfies the following inequality,
\begin{align}
\int_0^\infty |u(t)|^2 \d t + \int_0^\infty |\B_1^* \R u(t)|^2 \d t \leq C|u^0|^2 + \gamma |w|^2.
\end{align}
\end{thm}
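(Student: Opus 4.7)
The plan is to use the Riccati operator $\R$ from \eqref{riccati R} as a Lyapunov/storage function for the closed-loop flow and to derive an $H^{\infty}$-type dissipation inequality via a square completion against the disturbance $w$.

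First, I differentiate $(\R u(t),u(t))$ along a solution of $u'+\A_{\R}u=\B_2w$. Since $\R$ is self-adjoint and $\A_{\R}=\A-\B_1\B_1^{*}\R$, I obtain
\[
\tfrac{d}{dt}(\R u,u)=2(\R u',u)=-\bigl((\R\A_{\R}+\A_{\R}^{*}\R)u,u\bigr)+2(\B_2^{*}\R u,w),
\]
and the first term on the right equals $-\bigl((\R\A+\A^{*}\R)u,u\bigr)+2|\B_1^{*}\R u|^{2}$ after expanding $\A_{\R}$.

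Next I substitute the algebraic Riccati equation \eqref{riccati R} to replace $\R\A+\A^{*}\R$. After the two $|\B_1^{*}\R u|^{2}$ contributions combine, the identity rearranges to
\[
\tfrac{d}{dt}(\R u,u)+|u|^{2}+|\B_1^{*}\R u|^{2}+\tfrac{1}{\gamma}|\B_2^{*}\R u|^{2}=2(\B_2^{*}\R u,w).
\]
Young's inequality $2(\B_2^{*}\R u,w)\le \tfrac{1}{\gamma}|\B_2^{*}\R u|^{2}+\gamma|w|^{2}$ (which is the quadratic completion identifying the worst-case disturbance $\hat w=-\tfrac{1}{\gamma}\B_2^{*}\R u$) exactly cancels the $\tfrac{1}{\gamma}|\B_2^{*}\R u|^{2}$ term on the left, leaving the dissipation inequality
\[
\tfrac{d}{dt}(\R u,u)+|u|^{2}+|\B_1^{*}\R u|^{2}\le \gamma|w|^{2}.
\]

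Third, I integrate from $0$ to $T$. Using positivity of $\R$ (so $(\R u(T),u(T))\ge 0$) and the bound $(\R u^{0},u^{0})\le |\R u^{0}|\,|u^{0}|\le C|u^{0}|^{2}$ coming from the estimate $|\R u|\le C|u|$ established in the Lemma preceding Theorem \ref{thm4.18}, I get
\[
\int_{0}^{T}|u|^{2}\,\d t+\int_{0}^{T}|\B_1^{*}\R u|^{2}\,\d t\le C|u^{0}|^{2}+\gamma\int_{0}^{T}|w|^{2}\,\d t.
\]
The right-hand side is uniform in $T$, so passing to $T\to\infty$ yields the claimed estimate; that the solution indeed lives in $L^{2}(0,\infty;H)\cap C([0,\infty);H)$ is guaranteed by the exponential stability of $\A_{\R}$ proved in the proposition preceding Theorem \ref{thm4.18} together with $w\in L^{2}(0,\infty;H)$.

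The main obstacle is the algebraic bookkeeping in the second step: one must carefully track signs when substituting the Riccati equation so that, after collecting the two $|\B_1^{*}\R u|^{2}$ contributions, the coefficient of this term ends up positive on the left of the dissipation inequality, and the coefficient of $\tfrac{1}{\gamma}|\B_2^{*}\R u|^{2}$ matches exactly what the Young's-inequality completion removes. This cancellation is the hallmark of the $H^{\infty}$ Riccati equation and reflects the saddle-point structure of the underlying min--max problem.
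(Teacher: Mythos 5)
Your proposal is correct and follows essentially the same route as the paper: differentiate $(\R u,u)$ along the closed-loop flow, substitute the algebraic Riccati equation \eqref{riccati R}, and complete the square in $w$ (the paper writes the nonnegative term $\gamma\int_0^\infty |w-\tfrac{1}{\gamma}\B_2^*\R u|^2\d t$ explicitly and drops it, which is exactly your Young's-inequality step), then integrate and use positivity and boundedness of $\R$. Your truncation at finite $T$ followed by $T\to\infty$ is a slightly more careful rendering of the paper's direct integration over $(0,\infty)$, but the argument is the same.
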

\begin{proof}
Let us take the derivative of $(\R u(t),u(t))$ and integrating from $0$ to $\infty$, we have 
\begin{align*}
-(\R u^0,u^0)&= \int_0^\infty (\A \R u(t),u(t)) \d t + \int_0^\infty (u(t),\R \A^* u(t)) \d t - 2\int_0^\infty |\B_1^* \R u(t)|^2 \d t \\
&\qquad + 2\int_0^\infty (\B_2 w(t) ,u(t)) \d t.
\end{align*}
Using the algebric riccati equation of $\R$, we obtain 
\begin{align*}
-(\R u^0,u^0)&= - \int_0^\infty |\B_1^* \R u(t)|^2 \d t - \frac{1}{\gamma}\int_0^\infty |\B_2^* u|^2 \d t - \int_0^\infty |u(t)|^2 \d t\\
&\qquad + 2\int_0^\infty (w(t) ,\B_2^* \R u(t)) \d t.
\end{align*}
Further we obtain,
\begin{align*}
(\R u^0,u^0)&= \int_0^\infty |\B_1^* \R u(t)|^2 \d t + \gamma\int_0^\infty |w - \frac{1}{\gamma}\B_2^* \R u(t)|^2 \d t + \int_0^\infty |u(t)|^2 \d t\\
&\qquad - \gamma \int_0^\infty |w(t)|^2 \d t .
\end{align*}
Finally we can deduce that,
 \begin{align*}
\int_0^\infty |u(t)|^2 \d t + \int_0^\infty |\B_1^* \R u(t)|^2 \d t \leq C|u^0|^2 + \gamma |w|^2.
\end{align*}
\end{proof}

\subsection{Robust stabilization of the nonlinear system.}
Next our aim is to show that the optimal control $\hat{U}$ will stabilize the nonlinear system \eqref{nonlinear}. We will first prove the following lemma.
\begin{lem}\label{lem4.20}
Let us take $g \in L^2(0,\infty;H)$. The system 
\begin{align}\label{e85}
&\frac{du}{dt} +\mathcal{A}_{\R}u =B_2 w + g, \ u(0)=u_{0} \\ 
&-\frac{dr}{dt} +\mathcal{A}^*r =u , \ r(\infty)=0 \no
\end{align}
has a unique solution  
\begin{eqnarray*}
(u,r) \in \left(L^\infty(0,\infty;H) \cap L^2(0,\infty;V) \right) \times \left(C([0,\infty],H) \cap L^2(0,\infty;V) \right),
\end{eqnarray*}
for all $u^0 \in H$ and $w \in L^2(0,\infty;H).$ Moreover it satisfies 
\begin{align*}
\|u\|_{L^\infty(0,\infty;H) \cap L^2(0,\infty;V)} + \|r\|_{L^\infty(0,\infty;H) \cap L^2(0,\infty;V)} \leq C(|u^0|+ \|w\|_{L^2(0,\infty;H)} + \|g\|_{L^2(0,\infty;H)}).
\end{align*}
\end{lem}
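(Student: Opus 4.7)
The crucial observation is that the first equation in \eqref{e85} is decoupled from $r$, since $\mathcal{A}_{\R} = \mathcal{A} - \B_1\B_1^*\R$ involves only $u$. The natural plan is therefore a two-step argument: first solve for $u$ in isolation, then feed the resulting $u$ into the backward equation for $r$.

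For the first step, Proposition \ref{p4.18} tells us that $\mathcal{A}_\R$ generates an analytic and exponentially stable semigroup on $H$. Since $B_2 \in \mathcal{L}(H,H)$, the source $B_2 w + g$ lies in $L^2(0,\infty;H)$ with $\|B_2 w + g\|_{L^2(H)} \leq C(\|w\|_{L^2(H)} + \|g\|_{L^2(H)})$. Maximal $L^2$-regularity for analytic semigroups, invoked in exactly the same form as Proposition 3.1 of \cite{ben4} used in the proof of Theorem \ref{thm4.18}, then produces a unique $u \in C([0,\infty);H) \cap L^2(0,\infty;V)$ satisfying
\begin{equation*}
\|u\|_{L^\infty(0,\infty;H)\cap L^2(0,\infty;V)} \leq C\bigl(|u^0| + \|w\|_{L^2(0,\infty;H)} + \|g\|_{L^2(0,\infty;H)}\bigr).
\end{equation*}

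For the second step, with this $u \in L^2(0,\infty;H)$ now fixed, the $r$-equation is a linear inhomogeneous backward Cauchy problem with terminal condition at infinity. Following the template of Theorem \ref{thm4.18}, one rewrites it as a forward problem by time-reversal $\tilde r(\tau) = r(T-\tau)$ on $[0,T]$, obtaining $d\tilde r/d\tau + \mathcal{A}^* \tilde r = u(T-\tau)$ with $\tilde r(0) = r(T)$. For each finite $T$ this is solvable by standard semigroup theory since $-\mathcal{A}^*$ generates an analytic $C_0$-semigroup by duality. Passing to the limit $T\to\infty$, the terminal condition $r(\infty)=0$ pins down a unique limit satisfying $\|r\|_{C([0,\infty];H)\cap L^2(0,\infty;V)} \leq C\|u\|_{L^2(0,\infty;H)}$. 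Adding the two estimates yields the stated bound, and uniqueness follows from linearity by the usual energy argument applied to the difference of two solutions.

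The main obstacle is the backward equation, because $\mathcal{A}^*$ is not itself exponentially stable and Proposition 3.1 of \cite{ben4} does not apply verbatim. The remedy is to use the spectral decomposition $H = H_N \oplus H_N^-$ already exploited in Section 3: on the stable subspace $H_N^-$ the representation $r(t) = \int_t^\infty e^{-\mathcal{A}^*(s-t)} u(s)\,ds$ converges by the exponential decay of $e^{-\mathcal{A}_N^{-*}\tau}$ combined with the Cauchy--Schwarz estimate on $u \in L^2$; on the finite-dimensional unstable part $H_N$ the backward ODE is solved componentwise and the requirement $r(\infty)=0$, coupled with $u \in L^2$, uniquely determines the finite-dimensional piece through an explicit variation-of-parameters formula. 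The sum reconstructs $r$ with the required regularity and estimate.
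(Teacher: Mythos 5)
Your proposal is correct and follows essentially the same route as the paper: the paper's own proof of this lemma is a one-line deferral to Theorem \ref{thm4.18}, whose argument is exactly your decoupled scheme --- solve the forward equation using the exponentially stable analytic semigroup generated by $\A_{\R}$ together with Proposition 3.1 of \cite{ben4}, then feed $u$ into the backward equation for $r$. Your additional discussion of the backward equation via the spectral splitting $H=H_N\oplus H_N^-$ supplies a step the paper leaves implicit (it simply asserts the estimate for $r$), and is a reasonable way to make that step rigorous.
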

\begin{proof}
It follows from the Theorem \ref{thm4.18}. 
\end{proof}

\begin{thm}
There exist $\kappa_0>0$ and a nondecreasing function $\pi:\mathbb{R}^+ \rightarrow \mathbb{R}^+$ such that if $0<\kappa<\kappa_0$ and $|y_0| + \|w\|_{L^2(0,\infty;H)} \leq \pi(\kappa)$, then the nonlinear system 
\begin{equation}\label{nonlinear1}
\frac{du}{dt} +\mathcal{A}_{\R}u+B(u) =B_2 w, \ u(0)=u_{0}
\end{equation}
 has a unique solution
\begin{eqnarray}
u \in \left(C([0,\infty],H) \cap L^2(0,\infty;V)   \right).
\end{eqnarray}
and the solution $u \in \Sigma_\mu= \{u \in L^\infty(0,\infty;H) \cap L^2(0,\infty;V); \|u\|_{L^\infty(0,\infty;H)} \leq \kappa, \|u\|_{L^2(0,\infty;V)}  \leq \kappa \}, \ \forall t \geq 0.$
\end{thm}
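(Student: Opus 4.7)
The plan is to solve \eqref{nonlinear1} by a Banach fixed-point argument on the closed ball
\[
\Sigma_\kappa = \{v \in L^\infty(0,\infty;H) \cap L^2(0,\infty;V) : \|v\|_{L^\infty(H)} \le \kappa,\ \|v\|_{L^2(V)} \le \kappa\},
\]
exploiting the exponential stability of the semigroup generated by $-\A_{\R}$ established just before Theorem \ref{thm4.18}. Given $v \in \Sigma_\kappa$, let $\Phi(v) = u$ be the unique solution of the linear equation
\[
\frac{du}{dt} + \A_{\R} u = B_2 w - B(v,v), \qquad u(0) = u^0,
\]
which is well-defined in $C([0,\infty);H) \cap L^2(0,\infty;V)$ by Lemma \ref{lem4.20} applied with $g = -B(v,v)$. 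The forcing lies in $L^2(0,\infty;H)$ because the bilinear bound 1(a) of Theorem \ref{prop b} gives $|B(v,v)| \le C_1 |v|\|v\|$, whence
\[
\|B(v,v)\|_{L^2(0,\infty;H)}^2 \le C_1^2 \|v\|_{L^\infty(H)}^2 \|v\|_{L^2(V)}^2 \le C_1^2 \kappa^4 .
\]
Any fixed point of $\Phi$ in $\Sigma_\kappa$ will be a solution of \eqref{nonlinear1} in the desired class.

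For the self-map property, Lemma \ref{lem4.20} yields
\[
\|\Phi(v)\|_{L^\infty(H) \cap L^2(V)} \le M\bigl(|u^0| + \|w\|_{L^2(0,\infty;H)} + C_1 \kappa^2\bigr)
\]
for a constant $M$ determined only by the semigroup. Choosing $\pi(\kappa) = \kappa/(4M)$ and then fixing $\kappa_0$ so that $M C_1 \kappa_0 \le 1/4$, one gets the bound $\kappa/2 + \kappa/4 < \kappa$, so $\Phi(\Sigma_\kappa) \subset \Sigma_\kappa$ whenever $|u^0| + \|w\|_{L^2(H)} \le \pi(\kappa)$ and $\kappa \le \kappa_0$. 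For the contraction, the difference $\Phi(v_1)-\Phi(v_2)$ solves the same linear equation with zero initial data, zero disturbance, and forcing $-(B(v_1,v_1)-B(v_2,v_2))$. Using the decomposition
\[
B(v_1,v_1) - B(v_2,v_2) = B(v_1 - v_2, v_1) + B(v_2, v_1 - v_2)
\]
together with the bilinear estimates of Theorem \ref{prop b} gives
\[
\|B(v_1,v_1) - B(v_2,v_2)\|_{L^2(0,\infty;H)} \le C\kappa\, \|v_1 - v_2\|_{L^\infty(H) \cap L^2(V)},
\]
and another application of Lemma \ref{lem4.20} produces $\|\Phi(v_1) - \Phi(v_2)\| \le MC\kappa\, \|v_1 - v_2\|$. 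Shrinking $\kappa_0$ further so that $MC\kappa_0 < 1$ makes $\Phi$ a strict contraction on $\Sigma_\kappa$, and the Banach fixed-point theorem delivers the unique $u \in \Sigma_\kappa$ solving \eqref{nonlinear1}.

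The main obstacle I anticipate is keeping all estimates uniform on the infinite time interval: both the invariance and the contraction step depend on controlling $B(v,v)$ in $L^2(0,\infty;H)$ rather than on $[0,T]$, which is only available because the semigroup of $-\A_{\R}$ is exponentially stable and the linear map from right-hand side to solution is bounded $L^2(0,\infty;H) \to L^\infty(0,\infty;H) \cap L^2(0,\infty;V)$ with a constant independent of time. Once that is in hand, the remaining content is the familiar two-parameter balance between the data size $\pi(\kappa)$, which controls the linear response, and $\kappa_0$, which controls the quadratic self-coupling through $B$. Uniqueness in $\Sigma_\kappa$ is automatic from the contraction; uniqueness in the larger class $C([0,\infty);H) \cap L^2(0,\infty;V)$ can be recovered by a standard Gronwall argument on the difference of two solutions, using $\mathrm{Re}\,(B(u,v),v) = 0$ together with the smallness of $\|u\|$.
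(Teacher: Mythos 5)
Your proposal is correct and follows essentially the same route as the paper: a Banach fixed-point argument on $\Sigma_\kappa$, feeding $g=-B(v,v)$ into the linear solvability estimate of Lemma \ref{lem4.20} (the paper writes this as $\Lambda=\Upsilon\circ\Gamma$ with $\Gamma(u)=-B(u)$), with the same balance $\pi(\kappa)\sim\kappa/C$ against $\kappa_0\sim 1/(CC_1)$ for the self-map and contraction steps. Your contraction estimate with Lipschitz constant $O(\kappa)$ is in fact the cleaner version of the paper's inequality \eqref{e47} (which contains a typographical slip), and your closing remark on uniqueness beyond $\Sigma_\kappa$ is a small addition the paper does not spell out.
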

\begin{proof}
Let $u^0 \in H$ be arbitary. Let us denote $\Upsilon: L^2(0,\infty;H)  \rightarrow L^2(0,\infty;H) $ and defined by
$$\Upsilon(g)=u,$$
where $u$ is the solution of the system \eqref{e85}. From the Theorem \ref{lem4.20} we know that $\Upsilon$ is Lipschitz function from $ L^2(0,\infty;H) $ to $ \left(C([0,\infty],H) \cap L^2(0,\infty;V) \right) $.
We can write the solution of the nonlinear system \eqref{nonlinear} as
$$u=\Upsilon(-\B(u)).$$
Let us set $$\Gamma(u)=-\B(u), \qquad \Lambda=\Upsilon\circ\Gamma.$$
Now our aim is to show that $\Lambda$ maps $\Sigma_\mu$ to itself and it is a contraction map.
We have 
\begin{align}
|\B(u)| &\leq C_1|u| \|u\| , \quad \forall u \in V. \no \\
|\B(u) - \B(v)| &\leq C_1 \left(|u|\|u-v\| + \|u-v\||v|\right), \quad \forall u,v \in V. \label{e71} 
\end{align}
Integrating \eqref{e71} over $0$ to $\infty$ we get,
\begin{align}
|\B(u) - \B(v)|_{L^2(0,\infty;H)} &\leq C_1 \left(\|u\|_{L^\infty(0,\infty;H)} \|u\|_{L^2(0,\infty;V)} + \|v\|_{L^\infty(0,\infty;H)} \|v\|_{L^2(0,\infty;V)}\right) \no \\
&\qquad \|u-v\|_{L^\infty(0,\infty;H)} \|u-v\|_{L^2(0,\infty;V)}. \label{e73}
\end{align}
Therefore from \eqref{e73} we get
\begin{align}\label{e47}
\|\Gamma(u)-\Gamma(v)\|_{L^2(0,\infty;H) \times L^2(0,\infty;H)} &\leq C_1 \left(\|u\|_{L^\infty(0,\infty;H)} \|u\|_{L^2(0,\infty;V)} + \|v\|_{L^\infty(0,\infty;H)} \|v\|_{L^2(0,\infty;V)}\right) \no \\
&\qquad \|u-v\|_{L^\infty(0,\infty;H)} \|u-v\|_{L^2(0,\infty;V)},
\end{align}
 for all $u,v \in L^\infty(0,\infty;H) \cap L^2(0,\infty;V).$ 
 Moreover we have
\begin{align}\label{e48}
\|\Gamma(u)\|_{L^2(0,\infty;H)} \leq C_1  \|u\|_{L^\infty(0,\infty;H)} \|u\|_{L^2(0,\infty;V)},
\end{align}
 for all $u\in L^\infty(0,\infty;H) \cap L^2(0,\infty;V).$ 
Now let us take $X=(L^\infty(0,\infty;H) \cap L^2(0,\infty;V) $, it implies 
$$\Sigma_\kappa = \{u \in X; \|u\|_{L^\infty(0,\infty;H)}\leq \kappa, \|u\|_{L^2(0,\infty;V)} \leq \kappa\}.$$
Let us choose $\pi(\kappa) = \frac{\kappa}{2C}$ and $\kappa<\kappa_0$ where $\kappa_0=\frac{1}{2CC_1}$. Using the Lemma \ref{lem4.20} and \eqref{e48} we derive 
\begin{align*}
\|\Lambda(u)\|_{L^2(0,\infty;H)}  &\leq  C(|u^0|+ \|w\|_{L^2(0,\infty;H)} + \|\Gamma(u)\|_{L^2(0,\infty;H)}) \\
& \leq C(|u^0|+ \|w\|_{L^2(0,\infty;H)} +C_1 \|u\|_{L^\infty(0,\infty;H)} \|u\|_{L^2(0,\infty;V)}) \\
&\leq C\pi(\kappa) +CC_1 \|u\|_{L^\infty(0,\infty;H) \cap L^2(0,\infty;V)} \\
& \leq C\pi(\kappa) +CC_1 \kappa^2 \leq \kappa.
\end{align*}

So we proved that $\Lambda$ maps $\Sigma_\kappa$ to itself. Now we are left to show that $\Lambda$ is a contraction map.

From the Lemma \ref{lem4.20} and \eqref{e47} we get
\begin{align}\label{e72}
\|\Lambda(u)-\Lambda(v)\|_{L^2(0,\infty;H)} &\leq C\|\Gamma(u)-\Gamma(v)\|_{L^2(0,\infty;H)} \no \\
& \leq CC_1 \left(\|u\|_{L^\infty(0,\infty;H)} \|u\|_{L^2(0,\infty;V)} + \|v\|_{L^\infty(0,\infty;H)} \|v\|_{L^2(0,\infty;V)}\right) \no \\
&\qquad \|u-v\|_{L^\infty(0,\infty;H)} \|u-v\|_{L^2(0,\infty;V)} \\
&\leq 2CC_1 \kappa^2 \|u-v\|_{L^\infty(0,\infty;H) \cap L^2(0,\infty;V)}. 
\end{align}
The operator $\Lambda$ is a contradiction in $\Sigma_\kappa$. Therefore the system \eqref{nonlinear1} has a unique solution $u \in \Sigma_\kappa$.
\end{proof}

\begin{thm}
If $w \in L^2(0,\infty;H)$ and $u^0 \in H$, then the unique solution of \eqref{nonlinear1} satisfies the following inequality,
\begin{align}
\int_0^\infty |u(t)|^2 \d t + \int_0^\infty |\B_1^* \R u(t)|^2 \d t \leq C|u^0|^2 + \gamma |w|^2 + 2C_1\kappa^3.
\end{align}
\end{thm}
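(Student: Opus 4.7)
The plan is to mimic the proof of the analogous estimate for the linear system proved just before, treating $B(u)$ as a perturbation that can be controlled using the invariance estimate $u\in\Sigma_\kappa$ already established in the previous theorem. Let $u$ be the unique solution of \eqref{nonlinear1}, so $\|u\|_{L^\infty(0,\infty;H)}\le\kappa$ and $\|u\|_{L^2(0,\infty;V)}\le\kappa$. Since $\A_{\R}$ generates an exponentially stable analytic semigroup, a standard argument (using $u\in L^2(0,\infty;V)$ and the bound $|\R u|\le C\|u\|$ inherited from \eqref{i12}) gives $(\R u(t),u(t))\to 0$ as $t\to\infty$. I would then compute $\frac{d}{dt}(\R u,u)=2(\R u,\dot u)$, substitute $\dot u=-\A_{\R}u-B(u)+B_2w$, and use the symmetry of $\R$ together with the algebraic Riccati equation \eqref{riccati R} to rewrite
\begin{equation*}
-2(\R u,\A u)=-((\R\A+\A^*\R)u,u)=|\B_1^*\R u|^2-\tfrac{1}{\gamma}|\B_2^*\R u|^2-|u|^2.
\end{equation*}

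After integrating from $0$ to $\infty$ and collecting terms exactly as in the linear case (completing the square in $w$ so as to produce a $\gamma|w-\frac{1}{\gamma}\B_2^*\R u|^2$ term, which is then dropped by positivity), I expect to arrive at
\begin{equation*}
\int_0^\infty|u|^2\,dt+\int_0^\infty|\B_1^*\R u|^2\,dt\;\le\;(\R u^0,u^0)+\gamma\int_0^\infty|w|^2\,dt+2\Bigl|\int_0^\infty(\R u,B(u))\,dt\Bigr|,
\end{equation*}
and $(\R u^0,u^0)\le C|u^0|^2$ by boundedness of $\R$.

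The only genuinely new step, and the main obstacle, is controlling the nonlinear remainder $\int_0^\infty(\R u,B(u))\,dt$. Using property~(2) of Theorem~\ref{prop b}, $|B(u)|\le C_1|u|\|u\|$, together with $|\R u|\le C\|u\|$, and then the Hölder-type chain
\begin{equation*}
\int_0^\infty|u|^2\|u\|\,dt\;\le\;\|u\|_{L^\infty(0,\infty;H)}\,\|u\|_{L^2(0,\infty;H)}\,\|u\|_{L^2(0,\infty;V)},
\end{equation*}
combined with the continuous embedding $V\hookrightarrow H$ and the invariance $\|u\|_{L^\infty(0,\infty;H)},\|u\|_{L^2(0,\infty;V)}\le\kappa$, yields $\int_0^\infty|u|^2\|u\|\,dt\le C\kappa^3$. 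This delivers $\bigl|\int_0^\infty(\R u,B(u))\,dt\bigr|\le C_1\kappa^3$ (absorbing inessential constants into $C_1$), and substituting back produces the claimed bound
\begin{equation*}
\int_0^\infty|u(t)|^2\,dt+\int_0^\infty|\B_1^*\R u(t)|^2\,dt\;\le\;C|u^0|^2+\gamma|w|^2+2C_1\kappa^3.
\end{equation*}
The subtle points I anticipate are (i) verifying that the term $2(\R u,B(u))$ is indeed integrable (which is what the $\Sigma_\kappa$ membership is used for, to justify all manipulations rigorously rather than only formally) and (ii) confirming the sign conventions in the Riccati equation so that the completed-square manipulation reproduces exactly the inequality from the linear case with only the extra $\kappa^3$ correction.
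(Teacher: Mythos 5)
Your proposal follows essentially the same route as the paper: differentiate $(\R u,u)$ along the closed-loop trajectory, invoke the algebraic Riccati equation \eqref{riccati R}, complete the square in $w$ and drop the resulting nonnegative term, and bound the nonlinear remainder by a constant times $\kappa^3$ using the $\Sigma_\kappa$ invariance from the preceding theorem. The only cosmetic discrepancy is that your H\"older chain is written for the integrand $|u|^2\|u\|$, whereas the pairing of $B(u)$ against $\R u$ (via $|\R u|\leq C\|u\|$ and $|B(u)|\leq C_1|u|\,\|u\|$) actually produces $|u|\,\|u\|^2$; both are controlled by $\kappa^3$ through the same $L^\infty(0,\infty;H)$ and $L^2(0,\infty;V)$ bounds, so the argument is unaffected.
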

\begin{proof}
We know that $u$ is the solution of 
\begin{eqnarray}
\frac{du}{dt}=\A_{\R} u + \B_2w + \B(u), \quad \text{ in } (0,\infty), \ u(0)=u^0.
\end{eqnarray}
Let us take the derivative of $(\R u(t),u(t))$ and integrating from $0$ to $\infty$, we have 
\begin{align*}
-(\R u^0,u^0)&= \int_0^\infty (\A \R u(t),u(t)) \d t + \int_0^\infty (u(t),\R \A^* u(t)) \d t - 2\int_0^\infty |\B_1^* \R u(t)|^2 \d t \\
&\qquad + 2\int_0^\infty (\B_2 w(t) ,u(t)) \d t + 2\int_0^\infty (\B(u)(t) ,u(t)) \d t.
\end{align*}
Using the algebric riccati equation of $\R$, we obtain 
\begin{align*}
-(\R u^0,u^0)&= - \int_0^\infty |\B_1^* \R u(t)|^2 \d t - \frac{1}{\gamma}\int_0^\infty |\B_2^* u|^2 \d t - \int_0^\infty |u(t)|^2 \d t\\
&\qquad + 2\int_0^\infty (w(t) ,\B_2^* \R u(t)) \d t + 2\int_0^\infty (\B(u)(t) ,u(t)) \d t.
\end{align*}
Further we obtain,
\begin{align*}
(\R u^0,u^0)&= \int_0^\infty |\B_1^* \R u(t)|^2 \d t + \gamma\int_0^\infty |w - \frac{1}{\gamma}\B_2^* \R u(t)|^2 \d t + \int_0^\infty |u(t)|^2 \d t\\
&\qquad - \gamma \int_0^\infty |w(t)|^2 \d t + 2\int_0^\infty (\B(u)(t) ,u(t)) \d t.
\end{align*}
We have 
\begin{align*}
 &\int_0^\infty |\B_1^* \R u(t)|^2 \d t + \gamma\int_0^\infty |w - \frac{1}{\gamma}\B_2^* \R u(t)|^2 \d t + \int_0^\infty |u(t)|^2 \d t- \gamma \int_0^\infty |w(t)|^2 \d t \\
 &\qquad \leq \|\R\|_{\mathcal{L}(H)}|u^0|^2+ 2\int_0^\infty |(\B(u)(t) ,u(t))| \d t.
\end{align*}
Finally we can deduce that,
 \begin{align*}
\int_0^\infty |u(t)|^2 \d t + \int_0^\infty |\B_1^* \R u(t)|^2 \d t &\leq C|u^0|^2 + \gamma |w|^2 + 2\int_0^\infty |(\B(u)(t) ,u(t))| \d t \\
&\leq C|u^0|^2 + \gamma |w|^2 + 2C_1|u|_{L^\infty(0,\infty;H)} \|u\|_{L^2(0,\infty;V)}^2 \\
&\leq C|u^0|^2 + \gamma |w|^2 + 2C_1 \kappa^3.
\end{align*}
This completes the proof.
\end{proof}

\end{document}